\renewcommand*{\backref}[1]{}
\renewcommand*{\backrefalt}[4]{%
    \ifcase #1 (Not cited.)%
    \or        (Cited on page~#2.)%
    \else      (Cited on pages~#2.)%
    \fi}
\definecolor{dkgreen}{rgb}{0,0.6,0}
\definecolor{gray}{rgb}{0.5,0.5,0.5}
\definecolor{mauve}{rgb}{0.58,0,0.82}
\tiny\color{gray},
\def\@tocline#1#2#3#4#5#6#7{\relax
  \ifnum #1>\c@tocdepth 
  \else
    \par \addpenalty\@secpenalty\addvspace{#2}%
    \begingroup \hyphenpenalty\@M
    \@ifempty{#4}{%
      \@tempdima\csname r@tocindent\number#1\endcsname\relax
    }{%
      \@tempdima#4\relax
    }%
    \parindent\z@ \leftskip#3\relax \advance\leftskip\@tempdima\relax
    \rightskip\@pnumwidth plus4em \parfillskip-\@pnumwidth
    #5\leavevmode\hskip-\@tempdima
      \ifcase #1
       \or\or \hskip 1em \or \hskip 2em \else \hskip 3em \fi%
      #6\nobreak\relax
    \hfill\hbox to\@pnumwidth{\@tocpagenum{#7}}\par
    \nobreak
    \endgroup
  \fi}
\DeclareMathOperator{\Sym}{Sym}
\newtheorem{lemma}{Lemma}[section]
\newtheorem{corollary}[lemma]{Corollary}
\newtheorem{theorem}[lemma]{Theorem}
\newtheorem{prop}[lemma]{Proposition}
\theoremstyle{definition}
\newtheorem{definition}[lemma]{Definition}
\newtheorem{construction}[lemma]{Construction}
\newtheorem{remark}[lemma]{Remark}
\newtheorem{convention}[lemma]{Convention}
\newtheorem*{notation}{Notation}
\newcommand{\FF}{\mathbf{F}}
\newcommand{\Z}{\mathbf{Z}}
\newcommand{\QQ}{\mathbf{Q}}
\newcommand{\cc}{\mathbf{C}}
\newcommand{\M}{{M}}
\newcommand{\Mell}{\M_\mathrm{ell}}
\newcommand{\HP}{\mathbf{H}P}
\newcommand{\Sq}{\mathrm{Sq}}
\renewcommand{\H}{\mathrm{H}}
\newcommand{\Eoo}{{\mathbf{E}_\infty}}
\newcommand{\ol}[1]{\overline{#1}}
\newcommand{\wt}[1]{\widetilde{#1}}
\newcommand{\E}[1]{\mathbf{E}_{{#1}}}
\newcommand{\mmod}{/\!\!/}
\renewcommand{\S}{\mathbb{S}}
\newcommand{\GL}{\mathrm{GL}}
\renewcommand{\O}{\mathrm{O}}
\newcommand{\BO}{\mathrm{BO}}
\newcommand{\bo}{\mathrm{bo}}
\newcommand{\tmf}{\mathrm{tmf}}
\newcommand{\BP}[1]{\mathrm{BP}\langle{#1}\rangle}
\newcommand{\BPP}{\mathrm{BP}}
\renewcommand{\H}{\mathrm{H}}
\newcommand{\BSpin}{\mathrm{BSpin}}
\newcommand{\MSpin}{\mathrm{MSpin}}
\newcommand{\MString}{\mathrm{MString}}
\newcommand{\BString}{\mathrm{BString}}
\newcommand{\bstring}{\mathrm{bstring}}
\title{The Ando-Hopkins-Rezk orientation is surjective}
\author{Sanath Devalapurkar}
\email{sanathd@mit.edu}
\begin{document}

\maketitle

\begin{abstract}
    We show that the map $\pi_\ast \MString \to \pi_\ast \tmf$ induced by the
    Ando-Hopkins-Rezk orientation is surjective. This proves an unpublished
    claim of Hopkins and Mahowald. We do so by constructing an $\E{1}$-ring $B$
    and a map $B\to \MString$ such that the composite $B\to \MString\to \tmf$ is
    surjective on homotopy. Applications to differential topology, and in
    particular to Hirzebruch's prize question, are discussed.
\end{abstract}

\section{Introduction}

The goal of this paper is to show the following result.
\begin{theorem}\label{string-surj}
    The map $\pi_\ast \MString \to \pi_\ast \tmf$ induced by the
    Ando-Hopkins-Rezk orientation is surjective.
\end{theorem}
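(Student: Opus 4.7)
The plan, foreshadowed by the abstract, is to construct an $\E{1}$-ring $B$ together with an $\E{1}$-ring map $B \to \MString$ such that the composite $B \to \MString \to \tmf$ is surjective on homotopy. This immediately implies the theorem, since any homomorphism that factors through a surjection is itself surjective.

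The natural source of such a $B$ equipped with a map to $\MString$ is the Thom spectrum construction: given an $\E{1}$-map $f\colon Y \to \BString$ from a grouplike $\E{1}$-space $Y$, the Thom spectrum of $f$, call it $B$, is an $\E{1}$-ring, and the canonical map $B \to \MString$ induced by $f$ is an $\E{1}$-ring map. The composite $B \to \MString \to \tmf$ is then also an $\E{1}$-ring map, so surjectivity on $\pi_*$ reduces to hitting a set of ring generators of $\pi_*\tmf$. A convenient setup is to take $Y = \Omega X$ for a well-chosen pointed space $X$, in which case $f$ is determined by a map $X \to B\BString$.

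The crucial computational input is that the Ando-Hopkins-Rezk orientation is explicitly computable on Thom classes: the image in $\pi_*\tmf$ of the Thom class of a String bundle is controlled by its associated modular form via the $\sigma$-orientation refining the Witten genus. I would therefore design $X$ so that the Thom classes associated to its cells map in $\pi_*\tmf$ to a set of ring generators. Integrally, such a set must include the modular-form classes $c_4, c_6, \Delta$ together with the exotic torsion generators such as $\eta, \nu, \epsilon, \kappa, \bar\kappa$ at $p=2$ and $\alpha, \beta$ at $p=3$, in the notation of Bauer-Behrens-Mahowald-Rezk.

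The main obstacle will be realizing these exotic torsion classes of $\pi_*\tmf$, especially the 2-primary ones, as images of Thom classes of cells coming from a single grouplike $\E{1}$-space; the interplay between the multiplicative structure of $B$ and the relations among torsion generators in $\pi_*\tmf$ is delicate. I expect the construction of $X$ to proceed cell by cell, modelled on Hopkins-Mahowald-style Thom spectrum presentations (as in the description of $H\FF_p$ as the Thom spectrum of an $\E{2}$-map out of $\Omega^2 S^3$), and to be combined with explicit computations in $\pi_*\MString$ used to trace classes through the $\sigma$-orientation to $\tmf$. Once $B$ is constructed and the surjectivity of $\pi_*B \to \pi_*\tmf$ is verified on a generating set, the theorem follows.
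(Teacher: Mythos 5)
Your high-level architecture matches the paper exactly: build an $\E{1}$-ring $B$ as the Thom spectrum of a loop map into $\BString$, use the ring structure to reduce to hitting generators of $\pi_*\tmf$. (This is advertised in the abstract.) The gap is in the mechanism you propose for lifting the generators. You suggest tracing Thom classes through the $\sigma$-orientation and matching modular forms; this only sees Adams-Novikov filtration zero (i.e., the information visible in $\pi_*\tmf\otimes\QQ$ and its integral refinement among modular forms). It cannot detect the torsion classes, and it is precisely the limitation of the earlier Hopkins--Mahowald sketch the paper cites. You also misidentify how the torsion classes are handled: the spherical ones ($\eta,\nu,\epsilon,\kappa,\ol{\kappa},\alpha,\beta$) lift for free since the unit $\S\to\tmf$ factors through the $\E{1}$-ring $B$, and the nonspherical torsion-coefficient classes ($8\Delta$, $\Delta\eta$, $2\Delta\nu$, $\Delta\epsilon$, $\Delta^k\nu$, \dots; resp.\ $3\Delta$, $3\Delta^2$, $\Delta^3$, $b$ at $p=3$) are not Thom classes of cells but Toda brackets such as $\langle x,\nu,\ol{\kappa}\rangle$ or $\langle x,\alpha,\beta^2\rangle$.

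The paper's actual engine is therefore different from what you describe: construct $B$ concretely enough that $\BPP_*B\cong\BPP_*[b_4,y_6]$ (so $\Delta$ exists on the zero-line of the ANSS for $B$ and $B\to\tmf$ is an equivalence through dimension $12$), then prove the ANSS differentials $d_5(\Delta)=\ol{\kappa}\nu$ at $p=2$ and $d_5(\Delta)=\alpha\beta^2$ at $p=3$ occur in the ANSS for $B$ and that the corresponding products vanish in $\pi_*B$. That vanishing is what makes all the needed Toda brackets well-defined in $\pi_*B$, after which a unit/indeterminacy argument finishes. None of this is accessible from the Witten genus. Finally, you propose to combine this with "explicit computations in $\pi_*\MString$", but $\pi_*\MString$ is not known in the relevant range; avoiding it is precisely why the small spectrum $B$ is introduced.
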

This integral result was originally stated as \cite[Theorem 6.25]{hopkins-icm},
but, to the best of our knowledge, no proof has appeared in the literature. In
\cite{hopkins-mahowald-orientations}, Hopkins and Mahowald give a proof sketch
of Theorem \ref{string-surj} for elements of $\pi_\ast \tmf$ of Adams-Novikov
filtration $0$.

The analogue of Theorem \ref{string-surj} for $\bo$ (namely, the statement that
the map $\pi_\ast \MSpin\to \pi_\ast \bo$ induced by the Atiyah-Bott-Shapiro
orientation is surjective) is classical \cite{milnor-spin}. In Section
\ref{abs-surj}, we present (as a warmup) a proof of this surjectivity result for
$\bo$ via a technique which generalizes to prove Theorem \ref{string-surj}. We
construct an $\E{1}$-ring $A$ with an $\E{1}$-map $A\to \MSpin$. The
$\E{1}$-ring $A$ is a particular $\E{1}$-Thom spectrum whose mod $2$ homology is
given by the polynomial subalgebra $\FF_2[\zeta_1^4]$ of the mod $2$ dual
Steenrod algebra. The Atiyah-Bott-Shapiro orientation $\MSpin\to \bo$ is an
$\Eoo$-map, and so the composite $A\to \MSpin \to \bo$ is an $\E{1}$-map.  We
then prove that the map $\pi_\ast A\to \pi_\ast \bo$ is surjective; this is
stronger than the Atiyah-Bott-Shapiro orientation being surjective on homotopy.

The argument to prove Theorem \ref{string-surj} follows the same outline: we
construct (in Section \ref{B-def}) an $\E{1}$-ring $B$ with an $\E{1}$-map $B\to
\MString$. This $\E{1}$-ring $B$ is the height $2$ analogue of the $\E{1}$-ring
$A$; this motivated the naming of $B$. We define $B$ as a particular
$\E{1}$-Thom spectrum whose mod $2$ homology is given by the polynomial
subalgebra $\FF_2[\zeta_1^8, \zeta_2^4]$ of the mod $2$ dual Steenrod algebra.
The Ando-Hopkins-Rezk orientation \cite{koandtmf} $\MString \to \tmf$ is an
$\Eoo$-map, and so the composite $B\to \MString \to \tmf$ is an $\E{1}$-map. We
then prove the following stronger statement:
\begin{theorem}\label{main-thm}
    The map $\pi_\ast B \to \pi_\ast \tmf$ is surjective.
\end{theorem}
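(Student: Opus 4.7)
The plan follows the outline of Section \ref{abs-surj}: work prime by prime, combining an Adams spectral sequence computation with explicit identification of low-dimensional permanent cycles. At primes $p \geq 5$, $\pi_\ast \tmf_{(p)} \cong \Z_{(p)}[c_4, c_6]$ is polynomial, and the classes in $\pi_\ast B$ detecting $\zeta_1^8$ (in degree $8$) and $\zeta_2^4$ (in degree $12$) map to $c_4$ and $c_6$ up to units, yielding surjectivity immediately. At $p = 3$, the ring $\pi_\ast \tmf_{(3)}$ also contains $3$-primary torsion in the $\alpha$- and $\beta$-families; these can be hit by a short Adams-$E_2$ comparison exploiting the simple structure of $\H_\ast(B; \FF_3)$.

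The substantive work is at $p = 2$. There I would use the mod-$2$ Adams spectral sequence
\[
    E_2^{s, t} = \Ext^{s, t}_{A_\ast}(\FF_2, \H_\ast(B; \FF_2)) \Rightarrow \pi_{t-s}(B^\wedge_2).
\]
Since by construction $\H_\ast(B; \FF_2) = \FF_2[\zeta_1^8, \zeta_2^4]$ embeds as a sub-$A_\ast$-comodule algebra into $\H_\ast(\tmf; \FF_2) = \FF_2[\zeta_1^8, \zeta_2^4, \zeta_3^2, \zeta_4, \zeta_5, \ldots]$, a change-of-rings argument identifies both Adams $E_2$-pages: the target is the familiar $\Ext_{A(2)_\ast}(\FF_2, \FF_2)$, and the source is $\Ext$ over an explicit larger quotient Hopf algebra of $A$ (roughly the exterior algebra analogue obtained by killing only $\zeta_1^8$ and $\zeta_2^4$). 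A direct comparison should show that the induced map of Adams $E_2$-pages hits every class of $\tmf$'s $E_2$-page that survives to $E_\infty$.

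The principal obstacle is differential control: one must ensure that Adams differentials in the $B$-spectral sequence do not eliminate preimages of the classes that detect generators of $\pi_\ast \tmf_{(2)}$. I would handle this by first verifying explicitly that a small set of classes — detecting, e.g., $\eta$, $\nu$, $c_4$, $c_6$, $\epsilon$, $\kappa$, $\bar\kappa$, and $\Delta^8$ — are permanent cycles in the Adams SS for $B$. Then, using that $B \to \tmf$ is an $\E{1}$-map and that $\pi_\ast \tmf_{(2)}$ is finitely generated as a module over the polynomial subring generated by these low-dimensional classes, surjectivity propagates multiplicatively to all of $\pi_\ast \tmf$. The key point enabling the propagation is that multiplication in $\pi_\ast \tmf$ is compatible with the $\pi_\ast B$-module structure induced by the $\E{1}$-map, so once the generators are produced, their products automatically lift.
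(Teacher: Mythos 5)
Your prime-by-prime outline matches the paper's (invert $6$, then $p=3$, then $p=2$), and the $p\geq 5$ step is handled the same way (both exploit that $B\to\tmf$ is a $12$-equivalence). But there are genuine gaps at $p=3$ and especially $p=2$, and the central hard facts the paper isolates never appear in your argument.

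At $p=3$, "a short Adams-$E_2$ comparison" is not a proof. The actual work is to show $\alpha\beta^2=0$ in $\pi_\ast B_{(3)}$ and that $3\Delta$ is a permanent cycle in the Adams--Novikov spectral sequence for $B$ (Theorem \ref{d5-B}), and then to lift $3\Delta$, $3\Delta^2$, $\Delta^3$ and the torsion class $b$ via Toda brackets. Your proposal does not engage with any of this; at $p=3$, $\pi_\ast\tmf$ has torsion that cannot be obtained merely from the polynomial generators on the $0$-line.

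At $p=2$, your change-of-rings observation is correct: $\FF_2[\zeta_1^8,\zeta_2^4]$ is indeed $(A/\!/D)_\ast$ for the sub-Hopf algebra $D\subset A$ with profile function $(3,2,\infty,\infty,\ldots)$. But the argument then has three problems. First, "a direct comparison should show that the map of Adams $E_2$-pages hits every surviving class" is an unsubstantiated claim rather than an argument. Second, your proposed generating set $\{\eta,\nu,c_4,c_6,\epsilon,\kappa,\ol{\kappa},\Delta^8\}$ is far from a complete set of ring generators of $\pi_\ast\tmf_{(2)}$; you are missing the torsion powers of $\Delta$ ($8\Delta$, $4\Delta^2$, $2\Delta^4$, \dots), the classes $\Delta\eta$, $\Delta^2\nu$, $\Delta^4\eta$, etc., and the $c_4\Delta^k$, $2c_6\Delta^k$ (which require a separate argument via $T(2)$ and $\tmf_1(3)$). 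Third, and most importantly: "verify explicitly that \dots are permanent cycles" does not settle the issue, because the Adams spectral sequence for $B$ has potential hidden multiplicative extensions. The paper's Remark \ref{connections} explicitly identifies this as the gap in the Hopkins--Mahowald Adams-spectral-sequence sketch (which your proposal essentially reproduces): as Figure \ref{B-ass-2-E3} shows, there are classes in bidegrees $(23,6)$ and $(23,7)$ in the Adams $E_\infty$-page of $B$ that could a priori represent $\ol{\kappa}\nu$ via a hidden extension. The paper sidesteps this by working in the Adams--Novikov spectral sequence (Theorem \ref{d5-B-2}), where one can see there is nothing above filtration $5$ in stem $23$, and by importing the Toda-bracket identity $\ol{\kappa}\nu^2\in\langle\eta_4\sigma,\eta,2\rangle$ from \cite{hopkins-mahowald-eo2} to conclude $\ol{\kappa}\nu=0$ in $\pi_\ast B$ directly. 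That vanishing is the crux of the whole proof, and it is entirely absent from your proposal. Once it (and its $p=3$ analogue $\alpha\beta^2=0$) is established, the paper lifts the remaining generators as explicit Toda brackets (Propositions \ref{toda-1}--\ref{toda-3}), which produce actual elements of $\pi_\ast B$ rather than just permanent cycles that might represent the wrong homotopy class.
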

The map $B\to \tmf$ factors through $\MString$, so Theorem \ref{string-surj}
follows. In Section \ref{invert-2}, we prove Theorem \ref{main-thm} after
localizing at $3$ (as Theorem \ref{main-thm-invert-2}). In Section
\ref{prime-2}, we prove Theorem \ref{main-thm} after localizing at $2$ (as
Theorem \ref{main-thm-prime-2}); together, these yield Theorem \ref{string-surj}
by Corollary \ref{12-equiv}. Finally, in Section \ref{apps}, we study some
applications of Theorem \ref{string-surj}. In particular, we discuss
Hirzebruch's prize question \cite[Page 86]{hirzebruch} along the lines of
\cite[Corollary 6.26]{hopkins-icm}. We also prove a conjecture of Baker's from
\cite{baker-conjecture}.

The surjectivity of the Atiyah-Bott-Shapiro orientation $\MSpin\to \bo$ was
considerably strengthened by Anderson, Brown, and Peterson in \cite{abp}: they
showed that the Atiyah-Bott-Shapiro orientation $\MSpin\to \bo$ in fact admits a
spectrum-level splitting. It is a folklore conjecture that the same is true of
the Ando-Hopkins-Rezk orientation $\MString\to \tmf$, and there have been
multiple investigations in this direction (see, for instance,
\cite{laures-k1-local, laures-k2-local}). In forthcoming work \cite{bpn-thom},
we study in detail the relationship between $B$ and $\tmf$ (as well as $A$ and
$\bo$). We show that old conjectures of Cohen, Moore, Neisendorfer, Gray, and
Mahowald in unstable homotopy theory related to the Cohen-Moore-Neisendorfer
theorem, coupled with a conjecture about the centrality of a certain element of
$\sigma_2\in \pi_{13}(B)$ (resp. $\sigma_1\in \pi_5(A)$), implies that the
Ando-Hopkins-Rezk orientation (resp. the Atiyah-Bott-Shapiro orientation) admits
a spectrum level splitting. This provides another proof of Theorem
\ref{string-surj}, assuming the truth of these conjectures.

\subsection*{Acknowledgements}

I'm extremely grateful to Mark Behrens and Peter May for agreeing to work with
me this summer and for being fantastic advisors, as well as for arranging my
stay at UChicago. I'd like to also thank Stephan Stolz for useful conversations
when I visited Notre Dame. I'm also grateful to Andrew Baker, Hood Chatham,
Jeremy Hahn, Eleanor McSpirit, and Zhouli Xu for clarifying discussions. Thanks
also to Andrew Baker, Peter May, Haynes Miller, Zhouli Xu, and in particular
Jeremy Hahn for providing many helpful comments and correcting mistakes, and to
Andrew Senger for pointing out the reference
\cite{hopkins-mahowald-orientations} after this paper was written.

\section{Warmup: surjectivity of the Atiyah-Bott-Shapiro
orientation}\label{abs-surj}

The goal of this section is to provide a proof of the following classical
theorem using techniques which generalize to prove Theorem \ref{string-surj}.
\begin{theorem}\label{spin-surj}
    The map $\pi_\ast \MSpin \to \pi_\ast \bo$ induced by the
    Atiyah-Bott-Shapiro orientation is surjective.
\end{theorem}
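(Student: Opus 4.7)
The plan is to produce an $\E{1}$-ring $A$ with $H_\ast(A;\FF_2) \cong \FF_2[\zeta_1^4]$ sitting as a sub-comodule of the dual Steenrod algebra, together with an $\E{1}$-map $A \to \MSpin$, and then prove the sharper statement that the composite $\pi_\ast A \to \pi_\ast \MSpin \to \pi_\ast \bo$ is already surjective; this implies Theorem \ref{spin-surj}. For the construction, I would exploit the James identification of $\Omega S^5$ with the free $\E{1}$-space on $S^4$, so that an $\E{1}$-map $\Omega S^5 \to B\GL_1(\MSpin)$ is determined by a pointed map $S^4 \to B\GL_1(\MSpin)$. Choosing this pointed map to lift a suitable generator of $\pi_4 \BSpin \cong \Z$ produces the $\E{1}$-Thom spectrum $A$ together with an $\E{1}$-map $A \to \MSpin$ of Thom spectra, and a computation with the Thom isomorphism identifies the mod $2$ homology of $A$ with the polynomial subalgebra $\FF_2[\zeta_1^4] \subset \mathcal{A}_\ast$.

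Having built the map $A \to \bo$, I would prove surjectivity on homotopy prime by prime. Away from $2$, one has $\pi_\ast \bo[1/2] \cong \Z[1/2][\alpha]$ with $|\alpha|=4$, and $\alpha$ lies in the image of $\pi_4 \MSpin \to \pi_4 \bo$ by a classical calculation (for instance via the $\widehat{A}$-genus), so multiplicativity of $A \to \bo$ delivers surjectivity in all degrees away from $2$. At the prime $2$, I would run the mod $2$ Adams spectral sequence. The Adams $E_2$-page for $\bo$ is the classical $\Ext_{\mathcal{A}(1)}(\FF_2,\FF_2)$, generated in low filtration by $h_0$ and $h_1$ together with a class of higher filtration detecting the Bott element. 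Since $\FF_2[\zeta_1^4]$ is a sub-Hopf algebra of $\mathcal{A}_\ast$, Milnor-Moore freeness yields a change-of-rings expressing the Adams $E_2$-page for $A$ in terms of $\Ext$ over the associated quotient Hopf algebra, and a direct inspection shows each named generator of $\pi_\ast \bo$ is hit by a class in the image of the induced map on $E_2$-pages.

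The main obstacle is propagating surjectivity from $E_2$ to $E_\infty$: one must show that classes in the Adams $E_2$-page for $A$ mapping to Adams permanent cycles for $\bo$ are themselves permanent cycles in the Adams spectral sequence for $A$. Naturality of the Adams spectral sequence under the $\E{1}$-map $A \to \bo$ forces any nontrivial Adams differential in $A$ out of such a class to die in $\bo$; combined with the explicit structure of the $E_2$-page for $A$, which is small because $H_\ast(A;\FF_2)$ is polynomial on a single generator in degree $4$, and with degree constraints on potential differential targets, this should force the relevant differentials to vanish. The $\E{1}$-structure of $A$ is crucial for propagating surjectivity into all degrees via multiplication by a lift of the Bott class.
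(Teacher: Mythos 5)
Your construction of $A$ and the overall reduction---prove the stronger statement that $\pi_\ast A \to \pi_\ast \bo$ is onto, treat the odd-primary part by lifting the degree-$4$ polynomial generator and using that the map is multiplicative---are essentially the paper's. (One imprecision in the construction: the Thom spectrum should be taken over $B\GL_1(\S)$ via $\Omega S^5 \to \BSpin \to B\GL_1(\S)$, with the map $A\to\MSpin$ coming from the factorization through $\BSpin$; Thomifying a map into $B\GL_1(\MSpin)$ produces an $\MSpin$-module whose mod $2$ homology is $H_\ast(\MSpin;\FF_2)\otimes H_\ast(\Omega S^5;\FF_2)$, not $\FF_2[\zeta_1^4]$, and carries no evident map to $\MSpin$.) Where you genuinely diverge is the $2$-primary step: the paper lifts $2v_1^2$ and $v_1^4$ by exhibiting them as Toda brackets $\langle 8,\nu,1_\bo\rangle$ and $\langle 4,4\sigma,1_\bo\rangle$, which become defined in $\pi_\ast A$ because $\nu=0$ in $\pi_3 A$ (as $A\simeq \S\mmod\nu$) and $4\sigma=0$ in $\pi_7 A$ (the $8$-cell is attached by $\sigma+\wt{2\nu}$), and then removes the indeterminacy by the odd-multiple/$2$-local-unit observation.

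Your Adams spectral sequence route has genuine gaps at exactly the points where the paper inputs this structure. First, change of rings requires more than ``$\FF_2[\zeta_1^4]$ is a sub-Hopf algebra'': one needs $H_\ast(A;\FF_2)\cong \mathcal{A}_\ast\,\square_{\mathcal{A}_\ast/(\zeta_1^4)}\,\FF_2$, and the resulting $E_2$-page is $\Ext$ over the \emph{infinite-dimensional} sub-Hopf algebra of $\mathcal{A}$ dual to $\mathcal{A}_\ast/(\zeta_1^4)$, not over a small finite algebra like $\mathcal{A}(1)$; so the claim that ``direct inspection'' shows each generator of $\pi_\ast\bo$ is hit on $E_2$ is an assertion, not an argument. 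Degrees $\leq 4$ are easy (the map $A\to\bo$ is an equivalence there), so the whole content is the class in bidegree $(8,4)$ detecting $v_1^4$, and hitting it is precisely the nontrivial fact the paper extracts from the attaching map of the $8$-cell. Second, your naturality remark does not control differentials: if $d_r(x)=y\neq 0$ in the ASS for $A$ with $x$ mapping to a permanent cycle, naturality only says $y$ maps to zero in the ASS for $\bo$, which is no contradiction. What actually closes this step is the cell structure of $A$ (cells in dimensions $0,4,8,\dots$): the algebraic Atiyah--Hirzebruch bound shows stems $3$ and $7$ of the $E_2$-page for $A$ vanish above filtrations $3$ and $4$, so classes in $(4,3)$ and $(8,4)$ have no available targets---this needs to be proved, not left at ``should force.'' Finally, detection only produces lifts of the generators modulo higher Adams filtration; you still need the observation that the discrepancy is an even multiple, so that an odd multiple of each generator lifts and odd integers are $2$-local units---the same correction the paper performs for the bracket indeterminacy, and which your write-up omits.
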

As mentioned in the introduction, we prove Theorem \ref{spin-surj} by
constructing an $\E{1}$-ring $A$ with an $\E{1}$-map $A\to \MSpin$. Composing
with the Atiyah-Bott-Shapiro orientation $\MSpin\to \bo$ produces an $\E{1}$-map
$A\to \MSpin \to \bo$. We then show the following result, which implies Theorem
\ref{spin-surj}. 
\begin{theorem}\label{A-surj}
    The map $\pi_\ast A\to \pi_\ast \bo$ is surjective.
\end{theorem}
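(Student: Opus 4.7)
The plan is to verify surjectivity prime by prime, handling odd primes by a short direct argument and $p=2$ via the mod-$2$ Adams spectral sequence.

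At an odd prime $p$, $\pi_\ast \bo_{(p)}$ is a polynomial ring $\Z_{(p)}[v]$ on a generator $v \in \pi_4$ concentrated in degrees divisible by $4$, so it suffices to exhibit a single class in $\pi_4 A_{(p)}$ mapping to a generator. Because $H_4(A;\FF_2)$ is one-dimensional (spanned by $\zeta_1^4$) and $A$ is a Thom spectrum with a $4$-cell attached to the unit cell, the bottom-cell map $\S^4 \to A$ is available, and its composite $\S^4 \to A \to \bo$ can be checked to be a generator of $\pi_4 \bo_{(p)}$ by a mod-$p$ Hurewicz computation, or by comparison with the complexification map to $\bu$.

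At $p=2$ I would run the mod-$2$ Adams spectral sequence for $A$ and produce Adams-detected lifts of the generators $\eta \in \pi_1 \bo$, $\alpha \in \pi_4 \bo$, and the Bott class $\beta \in \pi_8 \bo$ of $\pi_\ast \bo_2^\wedge$. The key computational input is that $\FF_2[\zeta_1^4]$ is a normal sub-Hopf algebra of the (commutative) dual Steenrod algebra $\mathcal{A}_\ast$, so Cartan--Eilenberg change of rings identifies
\[
E_2(A) = \Ext_{\mathcal{A}_\ast}(\FF_2,\, \FF_2[\zeta_1^4]) \cong \Ext_{\mathcal{A}_\ast /\!\!/ \FF_2[\zeta_1^4]}(\FF_2, \FF_2).
\]
Inside this $E_2$-page I would find explicit representatives mapping to the Adams classes for $\eta$, $\alpha$, and $\beta$ inside $E_2(\bo) = \Ext_{A(1)_\ast}(\FF_2,\FF_2)$. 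The class $\eta$ (and hence $\eta^2$) comes tautologically from the unit $\S \to A$. For $\alpha$ and $\beta$ I would use the $\E{1}$-Thom structure together with the $4$- and $8$-cells of $A$ (detected by $\zeta_1^4$ and $\zeta_1^8$) to construct maps $\S^4 \to A$ and $\S^8 \to A$ whose images in $\bo$ are the required generators.

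The main obstacle is verifying that the candidate lifts survive as Adams permanent cycles and land on actual generators of $\pi_\ast \bo$, rather than on multiples like $2\alpha$ or $4\beta$ which would cut out a proper subgroup. Naturality of Adams differentials with respect to $A \to \bo$ combined with the collapse of the Adams SS for $\bo$ at $E_2$ forces any putative differential on a lift in $E_r(A)$ to hit a class mapping to zero in $\bo$; a bidegree and sparsity check in the small $\Ext$ over $\mathcal{A}_\ast /\!\!/ \FF_2[\zeta_1^4]$ should then eliminate such possibilities in the relevant range. The normalization of $\alpha$ is pinned by the odd-primary argument above, and the relation $\alpha^2 = 4\beta$ in $\pi_\ast \bo$ together with the $\E{1}$-multiplicativity of $A \to \bo$ fixes $\beta$ up to a $2$-local unit. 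Once $\eta, \alpha, \beta \in \pi_\ast A$ have been produced, every monomial $\eta^i \alpha^j \beta^k$ is hit in $\pi_\ast \bo$ by multiplicativity, and since these generate $\pi_\ast \bo$ both $2$-locally and rationally, the integral surjectivity of $\pi_\ast A \to \pi_\ast \bo$ follows by assembling the two primes.
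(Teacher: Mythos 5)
Your overall structure — reduce to lifting ring generators prime by prime, exploiting the $\E{1}$-ring structure of $A\to\bo$ — is the same as the paper's, and the odd-primary half is fine: Remark \ref{A-equiv} (equivalence through dimension $4$) gives the lift of the degree-$4$ generator after inverting $2$, essentially as you describe. But the $2$-local degree-$8$ step has a genuine gap. The $8$-cell of $A_{(2)}$ is attached to the $4$-skeleton $C\nu$ by $\sigma + \wt{2\nu}$, which is not null, so there is no map $S^8\to A$ hitting the top cell in the way you gesture at. The only $\E{1}$-theoretic class in $\pi_8 A$ you can manufacture from the lower data is $\alpha^2$, and $\alpha^2 = 4v_1^4$; the relation $\alpha^2 = 4\beta$ does not "fix $\beta$" — it produces a class four times too big, and there is nothing in your argument that divides by $4$. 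This is precisely the content of the theorem: you need to show $v_1^4$ itself, not $4v_1^4$, lifts, and this is where your proposal is circular.

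Your Adams spectral sequence route via $\Ext_{\mathcal{A}_\ast/\!\!/\FF_2[\zeta_1^4]}(\FF_2,\FF_2)$ could in principle be made to work (this is essentially how Mahowald did such computations for $A = X_5$), but you never locate the required permanent cycle in stem $8$, Adams filtration $4$, mapping to the Bott class in $\Ext_{A(1)_\ast}(\FF_2,\FF_2)$, nor rule out that the map lands only in the image of $2$. The paper sidesteps all this with a Toda-bracket argument: by Baker–May, $v_1^4 \in \langle 16, \sigma, 1_{\bo}\rangle \subseteq \langle 4, 4\sigma, 1_{\bo}\rangle$, and the explicit attaching map $\sigma + \wt{2\nu}$ of the $8$-cell forces $4\sigma = 0$ in $\pi_\ast A_{(2)}$, so the bracket $\langle 4, 4\sigma, 1_A\rangle$ is well-defined in $\pi_8 A_{(2)}$ and maps to $v_1^4$ up to controllable indeterminacy ($0 \pmod 2$), which a unit-adjustment then absorbs. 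That attaching-map input is the key missing idea in your proposal. For degree $4$ the paper similarly uses $2v_1^2\in\langle 8,\nu,1_{\bo}\rangle$ together with $\nu = 0$ in $A$, though there the equivalence through dimension $4$ already suffices as you observe.
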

\begin{remark}
    Theorem \ref{A-surj} is an old result of Mahowald's: it appears, for
    instance, as \cite[Proposition 4.1(c)]{mahowald-some-etaj} and
    \cite[Proposition 2.2(3)]{hopkins-mahowald-orientations}. Theorem
    \ref{A-surj} also implies the second part of \cite[Proposition
    4.10]{baker-characteristics}.
\end{remark}
The definition of the $\E{1}$-ring $A$ is as follows.
\begin{construction}
    Let $S^4\to \BSpin$ be a generator of $\pi_4 \BSpin \cong \Z$. Since
    $\BSpin$ is an infinite loop space, there is an induced map $\Omega S^5\to
    \BSpin$. Let $A$ denote the Thom spectrum of this map. This is an
    $\E{1}$-ring with an $\E{1}$-map $A\to \MSpin$. Its $15$-skeleton is shown
    in Figure \ref{A-15-skeleton}.
\end{construction}
\begin{remark}\label{univ-property}
    The image of a generator of $\pi_4 \BSpin$ under the J-homomorphism
    $\BSpin\to B\GL_1(\S)$ is the Hopf element $\nu\in \pi_4 B\GL_1(\S) \cong
    \pi_3 \S$. Consequently, $A$ is the Thom spectrum of the map $\Omega S^5\to
    B\GL_1(\S)$ which detects $\nu$ on the bottom cell $S^4$ of the source. In
    particular, the universal property of Thom spectra from \cite{barthel-thom}
    exhibits $A$ as the $\E{1}$-quotient $\S\mmod\nu$ of the sphere spectrum by
    $\nu$.
\end{remark}
\begin{remark}
    The spectrum $A$ is ubiquitous in Mahowald's older works
    \cite{mahowald-thom, mahowald-bo-res, mahowald-imj} (where it is often
    denoted $X_5$), where its relationship to $\bo$ via the composite $A\to
    \MSpin\to \bo$ is utilized to great effect.
\end{remark}
\begin{prop}\label{A-homology}
    The $\BPP_\ast$-algebra $\BPP_\ast(A)$ is isomorphic to a polynomial algebra
    $\BPP_\ast[y_2]$, where $|y_2| = 4$. There is a map $A_{(p)}\to \BPP$. On
    $\BPP$-homology, the element $y_2$ maps to $t_1^2$ mod decomposables at
    $p=2$.
\end{prop}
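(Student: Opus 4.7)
The plan is to trivialize the Thom bundle defining $A$ after base-change to $\BPP$, then transport the homology computation from the known integral homology of $\Omega S^5$. By Remark \ref{univ-property}, the $\E{1}$-map $\Omega S^5 \to B\GL_1(\S)$ is determined by its restriction to the bottom cell $S^4$, which detects $\nu \in \pi_3 \S$. Since $\pi_3 \BPP = 0$, this restriction becomes null after composition with the unit map $B\GL_1(\S) \to B\GL_1(\BPP)$. Because $\Omega S^5 \simeq \Omega \Sigma S^4$ is the free $\E{1}$-space on $S^4$ (James's theorem), the pointed null-homotopy on $S^4$ promotes to an $\E{1}$-null-homotopy of the composite $\Omega S^5 \to B\GL_1(\BPP)$. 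I expect this step, which invokes the universal property of $\E{1}$-Thom spectra from \cite{barthel-thom}, to be the main technical input.

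This $\E{1}$-null-homotopy has two consequences. First, by the universal property of $\E{1}$-Thom spectra, it determines an $\E{1}$-ring map $A_{(p)} \to \BPP$. Second, it produces a Thom equivalence $\BPP \wedge A_{(p)} \simeq \BPP \wedge (\Omega S^5)_+$ of $\E{1}$-$\BPP$-algebras. Since $H_\ast(\Omega S^5; \Z) \cong \Z[x_4]$ is torsion-free and concentrated in even degrees (by Bott-Samelson for $\Omega\Sigma S^4$), the Atiyah-Hirzebruch spectral sequence computing $\BPP_\ast(\Omega S^5)$ collapses on parity grounds, giving $\BPP_\ast(\Omega S^5) \cong \BPP_\ast[x_4]$. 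Hence $\BPP_\ast(A) \cong \BPP_\ast[y_2]$ as a $\BPP_\ast$-algebra, with $|y_2| = 4$.

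For the image of $y_2$ under the induced map $\BPP_\ast(A) \to \BPP_\ast(\BPP)$, I would reduce mod $p = 2$. Running the same trivialization argument with $\BPP$ replaced by $H\FF_2$ identifies the mod $2$ Hurewicz image of $y_2$ with $\zeta_1^4$ in the dual Steenrod algebra $\cA_\ast = \FF_2[\zeta_1, \zeta_2, \ldots]$ (this recovers the identification $H_\ast(A; \FF_2) \cong \FF_2[\zeta_1^4]$ recorded in the introduction). The reduction $\BPP \to H\FF_2$ induces the standard Hopf-algebroid map $\BPP_\ast \BPP \to \cA_\ast$ sending $v_i \mapsto 0$ and $t_i \mapsto \zeta_i^2$, so in particular $t_1^2 \mapsto \zeta_1^4$. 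Comparing, the image of $y_2$ in $\BPP_\ast(\BPP)$ must agree with $t_1^2$ modulo the ideal $(v_1, v_2, \ldots) \cdot \BPP_\ast(\BPP)$, which is the intended sense of ``mod decomposables''.
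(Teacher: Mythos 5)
Your proof is correct and closely parallels the paper's for the first two sentences, but takes a genuinely different route for the last sentence. For the computation of $\BPP_\ast(A)$, the paper simply notes that $\Omega S^5$ (hence $A$) has cells only in dimensions divisible by $4$, so the Atiyah--Hirzebruch spectral sequence collapses; your trivialization-of-the-Thom-bundle approach is essentially equivalent, though it handles the multiplicative structure slightly more cleanly since the Thom equivalence $\BPP \wedge A_{(p)} \simeq \BPP \wedge (\Omega S^5)_+$ is one of $\E1$-$\BPP$-algebras. The construction of the $\E1$-map $A_{(p)} \to \BPP$ via the universal property of Thom spectra is exactly the paper's argument. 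For the final claim, the paper reads off the answer directly from the fact that $\nu$ is detected by $[t_1^2]$ in the $2$-local Adams--Novikov spectral sequence of the sphere (with cobar representative $t_1^2 + v_1 t_1$), whereas you reduce along $\BPP \to \H\FF_2$ and compare with $\zeta_1^4$. Both work: the paper's version is more direct and actually pins down the image modulo $(v_1)$, while yours establishes it modulo $(2, v_1, v_2, \ldots)$, which is the coarser statement but still suffices for ``mod decomposables.''

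One step deserves a caveat: you assert that ``running the same trivialization argument with $\BPP$ replaced by $\H\FF_2$ identifies the mod $2$ Hurewicz image of $y_2$ with $\zeta_1^4$.'' The trivialization by itself only gives $\H_\ast(A;\FF_2) \cong \FF_2[y_2]$ \emph{abstractly}; it does not automatically identify the image of $y_2$ under the map $\H_\ast(A;\FF_2) \to \cA_\ast$ induced by the Thom class $A \to \H\FF_2$. A priori the image could land on $\zeta_1\zeta_2$ or $\zeta_1^4 + \zeta_1\zeta_2$, the other degree-$4$ elements of $\cA_\ast$. Pinning it down to $\zeta_1^4$ requires tracking the attaching map $\nu$ of the $4$-cell (equivalently, the $\Sq^4$ in $\H^\ast(C\nu;\FF_2)$) or analyzing the $\cA_\ast$-comodule structure. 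This is the classical identification due to Mahowald and recorded in the paper's introduction, so your citation covers it, but the phrasing suggests the trivialization alone does more work than it can.
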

\begin{proof}
    The space $\Omega S^5$ has cells only in dimensions divisible by $4$, and
    hence the same is true of $A$. The Atiyah-Hirzebruch spectral sequence for
    $\BPP_\ast(A)$ therefore collapses at the $E^2$-page, and so $\BPP_\ast(A)
    \cong \BPP_\ast[y_2]$, as desired. Since $\pi_\ast \BPP$ is concentrated in
    even degrees, the element $\nu$ vanishes in $\pi_3 \BPP$. The universal
    property of $A$ from Remark \ref{univ-property} therefore produces an
    $\E{1}$-map $A\to \BPP$. The element $\nu$ is detected by $[t_1^2]$ in the
    $2$-local Adams-Novikov spectral sequence for the sphere (in fact, a choice
    of representative in the cobar complex is $t_1^2 + v_1 t_1$), so this yields
    the final sentence of the proposition.
\end{proof}
\begin{remark}\label{A-equiv}
    In particular, the map $A\to \bo$ is an equivalence in
    dimensions $\leq 4$.
    Proposition \ref{A-homology} implies that $\H_\ast(A; \FF_2) \cong
    \FF_2[\zeta_1^4]$; note that this is the $Q_0$-Margolis homology of
    $\H_\ast(\bo;\FF_2) \cong \FF_2[\zeta_1^4, \zeta_2^2, \zeta_3, \cdots]$.
    This is sharp: $\pi_5 A$ contains a nontrivial element $\sigma_1$ which maps
    to zero in $\pi_\ast \bo$. This element is specified up to indeterminacy by
    the relation $\eta\nu = 0$; see Figure \ref{A-15-skeleton}.
\end{remark}
\begin{figure}
    \begin{tikzpicture}[scale=0.75]
        \draw [fill] (0, 0) circle [radius=0.05];
        \draw [fill] (1, 0) circle [radius=0.05];
        \draw [fill] (1, 1) circle [radius=0.05];
        \draw [fill] (2, 0) circle [radius=0.05];
        \draw [fill] (3, 0) circle [radius=0.05];

        \draw (0,0) to node[below] {\footnotesize{$\nu$}} (1,0);
        \draw (1,0) to node[below] {\footnotesize{$2\nu$}} (2,0);
        \draw (2,0) to node[below] {\footnotesize{$3\nu$}} (3,0);

        \draw [->] (1,1) to node[left] {\footnotesize{$\eta$}} (1,0);

        \draw (0,0) to[out=-90,in=-90] node[below] {\footnotesize{$\sigma$}}
        (2,0);
    \end{tikzpicture}
    \caption{The $15$-skeleton of $A$ at the prime $2$ shown horizontally, with
    $0$-cell on the left. The element $\sigma_1$ is depicted.}
    \label{A-15-skeleton}
\end{figure}
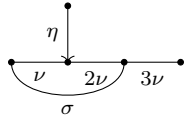
We will momentarily prove Theorem \ref{A-surj}. Before doing so, we need to
introduce one piece of notation.
\begin{notation}
    Let $M$ be a unital spectrum, and suppose $\alpha,\beta\in \pi_\ast \S$ and
    $\gamma\in \pi_\ast M$ are elements such that $\alpha\beta = 0$ and $\beta
    \gamma = 0$ in $M$. Following \cite[Section 7]{baker-may}, the Toda bracket
    $\langle \alpha, \beta, \gamma \rangle$ will denote the coset of elements of
    $\pi_{|\alpha| + |\beta| + |\gamma| + 1}(M)$ determined by the subgroup
    $\mathrm{indet} = \alpha \pi_{|\beta| + |\gamma| + 1}(M) + \pi_{|\alpha| +
    |\beta| + 1}(\S) \gamma$. The subgroup $\mathrm{indet}$ is the indeterminacy
    of the bracket $\langle \alpha, \beta, \gamma\rangle$.  There is an
    analogous definition for higher-fold Toda brackets, but we will not
    elaborate more on this, since we shall not need it. We will use this
    notation throughout without further comment.
\end{notation}
\begin{proof}[Proof of Theorem \ref{A-surj}]
    Because the map $A\to \bo$ is one of $\E{1}$-rings, it suffices to lift all
    the generators of $\pi_\ast \bo$ to $\pi_\ast A$. We first prove Theorem
    \ref{A-surj} after inverting $2$. Since $\pi_\ast \bo[1/2] \cong
    \Z[1/2][u^2]$, where $u^2$ is the square of the Bott element. It follows
    from Remark \ref{A-equiv} that the polynomial generator $u^2$ of $\pi_\ast
    \bo[1/2]$ lifts to $\pi_\ast A[1/2]$.

    It remains to prove Theorem \ref{A-surj} after $2$-localization. Recall that
    $\pi_\ast \bo$ is polynomially generated by $\eta$ in degree $1$ (which is
    spherical), $2v_1^2$ in degree $4$, and $v_1^4$ in degree $8$ (subject to
    some relations). The element $\eta\in \pi_1 \bo_{(2)}$ lifts to $\pi_1
    A_{(2)}$ because it is a spherical element and the unit $\S\to \bo_{(2)}$
    factors through $A_{(2)}$. It remains to lift the other two generators.
    Remark \ref{A-equiv} already shows that $2v_1^2$ lifts to $\pi_\ast
    A_{(2)}$. Alternatively, recall that there is a sole $d_3$-differential
    $d_3(v_1^2) = \eta^3$ in the Adams-Novikov spectral sequence for $\bo$. This
    implies that $2v_1^2\in \langle 8, \nu, 1_\bo\rangle$, with indeterminacy
    $0\pmod{2}$. Since $\nu$ vanishes in $\pi_\ast A$, we find that the bracket
    $\langle 8, \nu, 1_A\rangle$ is well-defined in $\pi_4 A_{(2)}$.

    For $v_1^4$, recall that $\sigma = 0$ in $\pi_\ast \bo$, and that $v_1^4\in
    \langle 16, \sigma, 1_\bo\rangle \subseteq \pi_\ast \bo$ (see \cite[Lemma
    7.3]{baker-may}), where the indeterminacy in this bracket is $0\pmod{2}$.
    Note that $\langle 16, \sigma, 1_\bo\rangle \subseteq \langle 4, 4\sigma,
    1_\bo\rangle$. We now observe that the attaching map of the $8$-cell of
    $A_{(2)}$ is given by $\sigma + \wt{2\nu}$, where $\wt{2\nu} \in
    \pi_7(C\nu)$ is an element determined by the relation $2\nu^2 = 0\in
    \pi_6(\S)$; see, for instance, \cite[Lemma 4.7]{baker-characteristics}. In
    particular, this implies that $4\sigma = 0$ in $\pi_\ast A_{(2)}$, so the
    bracket $\langle 4, 4\sigma, 1_A\rangle$ is well-defined.

    It follows from the above discussion that $2v_1^2$ and $v_1^4$ lift to
    $\pi_\ast A_{(2)}$ up to indeterminacy (and the indeterminacy in $\pi_\ast
    \bo$ is $0\pmod{2}$). If $2v_1^2 + 4nv_1^2 = 2(2n+1) v_1^2$ lifts to
    $\pi_\ast A_{(2)}$ for some $n\in \Z_{(2)}$, then so does $2v_1^2$ since
    $2n+1$ is a $2$-local unit.  Arguing similarly for $v_1^4$, it follows that
    the other two generators of $\pi_\ast \bo_{(2)}$ lift to $\pi_\ast A_{(2)}$,
    as desired.
\end{proof}

\section{Defining $B$}\label{B-def}

In this section, we will define the $\E{1}$-ring $B$ mentioned in the
introduction and study some of its elementary properties. It is the height $2$
analogue of the spectrum $A$ from Section \ref{abs-surj}. We define $B$ as a
Thom spectrum whose mod $2$ homology is given by $\FF_2[\zeta_1^8, \zeta_2^4]$;
notice that this is the $Q_0$-Margolis homology of the mod $2$ homology of
$\tmf$. The spectrum $B$ appeared under the name $\ol{X}$ in \cite[Section
10]{hopkins-mahowald-orientations}. We will work \emph{integrally} (i.e.,
without inverting any primes) unless explicitly mentioned otherwise.

\begin{construction}\label{B-constr}
    There is a fiber sequence
    $$S^9 = \O(10)/\O(9)\to \BO(9) \to \BO(10).$$
    There is an element $f\in \pi_{12} \O(10) \cong \Z/12$, which is sent to
    $2\nu\in\pi_{12}(S^9) \cong \Z/24$ under the boundary homomorphism in the
    long exact sequence on homotopy. Define a space $BN$ as the homotopy
    pullback
    $$\xymatrix{
	S^9 \ar[r] \ar@{=}[d] & BN\ar[r] \ar[d] & S^{13}\ar[d]^-f\\
	S^9 \ar[r] & \BO(9) \ar[r] & \BO(10).
    }$$
    Let $N$ be the loop space $\Omega BN$. If $S^9\to \mathrm{B^2 String}$
    denotes the generator of $\pi_8 \BString$, then the composite $S^{12}
    \xrightarrow{2\nu} S^9\to \mathrm{B^2 String}$ is null. The
    Atiyah-Hirzebruch-Serre spectral sequence shows that the generator of
    $\bstring^1(S^9)$ extends to $\bstring^1(BN)$, and so there is a map $BN\to
    \mathrm{B^2 String}$. The induced loop map $N\to \BString$ is given by the
    map of fiber sequences
    \begin{equation}\label{fiber-sequence-map}
	\xymatrix{
	    N \ar[r] \ar[d] & \Omega S^{13} \ar[r] \ar[d] & S^9 \ar[d]\\
	    \BString \ar[r] & \ast \ar[r] & \mathrm{B^2 String}.
	    }
    \end{equation}
    The Thom spectrum of the map $N\to \BString$ is the $\E{1}$-ring $B$.
\end{construction}

Note that $B$ is defined integrally, and that it admits an $\E{1}$-map $B\to
\MString$ obtained by Thomifying the map $N\to \BString$.

\begin{prop}\label{bp-homology}
    The $\BPP_\ast$-algebra $\BPP_\ast(B)$ is isomorphic to a polynomial algebra
    $\BPP_\ast[b_4, y_{6}]$, where $|b_4| = 8$ and $|y_6| = 12$. There is a map
    $B_{(p)}\to \BPP$. On $\BPP$-homology, the elements $b_4$ and $y_{6}$ map to
    $t_1^4$ and $t_2^2$ mod decomposables at $p=2$, and $y_{6}$ maps to $t_1^3$
    mod decomposables at $p=3$.
\end{prop}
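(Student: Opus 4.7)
The plan mirrors the proof of Proposition~\ref{A-homology}. First, I would analyze the cell structure of $N$ from Construction~\ref{B-constr}: looping $S^9 \to BN \to S^{13}$ yields a fiber sequence $\Omega S^9 \to N \to \Omega S^{13}$, whose Serre spectral sequence has $E^2$-page $\H_\ast(\Omega S^{13};\H_\ast(\Omega S^9;\Z)) \cong \Z[x_{12}]\otimes \Z[x_8]$ concentrated in bidegrees of even total degree, so all differentials vanish for parity reasons. Hence $\H_\ast(N;\Z)$ is free abelian with one generator in each degree $8i+12j$, $i,j\geq 0$. Since $N\to\BString$ is $\BPP$-orientable, the Atiyah--Hirzebruch spectral sequence $\H_\ast(N;\BPP_\ast) \Rightarrow \BPP_\ast(B)$ then also collapses at $E^2$, and the $\E{1}$-ring structure on $B$ makes it multiplicative. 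Lifting cell generators to $b_4\in\BPP_8(B)$ and $y_6\in\BPP_{12}(B)$ gives a $\BPP_\ast$-algebra map $\BPP_\ast[b_4,y_6]\to \BPP_\ast(B)$ which is surjective onto the AHSS-associated graded; a Poincar\'e series comparison upgrades this to an isomorphism, establishing the polynomial structure.

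To produce the map $B_{(p)}\to \BPP$, I would take the composite of the $\E{1}$-map $B \to \MString$ from Construction~\ref{B-constr} with the canonical $\Eoo$-map $\MString\to \MU$ (obtained by Thomifying $\BString\to \BU$) and the Quillen idempotent $\MU_{(p)}\to \BPP$. The result is an $\E{1}$-ring map $B_{(p)}\to \BPP$.

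Finally, the images of $b_4$ and $y_6$ would be identified by attaching-map analysis exactly as in Proposition~\ref{A-homology}. The $8$-cell of $B$ attaches to the unit via the J-image of a generator of $\pi_8\BString \cong \Z$, which (up to a unit) is $\sigma \in \pi_7\S$; in the $2$-local $\BPP$-ANSS for the sphere, $\sigma$ is detected by a cobar representative with leading term $t_1^4$, so extending the inclusion $\S\cup_\sigma e^8 \hookrightarrow B \to \BPP$ forces $b_4 \mapsto t_1^4$ modulo decomposables. The attaching map of the $12$-cell, which one traces through the pullback square defining $BN$ using the relation $\partial f = 2\nu$ of Construction~\ref{B-constr}, yields an element of $\pi_{11}\S$ detected by $[t_2^2]$ at $p=2$ and by $[t_1^3]$ at $p=3$, accounting for the image of $y_6$. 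I expect the main obstacle to be this last step: carefully identifying the $12$-cell attaching map and its precise cobar representative at each prime, since the $8$-cell analysis is completely parallel to the $\nu$-case of Proposition~\ref{A-homology}.
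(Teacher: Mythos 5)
Your first step (polynomial structure of $\BPP_\ast(B)$) agrees in substance with the paper's argument: both reduce to the even-cell structure of $N$ coming from the fiber sequence $\Omega S^9\to N\to\Omega S^{13}$ and collapse of the Atiyah--Hirzebruch spectral sequence. For the map $B_{(p)}\to\BPP$ you take a genuinely different route: you compose $B\to\MString\to\MU$ with the Quillen idempotent, whereas the paper invokes the universal property of Thom spectra from \cite{barthel-thom}, factoring $B$ as an iterated Thom spectrum $\S\mmod\sigma$ followed by a quotient by an element $\wt\nu$, and then observing that $\sigma$ and $\wt\nu$ both vanish in the even ring $\BPP$. Both constructions are fine, but the paper's factorization is doing real work: it is what lets one \emph{name} the attaching maps that control the images of $b_4$ and $y_6$, which is exactly where your argument breaks down.

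The gap is in your last paragraph. The attaching map of the $12$-cell of $B$ is not an element of $\pi_{11}\S$. Since $B$ is the Thom spectrum of a map $\Omega S^{13}\to B\GL_1(\S\mmod\sigma)$, the $12$-cell is attached to the $8$-skeleton $C\sigma$, and the attaching map is the element $\wt\nu\in\pi_{11}(\S\mmod\sigma)$ obtained by lifting $\nu\in\pi_3\S$ to the top cell of $C\sigma$ using the relation $\sigma\nu=0$. In particular, the cobar representatives $[t_2^2]$ at $p=2$ and $[t_1^3]$ at $p=3$ are classes in the Adams--Novikov $E_2$ of $\S\mmod\sigma$, not of the sphere; indeed $\pi_{11}\S$ does not contain an element with these descriptions, and there is no element of $\pi_{11}\S$ whose image under the unit would account for $y_6\mapsto t_2^2$. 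Your suggested strategy of ``tracing through the pullback square defining $BN$ using $\partial f=2\nu$'' does not produce the needed relative class either; the relevant input is the Thom-spectrum factorization plus the spherical relation $\sigma\nu=0$, which your argument never invokes.
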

\begin{proof}
    There is a fiber sequence
    \begin{equation}\label{fiber-sequence}
	\Omega S^9\to N\to \Omega S^{13}.
    \end{equation}
    The J-homomorphism $\BString\to B\GL_1(\S)$ gives a map $N\to \BString\to
    B\GL_1(\S)$. The composite with the map $\Omega S^9\to N$ gives a map
    $\Omega S^9\to B\GL_1(\S)$. This is the extension of the map $S^8\to
    B\GL_1(\S)$ detecting $\sigma\in \pi_7(\S)$ along $S^8\to \Omega S^9$. By
    one of the main theorems of \cite{barthel-thom}, we find that the Thom
    spectrum of the map $\Omega S^9\to B\GL_1(\S)$ is the $\E{1}$-quotient
    $\S\mmod\sigma$ of the sphere spectrum by $\sigma$.
    
    The fiber sequence \eqref{fiber-sequence} exhibits $B$ as the Thom spectrum
    of a map $\Omega S^{13}\to B\GL_1(\S\mmod\sigma)$. The induced map
    $S^{12}\to B\GL_1(\S\mmod\sigma)$ detects an element $\wt{\nu}\in
    \pi_{11}(\S\mmod\sigma)$. This element may be described as follows. The
    relation $\sigma\nu = 0$ in $\pi_\ast \S$ defines a lift of
    $\nu\in\pi_3(\S)$ to $\pi_{11}$ of the $15$-skeleton $C\sigma$ of
    $\S\mmod\sigma$; this is the element $\wt{\nu}$. Since $\BPP$ is
    concentrated in even degrees, the element $\sigma$ vanishes in $\pi_\ast
    \BPP$. Consequently, $\wt{\nu}$ is well-defined, and it, too vanishes in
    $\pi_\ast \BPP$. The universal property of Thom spectra from
    \cite{barthel-thom} then supplies an $\E{1}$-map $B\to \BPP$.

    In particular, the Thom isomorphism says that the $\BPP$-homology of $B$ is
    abstractly isomorphic as an algebra to the $\BPP$-homology of $N$. This may
    in turn be computed by the Atiyah-Hirzebruch spectral sequence. However, the
    fiber sequence \eqref{fiber-sequence} implies that the homology of $N$ is
    concentrated in even degrees. Since $\pi_\ast \BPP$ is also concentrated
    in even degrees, this implies that the Atiyah-Hirzebruch spectral sequence
    calculating $\BPP_\ast(B)$ collapses, and we find that $\BPP_\ast(B) \cong
    \BPP_\ast[b_4, y_6]$, as desired.

    The map $B\to \BPP$ induces a map $\BPP_\ast(B)\to \BPP_\ast(\BPP) \cong
    \BPP_\ast[t_1, t_2, \cdots]$. The element $\wt{\nu}$ is detected by
    $[t_2^2]$ in the $2$-local Adams-Novikov spectral sequence for
    $\S\mmod\sigma$, and by $[t_1^3]$ in the $3$-local Adams-Novikov spectral
    sequence for $\S\mmod\sigma$. The element $\sigma\in \pi_7(\S)$ is detected
    by $[t_1^4]$ in the $2$-local Adams-Novikov spectral sequence for the
    sphere. This yields the final sentence of the proposition.
\end{proof}
\begin{remark}
    Proposition \ref{bp-homology} implies that the mod $2$ homology of $B$ is
    isomorphic to $\FF_2[\zeta_1^8, \zeta_2^4]$; note that this is the
    $Q_0$-Margolis homology of $\H_\ast(\tmf;\FF_2) \cong \FF_2[\zeta_1^8,
    \zeta_2^4, \zeta_3^2, \zeta_4, \cdots]$.
\end{remark}
\begin{remark}\label{same-12}
    The composite $B\to \MString\to \tmf$ is an $\E{1}$-ring map (since the
    first map is an $\E{1}$-ring map by construction, and the second is an
    $\Eoo$-ring map by \cite{koandtmf}), and it is an equivalence in dimensions
    $\leq 12$. This follows from Proposition \ref{bp-homology}.
\end{remark}
\begin{prop}
    The map $B\to \tmf$ induces a surjection on homotopy after inverting $6$.
\end{prop}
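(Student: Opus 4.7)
The plan is to reduce the surjectivity statement to lifting the two polynomial generators of $\pi_\ast \tmf[1/6]$, and then invoke Remark \ref{same-12}.

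First, I would recall that after inverting $6$, the homotopy ring $\pi_\ast \tmf[1/6]$ is the polynomial algebra $\Z[1/6][c_4, c_6]$, with $|c_4|=8$ and $|c_6|=12$. This is the classical description of modular forms over $\Z[1/6]$ and will give us only two generators to lift, both sitting in low degree.

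Next, since the composite $B\to \MString\to \tmf$ is an $\E{1}$-ring map (as stated in Remark \ref{same-12}), the image of $\pi_\ast B[1/6]\to \pi_\ast \tmf[1/6]$ is a subring of $\Z[1/6][c_4,c_6]$. Consequently, to prove surjectivity it suffices to lift $c_4$ and $c_6$ individually.

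By Remark \ref{same-12}, the map $B\to \tmf$ is an equivalence in dimensions $\leq 12$. In particular the map $\pi_8 B\to \pi_8 \tmf$ and $\pi_{12} B\to \pi_{12} \tmf$ are isomorphisms, so after inverting $6$ the classes $c_4\in \pi_8 \tmf[1/6]$ and $c_6\in \pi_{12}\tmf[1/6]$ admit lifts $\wt{c}_4\in \pi_8 B[1/6]$ and $\wt{c}_6\in \pi_{12} B[1/6]$. The subring of $\pi_\ast B[1/6]$ generated by $\wt{c}_4$ and $\wt{c}_6$ then maps onto all of $\pi_\ast \tmf[1/6]$, completing the proof. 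There is no real obstacle here; the work has already been done in Remark \ref{same-12}, and the proof really amounts to citing the structure of $\pi_\ast \tmf[1/6]$ together with that equivalence through dimension $12$.
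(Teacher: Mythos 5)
Your proof is correct and essentially identical to the paper's: reduce to lifting the polynomial generators $c_4$ and $c_6$ of $\pi_\ast\tmf[1/6]$ using the ring-map structure, then cite Remark \ref{same-12} for the lifts in dimensions $8$ and $12$. No meaningful differences.
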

\begin{proof}
    By \cite[Proposition 4.4]{bauer-tmf}, $\pi_\ast \tmf[1/6]$ is a polynomial
    generator on two generators $c_4$ and $c_6$, in degrees $8$ and $12$,
    respectively. Since the map $B[1/6]\to \tmf[1/6]$ is an $\E{1}$-map, the
    map $\pi_\ast B[1/6]\to \pi_\ast \tmf[1/6]$ is a ring map. It therefore
    suffices to lift the elements $c_4$ and $c_6$ to $\pi_\ast B[1/6]$. This
    follows from Remark \ref{same-12}.
\end{proof}
As an immediate consequence, we have:
\begin{corollary}\label{12-equiv}
    If the maps $\pi_\ast B_{(3)}\to \pi_\ast \tmf_{(3)}$ and $\pi_\ast
    B_{(2)}\to \pi_\ast \tmf_{(2)}$ are surjective, then Theorem \ref{main-thm}
    is true.
\end{corollary}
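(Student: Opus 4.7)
The plan is to invoke the local-to-global principle for surjectivity of maps of abelian groups: a homomorphism $f \colon M \to N$ is surjective if and only if $\coker(f)_{(p)} \cong \coker(f_{(p)})$ vanishes for every prime $p$. Applied degreewise to the ring map $\pi_\ast B \to \pi_\ast \tmf$, this reduces Theorem \ref{main-thm} to verifying that $\pi_\ast B_{(p)} \to \pi_\ast \tmf_{(p)}$ is surjective at every prime $p$.

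First I would dispense with the primes outside $\{2,3\}$ using the preceding proposition, which supplies surjectivity of $\pi_\ast B[1/6] \to \pi_\ast \tmf[1/6]$. For any prime $p \neq 2, 3$, further localizing at $p$ is an exact functor from $\Z[1/6]$-modules to $\Z_{(p)}$-modules and thus preserves the surjection, yielding surjectivity of $\pi_\ast B_{(p)} \to \pi_\ast \tmf_{(p)}$ automatically. Combining this observation with the two hypothesized surjections at $(2)$ and $(3)$ then accounts for every prime, and the local-to-global criterion recovers global surjectivity, which is precisely Theorem \ref{main-thm}.

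There is no genuine technical obstacle within this corollary; its purpose is organizational, splitting the remaining work into the two independent prime-by-prime statements Theorem \ref{main-thm-invert-2} and Theorem \ref{main-thm-prime-2} proved in Sections \ref{invert-2} and \ref{prime-2}. The real difficulty, of course, lies in establishing those two surjectivity statements, where the relevant generators of $\pi_\ast \tmf_{(p)}$ must be lifted explicitly along $B_{(p)} \to \tmf_{(p)}$; the present corollary is merely the arithmetic gluing step.
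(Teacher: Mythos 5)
Your proposal is correct and is exactly the reasoning the paper leaves implicit in calling the corollary an ``immediate consequence'' of the preceding proposition: surjectivity after inverting $6$ handles all primes other than $2$ and $3$, and the two hypothesized local surjections cover the remaining primes, so the arithmetic fracture/local-to-global principle for cokernels gives integral surjectivity.
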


\begin{remark}\label{B-wood}
    In \cite{bpn-thom}, we show that $B$ is in many ways analogous to $\tmf$.
    For instance, it satisfies an analogue of the $2$-local Wood equivalence
    $\tmf_{(2)} \wedge DA_1 \simeq \tmf_1(3)_{(2)}$ from \cite{homologytmf},
    where $DA_1$ is a certain $8$-cell complex: the spectrum $B_{(2)} \wedge
    DA_1$ is a summand of Ravenel's Thom spectrum $X(4)_{(2)}$. (More precisely,
    it is the summand $T(2)$ of $X(4)_{(2)}$ obtained from the Quillen
    idempotent, as studied in \cite[Chapter 6.5]{green}.)
\end{remark}

\section{Theorem \ref{main-thm} after localizing at $3$}\label{invert-2}

In Corollary \ref{12-equiv}, we reduced Theorem \ref{main-thm} to showing that
the maps $\pi_\ast B_{(3)}\to \pi_\ast \tmf_{(3)}$ and $\pi_\ast B_{(2)}\to
\pi_\ast \tmf_{(2)}$ are surjective. Our goal in this section is to study the
$3$-local case. We shall prove:
\begin{theorem}\label{main-thm-invert-2}
    The map $\pi_\ast B_{(3)} \to \pi_\ast \tmf_{(3)}$ is surjective on
    homotopy.
\end{theorem}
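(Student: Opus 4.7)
The strategy parallels the proof of Theorem \ref{A-surj}: since $B_{(3)} \to \tmf_{(3)}$ is an $\E{1}$-ring map, proving surjectivity on homotopy reduces to exhibiting lifts for a ring-theoretic generating set of $\pi_\ast \tmf_{(3)}$. Bauer's computation in \cite{bauer-tmf} presents $\pi_\ast\tmf_{(3)}$ as generated by $\alpha_1 \in \pi_3$, $c_4 \in \pi_8$, $\beta_1 \in \pi_{10}$, $c_6 \in \pi_{12}$, together with further classes $3\Delta \in \pi_{24}$, $3\Delta^2 \in \pi_{48}$, and $\Delta^3 \in \pi_{72}$ needed to generate the $3$-local $\Delta$-periodicity (the class $\Delta$ itself is not a permanent cycle in the Adams--Novikov spectral sequence). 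By Remark \ref{same-12}, the four generators of degree $\leq 12$ lift automatically to $\pi_\ast B_{(3)}$; the content of the theorem is therefore to lift the three higher-degree classes.

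For each of $3\Delta$, $3\Delta^2$, $\Delta^3$, the plan is to express the class as a Toda bracket (or higher-order Massey product) built from lower-degree classes, following the template of the $2v_1^2$ and $v_1^4$ arguments at the end of the proof of Theorem \ref{A-surj}. In the Adams--Novikov spectral sequence for $\tmf_{(3)}$ there are well-known differentials $d_5(\Delta) = \alpha_1\beta_1^2$ and $d_9(\alpha_1\Delta^2) = \beta_1^5$, and applying Moss's convergence theorem produces bracket expressions such as $3\Delta \in \langle 3, \alpha_1, \beta_1^2\rangle \subseteq \pi_{24}\tmf_{(3)}$, with analogous (possibly fourfold) brackets representing $3\Delta^2$ and $\Delta^3$. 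Provided the corresponding products $\alpha_1\beta_1^2$, $\beta_1^5$, etc.\ already vanish in $\pi_\ast B_{(3)}$, these Toda brackets are defined in $B_{(3)}$, map by naturality to the corresponding brackets in $\tmf_{(3)}$, and can be adjusted by $3$-local unit multiples, exactly as in the proof of Theorem \ref{A-surj}, to yield precise lifts.

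The principal obstacle will be establishing those vanishing relations in $\pi_\ast B_{(3)}$. I would do this by running the Adams--Novikov spectral sequence for $B_{(3)}$: its $E_2$-page is, by Proposition \ref{bp-homology}, given by $\Ext^{\ast,\ast}_{\BPP_\ast\BPP}(\BPP_\ast, \BPP_\ast[b_4, y_6])$, and the $\E{1}$-ring map $B_{(3)} \to \tmf_{(3)}$---sending $b_4, y_6$ to $3$-local unit multiples of $c_4, c_6$---is compatible with ANSS differentials. Naturality then forces differentials on the polynomial classes $b_4, y_6$ (and their products) whose targets represent $\alpha_1\beta_1^2$, $\beta_1^5$, and so on. The delicate part is identifying the precise targets, ensuring that each induced $B$-differential lands on exactly the desired class rather than on some other class of the same Adams--Novikov filtration; this requires a careful comparison of the $E_2$-pages of the two spectral sequences.
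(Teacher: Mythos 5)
Your high-level strategy matches the paper's: reduce to lifting a generating set of $\pi_\ast\tmf_{(3)}$, observe that the low-degree generators lift by Remark \ref{same-12}, express the $\Delta$-power generators as Toda brackets arising from the $d_5$ on $\Delta$ in the Adams--Novikov spectral sequence, verify these brackets are defined in $\pi_\ast B_{(3)}$, and absorb the indeterminacy using $3$-local units. However, there are several concrete gaps.

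First, your generating set is incomplete: you omit the generator $b = \langle\beta^2,\alpha,\alpha\rangle$ in degree $27$, which is not a product of the other generators. It does lift easily once one knows $\alpha\beta^2 = 0$ in $\pi_\ast B$ (together with the spherical relation $\alpha^2 = 0$), but it must be accounted for.

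Second, your guess at the bracket expressions for $3\Delta^2$ and $\Delta^3$ --- "possibly fourfold" brackets together with a $d_9$-differential $d_9(\alpha_1\Delta^2)=\beta_1^5$ --- heads toward an unnecessary detour. The paper's route uses only the single $d_5$ and $\Delta$-linearity: $d_5(\Delta^2)=2\Delta\alpha\beta^2$ and $d_5(\Delta^3)=3\Delta^2\alpha\beta^2$, producing the \emph{threefold} brackets
$3\Delta^2 \in \langle 3, \Delta\alpha, \beta^2\rangle$ and $\Delta^3 \in \langle 3, \Delta^2\alpha, \beta^2\rangle$.
All three brackets then hinge on the single vanishing statement $\alpha\beta^2 = 0$ in $\pi_\ast B$ (together with $3\alpha=0$ in the sphere), rather than a growing list of relations like $\beta_1^5=0$.

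Third, and most substantively, you flag the "delicate part" but leave it undone. Two concrete computations are needed. (i) One must verify that $\Delta$ actually lifts to the $0$-line of the $E_2$-page of the ANSS for $B$; the paper does this by identifying the elliptic curve Hopf algebroid $3$-locally with $(\Z[1/2][a_2,a_4],\ldots)$, noting the induced map sends $b_4\mapsto b_4$, $y_6\mapsto a_2 b_4$ modulo decomposables, and observing that $\Delta = a_2^2 b_4^2 - 16 b_4^3$ is then visibly a polynomial in $b_4, y_6$. Your remark that $b_4, y_6$ map to ``$3$-local unit multiples of $c_4, c_6$'' is not accurate ($c_4 = 4v_1^2 - 24 b_4$ etc.), and without this step the $d_5$ on $\Delta$ in the ANSS for $B$ is not even defined. (ii) One must show $\alpha\beta^2 = 0$ in $\pi_\ast B$, not merely that it dies on some page; the paper does this via the cell structure of the $20$-skeleton of $B$ and the algebraic Atiyah--Hirzebruch spectral sequence, noting that there is nothing in stem $23$ above ANSS filtration $5$, so the forced $d_5(\Delta)=\alpha\beta^2$ kills it with no room for a multiplicative extension. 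Your proposal identifies this as the obstacle but does not supply the argument.
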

\begin{convention}\label{3-localize}
    We shall localize at the prime $3$ for the remainder of this section.
\end{convention}

\subsection{The Adams-Novikov spectral sequence for $\tmf$}\label{anss-tmf}

In this section, we review the Adams-Novikov spectral sequence for $\tmf$ at
$p=3$; as mentioned in Convention \ref{3-localize}, we shall $3$-localize
everywhere. The following result is well-known, and is proved in
\cite{bauer-tmf}:
\begin{theorem}
    The $E_2$-page of the descent spectral sequence (isomorphic to the
    Adams-Novikov spectral sequence) for $\tmf$ is
    $$\H^\ast(\Mell;\omega^{\otimes 2\ast}) \cong \Z_3[\alpha, \beta, c_4, c_6,
    \Delta^{\pm 1}]/I,$$
    where $I$ is the ideal generated by the relations
    $$3\alpha = 3\beta = 0, \ \alpha^2 = 0, \ \alpha c_4 = \beta c_4 = \alpha
    c_6 = \beta c_6 = 0, \ c_4^3 - c_6^3 = 1728 \Delta.$$
    Moreover, $\alpha$ and $\beta$ are in the image of the map of spectral
    sequences from the Adams-Novikov spectral sequence of the sphere to that of
    $\tmf$, with preimages $\alpha_1$ and $\beta_1$.
\end{theorem}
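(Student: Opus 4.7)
The plan is to compute $\H^\ast(\Mell_{(3)}; \omega^{\otimes 2\ast})$ by flat descent from an explicit affine presentation of $\Mell_{(3)}$. Since $2$ is a unit at the prime $3$, every elliptic curve is locally presented by a Weierstrass equation $y^2 = x^3 + a_2 x^2 + a_4 x + a_6$, so $\Mell_{(3)}$ is the stack quotient of $\spec A$ by the groupoid of allowed coordinate changes $x\mapsto \lambda^2 x + r$, $y\mapsto \lambda^3 y$, where $A = \Z_{(3)}[a_2, a_4, a_6]$ and $\Gamma = A[r, \lambda^{\pm 1}]$. The cohomology we want is then the cohomology of the cobar complex of the Hopf algebroid $(A, \Gamma)$, regraded so that the weight $2n$ part computes $\H^\ast(\Mell_{(3)}; \omega^{\otimes 2n})$. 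The classes $c_4, c_6, \Delta$ are the standard invariant Weierstrass polynomials in the $a_i$, and they satisfy the identity $c_4^3 - c_6^2 = 1728\Delta$ in $A$; they survive to $\Gamma$-invariants and give the image of $\Z_{(3)}[c_4, c_6, \Delta]$ in $\H^0$.

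The next step is to separate the computation into the ordinary locus $\{c_4 \neq 0\}$ and the supersingular locus $\{c_4 = 0\}$, using an open cover of $\Mell_{(3)}$. Over the ordinary locus the groupoid $(A, \Gamma)$ becomes split by choosing a canonical Weierstrass form, so higher cohomology vanishes there and one reads off $\H^0[c_4^{-1}] \cong \Z_{(3)}[c_4^{\pm 1}, c_6, \Delta^{\pm 1}]/(c_4^3 - c_6^2 - 1728\Delta)$ in appropriate weights. The $3$-local torsion therefore comes from the formal neighborhood of the supersingular locus, which at $p=3$ is the single point $j=0$. There the formal deformation ring is $\Z_3\llbracket c_4\rrbracket$ and the automorphism group of the unique supersingular curve acts through its $3$-Sylow subgroup $\Z/3$; a direct group cohomology computation (or equivalently a reduction of the Morava stabilizer group $\mathbf{S}_2$ at $p=3$ to its maximal finite subgroup) produces two torsion classes, a class $\alpha\in \H^1$ of weight $2$ and a class $\beta\in \H^2$ of weight $6$, each annihilated by $3$, with the cup product $\alpha^2 = 0$ forced by graded-commutativity. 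These classes are supported only at the supersingular point, so they are killed by $c_4$ and by $c_6$, yielding the remaining relations of $I$.

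To identify $\alpha$ and $\beta$ as images of the Hopf classes $\alpha_1\in \pi_3\S_{(3)}$ and $\beta_1\in \pi_{10}\S_{(3)}$, I would use naturality of the Adams-Novikov spectral sequence. The unit map $\S\to \tmf$ induces a map of spectral sequences whose $E_2$-page is the map of $\Ext$-groups induced by the morphism $\Mell\to \M_{FG}$ classifying the formal group of the universal elliptic curve. The chromatic classes $\alpha_1, \beta_1$ lie in the image of $\H^\ast(\M_{FG}; \omega^{\otimes \ast})$ and are detected at height $2$, i.e., precisely at the supersingular locus where the $3$-torsion of $\H^\ast(\Mell; \omega^{\otimes 2\ast})$ is supported. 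Comparing degrees and weights shows that their images must generate the one-dimensional $\FF_3$-vector spaces spanned by $\alpha$ and $\beta$.

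The main obstacle is the supersingular local computation, in particular the control of $\H^2$: one must verify that the only $3$-torsion in $\H^2(\Mell_{(3)}; \omega^{\otimes 2\ast})$ is the $c_6$-free $\FF_3[\Delta^{\pm 1}]$-line spanned by $\beta$, with no hidden multiples of $c_4$ or $c_6$ and no additional generators. I would handle this by either (i) comparing the local computation to the explicit cohomology of $\Z/3$ acting on the Lubin-Tate ring $E_2$ at $p=3$ (a classical computation), or (ii) using the fact that the structure sheaf of the supersingular stratum is a skyscraper $\FF_9$-module on which the automorphism group acts with known cohomology, and assembling the answer via the local cohomology long exact sequence relating $\Mell_{(3)}$, the ordinary locus, and the supersingular point.
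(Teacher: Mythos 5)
Your route is genuinely different from the one the paper relies on: the paper does not prove this theorem but cites Bauer, whose argument is a direct algebraic computation with the Weierstrass (cubic-curve) Hopf algebroid --- at $p=3$ one reduces, exactly as in the paper's proof of Proposition \ref{delta-lift}, to the small presentation $(\Z_{(3)}[a_2,a_4],\,A'[r]/(r^3+a_2r^2+a_4r))$ and computes the cobar cohomology directly, with no geometric decomposition of $\Mell$. Your plan --- split off the ordinary locus and import the torsion from the automorphism group at the supersingular point via Lubin--Tate theory --- is the Hopkins--Miller/$\EO_2$-style approach; it is viable and more conceptual about where the torsion comes from, but it trades Bauer's self-contained cobar computation for a gluing problem, and as written that gluing and the local computation are where the gaps are. (Minor point: the relation should read $c_4^3-c_6^2=1728\Delta$, as you in fact use; the cube on $c_6$ in the statement is a typo.)

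Concretely: (i) The ordinary/supersingular decomposition is not an open cover --- $\{c_4=0\}$ is closed --- so the argument must run through the local-cohomology (or arithmetic/chromatic fracture) sequence you only gesture at in the last paragraph; moreover the closed substack $\{c_4=0\}$ over $\Z_{(3)}$ is not just the characteristic-$3$ supersingular point: in characteristic $0$ the curves with $j=0$ have automorphism group $\mu_6\supset\mu_3$, so the torsion-supporting locus is larger than your local model, and ``the single point $j=0$'' with deformation ring $\Z_3[[c_4]]$ is not correct as stated (the deformation ring is the Lubin--Tate ring $W(\FF_9)[[u_1]]$, $c_4$ cuts out the point with multiplicity, and the group acting is the full order-$12$ automorphism group together with $\Gal(\FF_9/\FF_3)$, not just its $3$-Sylow --- one must take invariants for the prime-to-$3$ part). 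Establishing that this local datum produces exactly $\alpha\in\H^1$, $\beta\in\H^2$ with $\beta$-periodicity, the precise relations $\alpha c_4=\beta c_4=\alpha c_6=\beta c_6=0$, and \emph{no} further torsion in any cohomological degree is the heart of the theorem, and it is deferred to ``a direct group cohomology computation'' rather than carried out. (ii) The identification of $\alpha,\beta$ with the images of $\alpha_1,\beta_1$ does not follow from comparing degrees and weights: that only shows the images lie in $\FF_3\{\alpha\}$ and $\FF_3\{\beta\}$, not that they are nonzero. For $\alpha_1$ one can check directly that it maps to the Bockstein of the Hasse invariant; for $\beta_1$ one needs a genuine argument, e.g.\ matching the Massey product $\beta_1\in\langle\alpha_1,\alpha_1,\alpha_1\rangle$ against $\langle\alpha,\alpha,\alpha\rangle=\pm\beta$ in $\H^\ast(\Mell;\omega^{\otimes 2\ast})$, or invoking the Hopkins--Miller detection of $\beta_1$ in $\EO_2$; some such step is missing.
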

The differentials are all deduced from Toda's relation $\alpha_1 \beta_1^3 = 0$
in $\pi_\ast \S$. There is a $d_5$-differential $d_5(\beta_{3/3}) = \alpha_1
\beta_1^3$ (the ``Toda differential''), where $\beta_{3/3}$ lives in bidegree
$(t-s,s) = (34,2)$; see, e.g., \cite[Theorem 4.4.22]{green}. Under the
$\Eoo$-ring map $\S\to \tmf$, this pushes forward to the same differential in
the Adams-Novikov spectral sequence for $\tmf$. Then:
\begin{lemma}\label{beta-3-3}
    There is a relation $\beta_{3/3} = \Delta \beta$ in the $E_2$-page of the
    Adams-Novikov spectral sequence for $\tmf$.
\end{lemma}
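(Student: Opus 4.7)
The strategy is to pin down $\beta_{3/3}$ in the $E_2$-page by a bidegree count, using the multiplicative relations stated in the theorem above. The element $\beta_{3/3}$ sits in Adams-Novikov bidegree $(t-s,s) = (34,2)$. First I would enumerate the monomials in $\alpha, \beta, c_4, c_6, \Delta^{\pm 1}$ that can possibly land in this bidegree, subject to the listed relations, so that the image of $\beta_{3/3}$ under $\S\to \tmf$ can be pinned down up to a scalar.

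Since $\alpha^2 = 0$, any class of filtration $s = 2$ must be a $\Z_3$-multiple of $\beta$ times a polynomial in $c_4, c_6, \Delta^{\pm 1}$ (potentially also $\alpha\beta$ times such a polynomial, but that class has filtration $3$ and would produce odd total degree after multiplication by anything of positive internal degree, which rules it out here). The remaining $t-s = 24$ must be accounted for by a polynomial of internal degree $24$ in $c_4$ (degree $8$), $c_6$ (degree $12$), and $\Delta^{\pm 1}$ (degree $24$). The relations $c_4 \beta = c_6 \beta = 0$ annihilate every such monomial except $\Delta\cdot\beta$. Hence the $E_2$-page in bidegree $(34,2)$ is cyclic, generated by $\Delta\beta$, and is killed by $3$ since $3\beta = 0$.

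Therefore the image of $\beta_{3/3}$ in this group is either $0$ or an $\FF_3^\times$-multiple of $\Delta\beta$. To rule out the first possibility, I would invoke the Toda differential $d_5(\beta_{3/3}) = \alpha_1 \beta_1^3$ in the ANSS for the sphere: it pushes forward to a nonzero $d_5$-differential hitting $\alpha\beta^3$ in the ANSS for $\tmf$, since the computation of $\pi_\ast \tmf_{(3)}$ in \cite{bauer-tmf} shows that $\alpha\beta^3$ does not survive to $E_\infty$. Nonvanishing of the target forces nonvanishing of the source, so $\beta_{3/3} = u\cdot\Delta\beta$ for some $u \in \FF_3^\times$. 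Since $\beta_{3/3}$ is only defined up to such a scalar anyway, we may absorb $u$ into its normalization and obtain $\beta_{3/3} = \Delta\beta$.

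The main obstacle is not any deep calculation but the bookkeeping in the first step: writing out the surviving monomials in bidegree $(34,2)$ and being careful that no exotic combination involving $\alpha$ sneaks in. Once that is settled, nonvanishing of $\Delta\beta$ in the $E_2$-page and the pushforward of the Toda differential do the rest.
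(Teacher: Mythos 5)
Your approach is genuinely different from the paper's and is essentially sound. The paper multiplies by $\alpha$ (an isomorphism in stem $34$ for both spectral sequences) to reduce to the known fact, cited from \cite{tmf}, that $\Delta\alpha\beta$ detects $\alpha_1\beta_{3/3}$ in $\pi_\ast\tmf$; you instead do a bidegree count in the explicit presentation of the $E_2$-page and then use the pushforward of the Toda $d_5$-differential to rule out vanishing. Your bidegree count is correct: since $\alpha^2 = 0$, filtration $2$ in stem $34$ is spanned by $\beta$ times a degree-$24$ monomial in $c_4, c_6, \Delta^{\pm 1}$, and the relations $c_4\beta = c_6\beta = 0$ kill everything except $\Delta\beta$; so the group is $\FF_3\{\Delta\beta\}$. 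Both proofs rely on a citation for the decisive fact (you cite the Toda differential and Bauer's computation; the paper cites the homotopy detection from \cite{tmf}), so neither is more self-contained.

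Two caveats worth tightening. First, for your nonvanishing step it is not quite enough that $\alpha\beta^3$ fails to survive to $E_\infty$: you need $\alpha\beta^3$ to be nonzero on $E_5$ so that naturality of the $d_5$-differential gives a contradiction. This does hold because the ANSS for $\tmf$ at $p=3$ has no differentials before $d_5$ (which follows from sparseness of the $E_2$-page), but you should say this explicitly rather than appeal to $E_\infty$. Second, the closing remark that ``$\beta_{3/3}$ is only defined up to such a scalar anyway'' is not accurate: $\beta_{3/3}$ has a specific definition via the chromatic construction and is not intrinsically ambiguous. What is true (and is all the lemma is used for) is that the identification up to a unit in $\FF_3^\times$ suffices to deduce the $d_5$-differential $d_5(\Delta) = \alpha\beta^2$; the paper itself only asserts agreement ``up to a unit multiple.'' Rephrase the final sentence to say that the unit ambiguity is harmless for the intended application rather than that it can be normalized away.
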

\begin{proof}
    We explain how to deduce this from the literature. Multiplication by
    $\alpha$ is an isomorphism in the Adams-Novikov spectral sequence for both
    the sphere and $\tmf$ in stem $34$, so it suffices to check that $\alpha
    \beta_{3/3} = \Delta \alpha\beta$. The class $\alpha_1 \beta_{3/3}$
    (resp. $\Delta \alpha \beta$) is a permanent cycle in the Adams-Novikov
    spectral sequence of the sphere (resp. $\tmf$) by the discussion on
    \cite[Page 137]{green}. It is known (see \cite[Chapter 13, page 12]{tmf})
    that $\Delta \alpha \beta$ detects $\alpha_1 \beta_{3/3}$ in homotopy. To
    conclude that they are the same on the $E_2$-page of the Adams-Novikov
    spectral sequence for $\tmf$, it suffices to note that $\alpha_1
    \beta_{3/3}$ maps to (a unit multiple of) $\Delta \alpha\beta$, as desired.
\end{proof}
It follows by naturality that there is a $d_5$-differential $d_5(\Delta \beta) =
\alpha \beta^3$, which gives (by $\beta$-linearity):
\begin{prop}\label{d5-tmf-3}
    In the Adams-Novikov spectral sequence for $\tmf$, there is a
    $d_5$-differential $d_5(\Delta) = \alpha \beta^2$.
\end{prop}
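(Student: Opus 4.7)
My plan is to derive the proposition from the (already established) identity $d_5(\Delta\beta) = \alpha\beta^3$ in the Adams--Novikov spectral sequence for $\tmf$---which, as recalled just before the statement, follows by combining Lemma \ref{beta-3-3} with the image of Toda's differential $d_5(\beta_{3/3}) = \alpha_1\beta_1^3$ under the naturality map from the ANSS of $\S$ to that of $\tmf$---by applying the Leibniz rule
\[
d_5(\Delta\beta) \;=\; d_5(\Delta)\cdot\beta \;+\; \Delta\cdot d_5(\beta)
\]
and then cancelling a factor of $\beta$. Provided $d_5(\beta) = 0$, the Leibniz rule gives $d_5(\Delta)\cdot\beta = \alpha\beta^3$, and then $d_5(\Delta) = \alpha\beta^2$ follows by divisibility.

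For $d_5(\beta) = 0$, I would appeal to the final clause of the theorem recalled at the start of this subsection: $\beta$ is the image of $\beta_1\in\pi_{10}\S_{(3)}$ along the map of Adams--Novikov spectral sequences induced by the unit $\S\to\tmf$. Since $\beta_1$ is a nonzero permanent cycle in the ANSS of the sphere (detecting the homotopy class of the same name), its image $\beta$ is a permanent cycle in the ANSS of $\tmf$, so in particular $d_5(\beta) = 0$.

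For the divisibility, I would inspect the $E_2$-page in the target bidegree $(t-s,s) = (23,5)$ of $d_5(\Delta)$. A general monomial $\alpha^a\beta^b c_4^c c_6^d \Delta^e$ in the presentation $\Z_3[\alpha,\beta,c_4,c_6,\Delta^{\pm 1}]/I$ has filtration $s = a+2b$; imposing $s = 5$ together with $\alpha^2 = 0$ forces $(a,b) = (1,2)$ with no room for nontrivial $c_4$, $c_6$, or $\Delta$ factors, so $E_2^{5,28}$ is one-dimensional over $\FF_3$ with generator $\alpha\beta^2$. Multiplication by $\beta$ therefore sends any candidate for $d_5(\Delta)$ injectively into $E_2^{7,38}$, and the target $\alpha\beta^3$ survives to $E_5$ precisely because it equals $d_5(\Delta\beta)$, which is nonzero by the Toda input. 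Consequently $d_5(\Delta) = \alpha\beta^2$, up to an $\FF_3^\times$ unit which can be absorbed into the choice of generator $\Delta$.

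The main obstacle I anticipate is not the argument itself but the bookkeeping that underlies the cancellation: one must verify that neither $\alpha\beta^2$ nor $\alpha\beta^3$ has been affected by any $d_r$ for $r<5$, and that the image of $\beta_{3/3}$ under naturality is genuinely $\Delta\beta$ rather than some equivalent representative that could produce an unwanted correction. Both points reduce to inspecting the $E_2$-page in a small range of bidegrees, which is feasible given the explicit description above; this is exactly the content of Lemma \ref{beta-3-3} for the latter issue, so in the end the proof should be essentially formal once the Toda differential is in hand.
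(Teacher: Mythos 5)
Your proposal is correct and matches the paper's approach exactly: the paper obtains $d_5(\Delta\beta)=\alpha\beta^3$ from Lemma \ref{beta-3-3} and the Toda differential, then states that the result follows ``by $\beta$-linearity,'' which is precisely the Leibniz-plus-cancellation argument you spell out (using that $\beta$ is a permanent cycle since it comes from $\beta_1\in\pi_\ast\S$, and that $E_2^{5,28}$ is one-dimensional on $\alpha\beta^2$ so the $\beta$-multiplication is injective in the relevant bidegree). You have simply made explicit the bookkeeping the paper compresses into that one phrase.
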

Since $3\alpha = 0$ in the Adams-Novikov spectral sequence of $\tmf$, we must
have $d_5(3\Delta) = 3\alpha\beta^2 = 0$. There are no other possibilities for
differentials on $3\Delta$, so it is a permanent cycle. Proposition
\ref{d5-tmf-3} shows that there is a Toda bracket $3\Delta\in
\langle 3, \alpha, \beta^2\rangle$ in $\pi_\ast \tmf$. This can be expressed by
the claim that $3\Delta$ can be expressed a composite
$$S^{24} \to \Sigma^{20} C\alpha_1\xrightarrow{\beta^2} \tmf,$$
where the first map is of degree $3$ on the top cell.

By $\Delta$-linearity, there is also a $d_5$-differential $d_5(\Delta^2) =
\alpha \beta^2 \Delta$, so $3\Delta^2$ lives in the $E_6$-page. There are no
further possibilities for differentials, so $3\Delta^2$ lives in $\pi_\ast
\tmf$. Again, this shows that $3\Delta^2\in \langle 3, \Delta \alpha,
\beta^2\rangle$.
Finally, we turn to $\Delta^3$. We have $d_5(\Delta^3) = 3\Delta^2 \alpha
\beta^2$, so we find that $\Delta^3 \in \langle 3, \Delta^2 \alpha,
\beta^2\rangle$.
We collect our conclusions in the following:
\begin{corollary}\label{bracket-delta}
    The following is true in $\pi_\ast \tmf$:
    \begin{itemize}
	\item $3\Delta\in \langle 3, \alpha, \beta^2\rangle$;
	\item $3\Delta^2 \in \langle 3, \Delta\alpha, \beta^2\rangle$
	\item $\Delta^3 \in \langle 3, \Delta^2\alpha, \beta^2\rangle$.
    \end{itemize}
\end{corollary}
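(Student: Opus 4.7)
The plan is to read off each bullet from the $d_5$-differential structure on $\Delta$ established in Proposition \ref{d5-tmf-3}, using the Moss convergence theorem to pass from Massey products on the Adams--Novikov $E_2$-page to Toda brackets in $\pi_\ast\tmf$. Applying the Leibniz rule to $d_5(\Delta) = \alpha\beta^2$ gives
\[ d_5(\Delta^n) = n\,\Delta^{n-1}\alpha\,\beta^2 \qquad (n = 1, 2, 3), \]
and the relation $3\alpha = 0$ (so $3\Delta^{n-1}\alpha = 0$) on the $E_2$-page is the key ingredient feeding into Moss each time.

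For the first two bullets the recipe is standard. The identity $d_5(\Delta^n) = n \cdot (\Delta^{n-1}\alpha) \cdot \beta^2$ together with $3\Delta^{n-1}\alpha = 0$ assembles a defining system in the cobar complex of the Adams--Novikov spectral sequence for the Massey product $\langle 3,\Delta^{n-1}\alpha,\beta^2\rangle$ whose value is represented by the cochain for $\Delta^n$ (multiplied by the $3$-local unit $n$ when $n = 2$). Moss's theorem then produces the asserted Toda bracket containing $3\Delta^n$ in $\pi_\ast\tmf$, up to the standard indeterminacy.

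For the third bullet, Leibniz gives $d_5(\Delta^3) = 3\Delta^2\alpha\beta^2 = 0$ on the $E_2$-page, so $\Delta^3$ is a $d_5$-cycle; the same chain-level identity, however, exhibits $\Delta^3$ itself as a bounding cochain in a defining system for the Massey product $\langle 3,\Delta^2\alpha,\beta^2\rangle$, and Moss then places $\Delta^3$ inside that bracket in $\pi_\ast\tmf$. The main obstacle I anticipate is verifying that this bracket is actually defined on the level of homotopy: the product $\Delta^2\alpha \cdot \beta^2$ is nonzero on the $E_6$-page and only dies at a higher Adams--Novikov page (via the higher differentials recorded in \cite{bauer-tmf}), so one must check that $\Delta^2\alpha\beta^2$ vanishes in $\pi_\ast\tmf$ and that no hidden multiplicative extension displaces the bracket from $\Delta^3$.
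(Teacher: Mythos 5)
Your route for the first two bullets is exactly the paper's: read the brackets off from $d_5(\Delta^k)=k\Delta^{k-1}\alpha\beta^2$ together with $3\alpha=0$, the passage from the differential to the Toda bracket being the standard Moss-type argument (the paper simply asserts it, phrasing $3\Delta$ as a composite $S^{24}\to\Sigma^{20}C\alpha_1\xrightarrow{\beta^2}\tmf$ of degree $3$ on the top cell). For those two bullets your argument is fine: $3\alpha=0$ holds exactly on $E_2$, $\alpha\beta^2$ and $\Delta\alpha\beta^2$ are $d_5$-boundaries, and the homotopy brackets are defined since $[\Delta\alpha]\in\pi_{27}\tmf_{(3)}$ has order $3$ and $[\Delta\alpha]\beta^2$ lies in a stem with no $3$-torsion.

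The gap is in the third bullet, and it is sharper than the obstacle you flag. The defining system you invoke does not exist: for the $d_5$-Massey product $\langle 3,\Delta^2\alpha,\beta^2\rangle$ one needs $\Delta^2\alpha\cdot\beta^2$ to be a $d_5$-boundary, but the only filtration-$0$ class in stem $72$ with nonzero $d_5$ would be a pure power of $\Delta$ (monomials involving $c_4$ or $c_6$ are killed by $\alpha$), and $d_5(\Delta^3)=3\Delta^2\alpha\beta^2=0$; so $\Delta^2\alpha\beta^2$ survives to $E_6$ and Moss's theorem does not apply in the form you describe. Worse, at the level of homotopy the middle entry is not there at all: in Bauer's computation $\Delta^2\alpha$ supports the $d_9$ hitting $\beta^5$, so it is not a permanent cycle and does not detect any element of $\pi_{51}\tmf_{(3)}$ (that group has no $3$-torsion); hence $\langle 3,\Delta^2\alpha,\beta^2\rangle$ is not literally a defined Toda bracket in $\pi_\ast\tmf$, and your closing worry about whether $\Delta^2\alpha\beta^2$ vanishes in homotopy is really a symptom of this more basic failure. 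To be fair, the paper's own justification of this bullet is the same one-line assertion and does not address the point either, so you have in effect reproduced the paper's argument including its weak spot; but as a proof your third bullet is not closed, and closing it requires either reinterpreting the bracket (e.g.\ as a suitable higher or matric bracket built from classes that do survive) or a different expression for $\Delta^3$ altogether.
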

\begin{remark}\label{toda-unique-3}
    The indeterminacy of the above Toda brackets in $\pi_\ast \tmf_{(3)}$ are
    $3\Z_{(3)}\{3\Delta\}$, $3\Z_{(3)}\{3\Delta^2\}$, and
    $3\Z_{(3)}\{\Delta^3\}$, respectively.
\end{remark}

\subsection{The Adams-Novikov spectral sequence for $B$}

In this section, we analyze the ring map $B\to \tmf$, and show that the
generators of $\pi_\ast \tmf_{(3)}$ lift to $\pi_\ast B_{(3)}$. By Corollary
\ref{12-equiv}, this implies Theorem \ref{main-thm-invert-2}. We begin by
showing:
\begin{prop}\label{delta-lift}
    There is an element in the $E_2$-page of the Adams-Novikov spectral sequence
    for $B$ which lifts the element $\Delta$ in the $E_2$-page of the
    Adams-Novikov spectral sequence for $\tmf$.
\end{prop}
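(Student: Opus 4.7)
The plan is to produce an explicit class $\widetilde{\Delta}$ in bidegree $(0, 24)$ of the $E_2$-page of the Adams--Novikov spectral sequence for $B_{(3)}$ whose image under the spectral sequence map induced by $B \to \tmf$ is $\Delta \in E_2^{0, 24}$ of the Adams--Novikov spectral sequence for $\tmf_{(3)}$.

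By Remark \ref{same-12}, the $\E{1}$-ring map $B \to \tmf$ is an equivalence in dimensions $\leq 12$, so (via the cofiber) it induces an isomorphism on $\BPP$-homology in degrees $\leq 12$. In particular, the bidegree-$(0, *)$ classes $c_4$ and $c_6$ of the Adams--Novikov $E_2$-page for $\tmf_{(3)}$ have canonical preimages $\widetilde{c_4} \in E_2^{0, 8}(B_{(3)})$ and $\widetilde{c_6} \in E_2^{0, 12}(B_{(3)})$. Using the multiplicative structure of the $E_2$-page, one can then form the combination $\widetilde{c_4}^3 - \widetilde{c_6}^2 \in E_2^{0, 24}(B_{(3)})$, and by naturality this maps to $c_4^3 - c_6^2 = 1728\Delta$ in $E_2^{0, 24}(\tmf_{(3)})$.

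The central step is to produce a class $\widetilde{\Delta} \in E_2^{0, 24}(B_{(3)})$ satisfying $1728\,\widetilde{\Delta} = \widetilde{c_4}^3 - \widetilde{c_6}^2$. Since $1728 = 2^6 \cdot 27$ and we have $3$-localized, this reduces to an integral $27$-divisibility statement in the $E_2$-page for $B_{(3)}$. To establish it, I would compute the $\BPP_*\BPP$-primitives of $\BPP_{24}(B_{(3)})$ explicitly using the polynomial description $\BPP_*(B_{(3)}) \cong \BPP_*[b_4, y_6]$ from Proposition \ref{bp-homology}, together with the $\E{1}$-ring map $B_{(3)} \to \BPP$ of that proposition (which pins down the images of $b_4, y_6$ and hence of $\widetilde{c_4}, \widetilde{c_6}$ modulo decomposables in $\BPP_*\BPP$). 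The $3$-adic divisibility then follows by identifying the required corrections to $\widetilde{c_4}$ and $\widetilde{c_6}$ coming from $\BPP_*$-multiples of $b_4, y_6$ and from the comodule coaction.

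The main obstacle is precisely this divisibility computation: while the image $1728\Delta$ is forced by the ring-map property of $B \to \tmf$, verifying that the lift divided by $27$ already lies in the primitive submodule of $\BPP_{24}(B_{(3)})$ requires careful bookkeeping of the $\BPP_*\BPP$-coaction on the polynomial generators $b_4, y_6$ and of the discrepancy between $\widetilde{c_4}, \widetilde{c_6}$ and their naive representatives $b_4, y_6$ in the polynomial algebra $\BPP_*[b_4, y_6]$.
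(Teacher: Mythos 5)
Your overall framing of the problem is right, and steps (1)--(2) are fine: by Remark \ref{same-12} the map $B \to \tmf$ induces an isomorphism on $\BPP_\ast$ in degrees $\leq 12$, hence on the $0$-line of the ANSS there, so $c_4$ and $c_6$ lift, and the ring-map property forces $\widetilde{c_4}^3 - \widetilde{c_6}^2 \mapsto 1728\Delta$. But the proof stops exactly where the content begins: you reduce the proposition to a $27$-divisibility statement for $\widetilde{c_4}^3 - \widetilde{c_6}^2$ in $E_2^{0,24}(B_{(3)})$ and then declare it the ``main obstacle'' without resolving it. That obstacle is not merely bookkeeping. In $\BPP_\ast B \cong \BPP_\ast[b_4,y_6]$ there is no relation of the form $v_1 b_4 = 2 y_6$, whereas the identity $c_4^3 - c_6^2 = 1728\Delta$ in $\BPP_\ast \tmf$ depends on precisely such a relation holding there. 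Expanding $\widetilde{c_4}^3 - \widetilde{c_6}^2$ in the polynomial ring over $\BPP_\ast$ produces cross terms like $v_1^4 b_4$ whose coefficients are not $27$-divisible; these terms only regroup into $1728\Delta$ after mapping into $\BPP_\ast \tmf$. So the desired equation $1728\widetilde{\Delta} = \widetilde{c_4}^3 - \widetilde{c_6}^2$ is not something one can expect to hold on the nose in $E_2^{0,24}(B_{(3)})$; it can fail by an element of the kernel of the comparison map, and your argument has no control over that kernel.

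The paper's proof takes a different route that sidesteps divisibility entirely. At $p=3$ one passes to the presentation of the elliptic curve Hopf algebroid by Weierstrass forms $y^2 = x^3 + a_2 x^2 + a_4 x$ (valid after inverting $2$), for which the discriminant admits an explicit integral polynomial expression in the quantities $b_4$ and $a_2 b_4$. Since Proposition \ref{bp-homology} shows the generators $b_4, y_6 \in \BPP_\ast B$ map to $b_4$ and $a_2 b_4$ (mod decomposables), this expression exhibits $\Delta$ directly as an element of the image on the $0$-line, with no need to identify it a posteriori as a $1728$th fraction of something else. If you want to salvage your approach, you would need to either prove that the map $E_2^{0,24}(B_{(3)}) \to E_2^{0,24}(\tmf_{(3)})$ is injective (so that divisibility in the target forces divisibility in the source) or track the full $\BPP_\ast\BPP$-coaction on $b_4, y_6$ to locate the correct primitive $\widetilde{\Delta}$ directly; both of these amount to reproducing the computation the paper performs in the more convenient elliptic-curve coordinates.
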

\begin{proof}
    To prove the proposition, we begin by recalling the definition of a
    representative for the element $\Delta$ in the cobar complex computing the
    $E_2$-page of the Adams-Novikov spectral sequence for $\tmf$. The Hopf
    algebroid $(\BPP_\ast \tmf, \BPP_\ast \BPP\otimes_{\BPP_\ast} \BPP_\ast
    \tmf)$ is isomorphic to the elliptic curve Hopf algebroid $(A, \Gamma)$
    presenting the moduli stack of cubic curves by \cite[Corollary
    5.3]{homologytmf}. Recall from \cite[Page 16]{bauer-tmf} (or \cite[Section
    III.1]{silverman}) that for an elliptic curve in Weierstrass form
    \begin{equation}\label{weier}
	y^2 + a_1 xy + a_3 y = x^3 + a_2 x^2 + a_4 x + a_6,
    \end{equation}
    we can define quantities
    $$b_2 = a_1^2 + 4a_2, \ b_4 = 2a_4 + a_1 a_3, \ b_6 = a_3^2+ 4a_6, \ b_8 =
    a_1^2 a_6 + 4a_2 a_6 - a_1 a_3 a_4 + a_2 a_3^2 - a_4^2,$$
    which allows us to define elements
    $$c_4 = b_2^2 - 24 b_4, \ c_6 = -b_2^3 + 36 b_2 b_4 - 216 b_6.$$
    The discriminant is
    $$\Delta = -b_2^2 b_8 - 8 b_4^3- 27 b_6^2 + 9b_2 b_4 b_6.$$
    Now, it is known that upon inverting $2$, every elliptic curve in
    Weierstrass form \eqref{weier} is isomorphic to one of the form
    \begin{equation}\label{weier-3}
	y^2 = x^3 + a_2 x^2 + a_4 x.
    \end{equation}
    It follows that the elliptic curve Hopf algebroid is isomorphic to a Hopf
    algebroid of the form $(A',\Gamma') = (\Z[1/2][a_2, a_4], A'[r]/(r^3 + a_2
    r^2 + a_4 r))$, where $I$ is some ideal consisting of complicated
    relations, and where the Hopf algebroid structure can be written down
    explicitly (as in \cite[Section 3]{bauer-tmf}). A straightforward
    calculation proves that the discriminant is then 
    \begin{equation}\label{Delta-3}
	\Delta = a_2^2 b_4^2 - 16 b_4^3.
    \end{equation}
    Turning to $B$, recall that $\BPP_\ast B \cong \BPP_\ast[b_4, y_{6}]$. The
    map $(\BPP_\ast B, \BPP_\ast \BPP \otimes_{\BPP_\ast} \BPP_\ast B) \to (A',
    \Gamma')$ of Hopf algebroids induced by the map $B\to \tmf$ sends $b_4$ to
    $b_4$ and $y_{6}$ to $a_2 b_4$ mod decomposables. It follows from Equation
    \eqref{Delta-3} that the element $\Delta$ already exists in the $0$-line of
    the Adams-Novikov spectral sequence for $B$.
    Using Sage to calculate the $3$-series of the formal group law of the
    elliptic curve \eqref{weier-3}, one finds that $v_1$ is $a_2$ up to a
    $3$-adic unit. We conclude that
    $$c_4 = 4v_1^2 - 24b_4, \ c_6 = -4v_1^3 - 144 y_{6}.$$
    This completes the proof of Proposition \ref{delta-lift}.
\end{proof}

By Remark \ref{same-12}, the elements $c_4,c_6\in \pi_\ast \tmf$ lift to
$\pi_\ast B$. The key to lifting the other elements of $\pi_\ast \tmf$ is the
following:
\begin{theorem}\label{d5-B}
    There is a differential $d_5(\Delta) = \alpha \beta^2$ in the Adams-Novikov
    spectral sequence for $B$. Moreover, $\alpha \beta^2$ vanishes in $\pi_\ast
    B$, and $3\Delta$ is a permanent cycle.
\end{theorem}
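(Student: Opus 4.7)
The plan is to combine naturality of the Adams--Novikov spectral sequence under the unit $\S \to B$ and the $\E{1}$-ring map $B \to \tmf$. Let $\widetilde{\Delta}$ denote the lift of $\Delta$ to $E_2^{24,0}(B)$ constructed in Proposition \ref{delta-lift}, and let $\alpha, \beta$ in $B$'s Adams--Novikov spectral sequence denote the images of $\alpha_1, \beta_1 \in E_2(\S)$ under the unit.

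First I would check that $\widetilde{\Delta}$ survives to the $E_5$-page. Any earlier differential $d_r(\widetilde{\Delta})$ with $r \in \{2,3,4\}$ lives in bidegree $(23, r)$ and, by naturality along $B \to \tmf$ together with the vanishing of $d_r(\Delta)$ in $\tmf$'s spectral sequence, must lie in the kernel of $E_r(B) \to E_r(\tmf)$; this kernel can be ruled out via a cobar computation using $\BPP_\ast(B) \cong \BPP_\ast[b_4, y_6]$ from Proposition \ref{bp-homology}. Then, to establish $d_5(\widetilde{\Delta}) = \alpha\beta^2$, I would push forward the Toda differential $d_5(\beta_{3/3}) = \alpha_1 \beta_1^3$ from the sphere's Adams--Novikov spectral sequence: the image $\overline{\beta_{3/3}} \in E_2(B)$ satisfies $d_5(\overline{\beta_{3/3}}) = \alpha\beta^3$, and by Lemma \ref{beta-3-3} combined with naturality along $B \to \tmf$, $\overline{\beta_{3/3}}$ should agree with $\widetilde{\Delta} \cdot \beta$ in $E_2^{34,2}(B)$ (up to kernel ambiguity). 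Using $\beta$-linearity of $d_5$ yields $d_5(\widetilde{\Delta}) \cdot \beta = \alpha\beta^3$, and division by $\beta$ --- valid provided $\beta$ acts injectively in the relevant bidegree --- gives $d_5(\widetilde{\Delta}) = \alpha \beta^2$.

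The remaining conclusions then follow easily: $\alpha\beta^2$ vanishes in $\pi_\ast B$ because it is a $d_5$-boundary, and $d_5(3\widetilde{\Delta}) = 3\alpha\beta^2 = 0$ since $3\alpha = 0$ already in $E_2(B)$ by naturality from the sphere. Any higher differential $d_r(3\widetilde{\Delta})$ with $r > 5$ maps to $d_r(3\Delta) = 0$ in $\tmf$'s Adams--Novikov spectral sequence, so by the same kernel analysis as before, $3\widetilde{\Delta}$ is a permanent cycle in $B$.

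The main obstacle is the identification of $\overline{\beta_{3/3}}$ with $\widetilde{\Delta} \cdot \beta$ on $B$'s $E_2$-page in bidegree $(34, 2)$ and the subsequent division by $\beta$. Both rest on a careful comparison between $B$'s and $\tmf$'s Adams--Novikov $E_2$-pages in a handful of bidegrees; if the comparison is insufficient, a fallback is to exhibit an explicit cobar representative for $\widetilde{\Delta}$ using the Thom spectrum description of $B$ via $\Omega S^{13} \to B\GL_1(\S \mmod \sigma)$ and compute $d_5(\widetilde{\Delta})$ directly.
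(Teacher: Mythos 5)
Your overall strategy (show $\Delta$ survives to $E_5$, then produce the $d_5$) is in the right spirit, but there are two genuine gaps. First, and most importantly, the inference ``$\alpha\beta^2$ vanishes in $\pi_\ast B$ because it is a $d_5$-boundary'' is not valid: dying in the spectral sequence only kills the class in the associated graded, while the actual homotopy product $\alpha\cdot\beta^2\in\pi_{23}B$ (of the images of $\alpha_1,\beta_1^2$ under the unit) could still be nonzero and detected in higher Adams--Novikov filtration. Ruling this out requires knowing that stem $23$ of the ANSS for $B$ has nothing above filtration $5$. The paper obtains exactly this from the algebraic Atiyah--Hirzebruch spectral sequence attached to the cell structure of $B$ (cells in dimensions $0,8,12,16,20$ in the relevant range, Figure \ref{B-cell}): the contributions to stem $23$ come from sphere classes in stems $23,15,11,7,3$, and $\alpha\beta^2$ is the first filtration-$5$ class there, so there is nothing above filtration $5$. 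That single computation is what simultaneously (i) kills the potential filtration-jumping extension, so $\alpha\beta^2=0$ in $\pi_\ast B$ and not merely on $E_\infty$, and (ii) shows the $d_5$ on $\Delta$ is the longest possible differential, so $3\Delta$ is automatically a permanent cycle. Your argument never establishes this vanishing region: your kernel analysis only addresses targets in bidegrees $(23,r)$ for $2\le r\le 4$, is deferred to an unspecified cobar computation, and your permanent-cycle claim for $3\Delta$ needs the bidegrees $(23,r)$, $r\ge 6$, which you do not control.

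Second, your route to the $d_5$ itself --- identifying the image of $\beta_{3/3}$ with $\widetilde{\Delta}\beta$ in $E_2^{34,2}(B)$ ``up to kernel ambiguity'' and then dividing by $\beta$ --- demands structural knowledge of $B$'s $E_2$- and $E_5$-pages (control of the kernel of $E_2(B)\to E_2(\tmf)$ in stem $34$, and injectivity of $\beta$-multiplication from bidegree $(23,5)$) that you neither establish nor need. Once $\Delta$ is known to survive to $E_5$ (the paper checks the possible targets in $(23,r)$, $2\le r\le 4$, again via the algebraic AHSS, noting the relevant sphere stems only have filtration-$1$ classes), the differential is simply forced by naturality along $B\to\tmf$: $\Delta\mapsto\Delta$ and $d_5(\Delta)=\alpha\beta^2\neq 0$ in $\tmf$, and the AHSS analysis of $(23,5)$ identifies the target. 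Replacing your $\beta_{3/3}$ detour and $\beta$-division by this naturality argument, and supplying the filtration-$5$ vanishing statement for stem $23$, would close the gaps.
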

\begin{proof}
    The element $\alpha \beta^2$ is detected in filtration $5$ in the
    Adams-Novikov spectral sequence for the sphere. We first check that there is
    no class above filtration $5$ in stem $23$ the Adams-Novikov spectral
    sequence for $B$. In Figure \ref{B-cell}, we depict the $20$-skeleton of
    $B$. Now, $\alpha \beta^2$ is the first class in filtration $5$ in the
    Adams-Novikov spectral sequence for the sphere, so there are no classes
    above filtration $5$ in stem $23$ in the algebraic Atiyah-Hirzebruch
    spectral sequence (converging to the Adams-Novikov spectral sequence of
    $B$). Consequently, there are no classes above filtration $5$ in stem $23$
    of the Adams-Novikov spectral sequence for $B$. It follows that $\alpha
    \beta^2$ must be detected in filtration $5$ in the Adams-Novikov spectral
    sequence for $B$. Moreover, if the $d_5$-differential on $\Delta$ exists,
    then it is the longest one (and hence $3\Delta$ is a permanent cycle).

    We now prove the $d_5$-differential. We first claim that there is no nonzero
    target for a $d_r$-differential on $\Delta$ for $2\leq r\leq 4$.  Indeed,
    such a class must live in bidegree $(t-s,s) = (23,r)$, so we only need to
    check that there are no classes in that bidegree. Such a class can
    only possibly come from those permanent cycles in the algebraic
    Atiyah-Hirzebruch spectral sequence which are supported on stems $23-8 =
    15$, $23-12 = 11$, $23-16 = 7$, or $23-20 = 3$ of the Adams-Novikov spectral
    sequence of the sphere.  The only classes in these stems are in
    Adams-Novikov filtration $1$, so cannot possibly contribute to a class that
    lives in bidegree $(t-s,s) = (23,r)$ with $2\leq r\leq 4$. Therefore, the
    first possibility for a differential on $\Delta$ is the $d_5$-differential
    $d_5(\Delta) = \alpha \beta^2$. The existence of this differential is forced
    by the same differential in the Adams-Novikov spectral sequence for $\tmf$.

    Therefore, $\alpha\beta^2$ vanishes in the $E_\infty$-page of the ANSS for
    $B$; there may, however, be a multiplicative extension causing $\alpha
    \beta^2$ to be nonzero in $\pi_\ast B$. But multiplicative extensions have
    to jump filtration, and we established that there are no classes above
    filtration $5$ in stem $23$ of the Adams-Novikov spectral sequence for $B$.
    Therefore, $\alpha\beta^2 = 0$ in $\pi_\ast B$, as desired.

    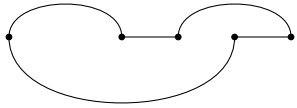
\begin{figure}
        \begin{tikzpicture}[scale=0.75]
	    \draw [fill] (0, 0) circle [radius=0.05];
	    \draw [fill] (2, 0) circle [radius=0.05];
	    \draw [fill] (3, 0) circle [radius=0.05];
	    \draw [fill] (4, 0) circle [radius=0.05];
	    \draw [fill] (5, 0) circle [radius=0.05];

	    \draw (2,0) to (3,0);
	    \draw (4,0) to (5,0);

	    \draw (0,0) to[out=90,in=90] (2,0);
	    \draw (3,0) to[out=90,in=90] (5,0);

	    \draw (0,0) to[out=-90,in=-90] (4,0);
	\end{tikzpicture}
	\caption{Cell structure of the $20$-skeleton of $B$; the bottom cell (in
	dimension $0$) is on the left; straight lines are $\alpha_1$, and curved
	lines correspond to $\alpha_2$ and $\alpha_4$, in order of increasing
	length.}
        \label{B-cell}
    \end{figure}
\end{proof}
\begin{corollary}
    The elements $3\Delta,3\Delta^2,\Delta^3\in\pi_\ast \tmf$ lift to $\pi_\ast
    B$.
\end{corollary}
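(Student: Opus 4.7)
The plan is to mirror the Toda-bracket argument of the proof of Theorem~\ref{A-surj}, using the expressions of $3\Delta$, $3\Delta^2$, and $\Delta^3$ as three-fold Toda brackets in $\pi_\ast\tmf$ recorded in Corollary~\ref{bracket-delta}, together with Theorem~\ref{d5-B}, Proposition~\ref{delta-lift}, and naturality of Toda brackets under the $\E{1}$-ring map $B\to\tmf$.

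I would begin with $3\Delta$. Since $3\alpha = 0$ in $\pi_\ast\S$ and $\alpha\beta^2 = 0$ in $\pi_\ast B$ by Theorem~\ref{d5-B}, the Toda bracket $\langle 3,\alpha,\beta^2\rangle$ is defined in $\pi_{24}B$. Any representative maps under $B\to\tmf$ into $\langle 3,\alpha,\beta^2\rangle\subseteq\pi_{24}\tmf$ by naturality, and Corollary~\ref{bracket-delta} together with Remark~\ref{toda-unique-3} identifies this $\tmf$-bracket as $3\Delta + 3\Z_{(3)}\{3\Delta\}$. The image in $\pi_{24}\tmf$ therefore has the form $(1+3n)\cdot 3\Delta$ for some $n\in\Z_{(3)}$, and since $1+3n$ is a $3$-local unit, dividing a representative by it produces a lift of $3\Delta$ itself.

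To handle $3\Delta^2$ and $\Delta^3$ I would upgrade the preceding step by first producing classes $\Delta\alpha,\Delta^2\alpha\in\pi_\ast B$. Proposition~\ref{delta-lift} lifts $\Delta$ to the $E_2$-page of the Adams-Novikov spectral sequence for $B$, and because $\alpha$ is spherical (hence a permanent cycle) with $\alpha^2=0$, the Leibniz rule gives $d_5(\Delta^i\alpha) = i\Delta^{i-1}\alpha^2\beta^2 = 0$. Extending the cellular filtration bookkeeping from the proof of Theorem~\ref{d5-B} to the stems containing $\Delta^i\alpha$ and $(\Delta^i\alpha)\beta^2$ rules out later differentials and hidden multiplicative extensions, so each $\Delta^i\alpha$ detects a class in $\pi_\ast B$ that is annihilated by $3$ (from $3\alpha=0$) and by $\beta^2$. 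With these vanishings in hand, the brackets $\langle 3,\Delta\alpha,\beta^2\rangle$ and $\langle 3,\Delta^2\alpha,\beta^2\rangle$ are defined in $\pi_\ast B$; naturality and the unit-rescaling trick from the first step, combined with the indeterminacies $3\Z_{(3)}\{3\Delta^2\}$ and $3\Z_{(3)}\{\Delta^3\}$ of Remark~\ref{toda-unique-3}, then lift $3\Delta^2$ and $\Delta^3$.

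I expect the main obstacle to be the filtration bookkeeping in the second round of Toda brackets, namely verifying that $(\Delta\alpha)\beta^2$ and $(\Delta^2\alpha)\beta^2$ vanish in $\pi_\ast B$ and that no higher-length differential knocks out $\Delta^i\alpha$. In spirit this is the analogue at the prime $3$ of the elementary relation $\eta\nu=0$ exploited in the proof of Theorem~\ref{A-surj}, but it now requires an algebraic Atiyah-Hirzebruch analysis of the cell structure of $B$ through stem $\sim72$ rather than a small hand computation.
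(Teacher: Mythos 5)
Your proposal follows the paper's proof essentially verbatim: both lift $3\Delta$, $3\Delta^2$, and $\Delta^3$ by invoking Theorem~\ref{d5-B}, the Toda brackets of Corollary~\ref{bracket-delta}, and the unit-rescaling trick with the indeterminacies recorded in Remark~\ref{toda-unique-3}. The only divergence is cosmetic --- the paper gets $3\Delta$ directly from the ``permanent cycle'' clause of Theorem~\ref{d5-B} rather than from the bracket $\langle 3,\alpha,\beta^2\rangle$, and the paper simply asserts that Theorem~\ref{d5-B} makes all three brackets well-defined in $\pi_\ast B$, whereas you explicitly flag (and correctly identify as the remaining work) the verification that $\Delta\alpha$ and $\Delta^2\alpha$ survive in $\pi_\ast B$ with $\Delta^i\alpha\beta^2=0$, a step the paper leaves implicit.
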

\begin{proof}
    Theorem \ref{d5-B} verifies that $3\Delta$ lifts to $\pi_\ast B$ and that
    the brackets in Corollary \ref{bracket-delta} are well-defined in $\pi_\ast
    B$. This implies that $3\Delta^2$ and $\Delta^3$ in $\pi_\ast \tmf$ lift to
    $\pi_\ast B$ up to indeterminacy. Remark \ref{toda-unique-3} tells us the
    indeterminacy of the brackets in Corollary \ref{bracket-delta}. If
    $3\Delta^2 + 3n[3\Delta^2] = 3(3n+1)\Delta^2$ (resp. $\Delta^3 + 3n\Delta^3
    = (3n+1)\Delta^3$) lifts for some nonzero $n\in \Z_{(3)}$, then so does
    $3\Delta^2$ (resp. $\Delta^3$) since $3n+1$ is a $3$-local unit.
\end{proof}
The elements $\alpha$, $\beta$, $c_4$, $c_6$, $3\Delta$, $3\Delta^2$,
$\Delta^3$, and $b = \langle \beta^2, \alpha, \alpha\rangle$ (no indeterminacy)
generate the homotopy of $\tmf$. Moreover, $\alpha \beta^2 = 0$ in $\pi_\ast B$
and $\alpha^2 = 0$ in the sphere, so $b$ admits a lift to $\pi_\ast B$.
Therefore, all generators of $\pi_\ast \tmf$ admit lifts to $\pi_\ast B$; this
yields Theorem \ref{main-thm-invert-2}.

\section{Theorem \ref{main-thm} after localizing at $2$}\label{prime-2}

Our goal in this section is to prove:
\begin{theorem}\label{main-thm-prime-2}
    The map $\pi_\ast B_{(2)} \to \pi_\ast \tmf_{(2)}$ is surjective on
    homotopy.
\end{theorem}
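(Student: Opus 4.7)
The plan is to follow the template of Section \ref{invert-2}. Since $B_{(2)} \to \tmf_{(2)}$ is an $\E{1}$-ring map, it suffices to lift a generating set of $\pi_\ast \tmf_{(2)}$ as an algebra over $\pi_\ast \S_{(2)}$. Bauer's calculation \cite{bauer-tmf} provides such a set, consisting of the spherical elements $\eta$ and $\nu$; the classes $\epsilon \in \pi_8 \tmf$, $\kappa \in \pi_{14} \tmf$, and $\bar\kappa \in \pi_{20} \tmf$; the modular forms $c_4$ and $c_6$; and the ``divisibility-forced'' multiples $8\Delta$, $4\Delta^2$, $2\Delta^4$, and $\Delta^8$ arising from successive differentials on powers of $\Delta$ in the descent spectral sequence.

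Several of these generators lift with little effort. The spherical classes $\eta$ and $\nu$ lift because the unit $\S \to \tmf_{(2)}$ factors through $B_{(2)}$; the modular forms $c_4$ and $c_6$ lift by Remark \ref{same-12}; and the classes $\epsilon$, $\kappa$, $\bar\kappa$ admit expressions as Toda brackets in $\pi_\ast \S_{(2)}$ built out of $\eta$ and $\nu$, so they lift to $\pi_\ast B_{(2)}$ because the relations witnessing their nullity are inherited from the sphere and $B$ is an $\E{1}$-ring.

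The main obstacle will be lifting the divisibility-forced multiples of powers of $\Delta$. This requires a careful analysis of the Adams-Novikov spectral sequence for $B_{(2)}$ along the lines of Theorem \ref{d5-B}. The strategy will be inductive on $i \in \{0,1,2,3\}$: assuming that $2^{3-j}\Delta^{2^j}$ lifts for $j < i$, I would analyze the differentials on $\Delta^{2^i}$ in the ANSS for $B_{(2)}$. The sparse cell structure of $B$ recorded in Proposition \ref{bp-homology} (polynomial on generators in degrees $8$ and $12$) guarantees that the algebraic Atiyah-Hirzebruch spectral sequence built from the ANSS of the sphere contributes few potentially obstructing classes in the relevant bidegrees. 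Ruling these out should force the differentials on $\Delta^{2^i}$ in the ANSS for $B_{(2)}$ to be precisely the images of those in $\tmf_{(2)}$, so that $2^{3-i}\Delta^{2^i}$ is a permanent cycle. Each such permanent cycle is then realized by a Toda bracket whose factors already lift, and the remaining indeterminacies are handled exactly as in the proof of Theorem \ref{A-surj}: a unit multiple $(2n+1) x$ lifts if and only if $x$ does.

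The hardest step will be the inductive passage to $\Delta^8$, which lives in a high stem and requires ruling out a long sequence of potential differentials and hidden multiplicative extensions; a secondary difficulty is that the ANSS at $p=2$ exhibits a much richer pattern of permanent cycles above the Hurewicz line than at $p=3$, so the sparseness arguments must be executed more carefully and the bookkeeping for Toda brackets (and their indeterminacies) is appreciably more involved.
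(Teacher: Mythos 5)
Your overall strategy — exploit the $\E{1}$-ring map to reduce to lifting generators, then use the ANSS for $B$ and Toda brackets — matches the paper's. However, the proposal as written has several genuine gaps.

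First and most seriously, your generating set for $\pi_\ast\tmf_{(2)}$ as a $\pi_\ast\S_{(2)}$-algebra is far too small. The $2$-local homotopy of $\tmf$ contains many indecomposable classes of the form ``$\Delta^k \cdot (\text{spherical})$'' where $\Delta^k$ itself does not exist: for example $\Delta\eta$, $2\Delta\nu$, $\Delta\epsilon$, $\Delta\eta\kappa$, $\Delta\eta\bar\kappa$, $\Delta^2\nu$, $\Delta^4\eta$, $\Delta^4\nu$, $\Delta^4\epsilon$, $\Delta^4\kappa$, $2\Delta^5\nu$, $\Delta^5\epsilon$, $\Delta^6\nu$, as well as $c_4\Delta^k$ and $2c_6\Delta^k$ for $k\geq 1$. (Note also that $c_6$ itself is not in the image of $\pi_\ast\tmf\to\mathrm{MF}_\ast$; only $2c_6$ is.) None of these are products of the generators you list; each requires its own Toda bracket expression (Propositions \ref{toda-1} and \ref{toda-3}) or, for $c_4\Delta^k$ and $2c_6\Delta^k$, a detour through the map $B\to T(2)\to\tmf_1(3)$. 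A proof that omits them does not establish surjectivity.

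Second, you don't isolate the crucial non-formal lemma: one must show not just a $d_5$-differential in the ANSS for $B$ but that $\ol{\kappa}\nu = 0$ in $\pi_\ast B$ (Theorem \ref{d5-B-2}), including ruling out hidden extensions. This is the single input that makes all the Toda brackets of Propositions \ref{toda-1}--\ref{toda-3} well-defined in $\pi_\ast B$, and its proof is not a sparseness count — the paper uses the bracket $\ol{\kappa}\nu^2\in\langle\eta_4\sigma,\eta,2\rangle$ from Hopkins--Mahowald and the attaching map $\sigma$ on the $8$-cell of $B$ to kill $h_4 c_0$ on the $E_2$-page. Your proposal gestures at sparseness of the cell structure but gives no mechanism for this vanishing.

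Third, your proposed induction over $i\in\{0,1,2,3\}$ for $2^{3-i}\Delta^{2^i}$, with a ``long sequence of potential differentials'' to be ruled out at $\Delta^8$, overestimates the work: once $d_5(\Delta)=\ol{\kappa}\nu$ and $d_7(4\Delta)=\ol{\kappa}\eta^3$ are established, the differentials on all higher powers of $\Delta$ follow by the Leibniz rule plus spherical relations, and the relevant survivors are expressed as Toda brackets whose well-definedness is automatic once $\ol{\kappa}\nu=0$ in $\pi_\ast B$. No fresh stem-by-stem analysis at $\Delta^8$ is required (or possible to avoid via anything simpler).

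Finally, a minor point: $\epsilon,\kappa,\ol{\kappa}$ are spherical, so they lift via the unit $\S\to B$ directly; no Toda bracket machinery is needed for them.
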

Together with Theorem \ref{main-thm-invert-2} and Corollary \ref{12-equiv}, this
proves Theorem \ref{main-thm}.
\begin{convention}
    We shall localize at $2$ throughout this section, unless explicitly
    mentioned otherwise.
\end{convention}

\subsection{The Adams-Novikov spectral sequence for $\tmf$}\label{anss-tmf-2}

In this section, we review the Adams-Novikov spectral sequence for $\tmf$ at
$p=2$. The following result is well-known, and is proved in \cite{bauer-tmf}
(see also \cite[Proposition 1.4.9]{mark-handbook}):
\begin{theorem}
    The $E_2$-page of the descent spectral sequence (isomorphic to the
    Adams-Novikov spectral sequence) for $\tmf$ is
    $$\H^\ast(\Mell; \omega^{2\ast}) \cong \Z_{(2)}[c_4, c_6, \Delta^{\pm 1},
    \eta, a_1^2 \eta, \nu, \epsilon, \kappa, \ol{\kappa}]/I,$$
    where $I$ is the ideal generated by the relations
    \begin{gather*}
	2\eta, \eta \nu, 4\nu, 2\nu^2, \nu^3 = \eta\epsilon, \\
	2\epsilon, \nu\epsilon, \epsilon^2, 2a_1^2 \eta, \nu a_1^2 \eta,
	\epsilon a_1^2 \eta, (a_1^2 \eta)^2 = c_4 \eta^2, \\
	2\kappa, \eta^2 \kappa, \nu^2 \kappa = 4\ol{\kappa}, \epsilon\kappa,
	\kappa^2, \kappa a_1^2 \eta, \\
	\nu c_4, \nu c_6, \epsilon c_4, \epsilon c_6, a_1^2 \eta c_4 = \eta c_6,
	a_1^2 \eta c_6 = \eta c_4^2, \\
	\kappa c_4, \kappa c_6, \ol{\kappa} c_4 = \eta^4 \Delta, \ol{\kappa} c_6
	= \eta^2 (a_1^2 \eta) \Delta, 1728 \Delta = c_4^3 - c_6^2.
    \end{gather*}
\end{theorem}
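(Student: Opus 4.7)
The plan is to compute $\H^\ast(\Mell;\omega^{\otimes\ast})$ at the prime $2$ directly, following Bauer. The first step is to identify the Adams-Novikov spectral sequence for $\tmf$ with the descent spectral sequence for the sheaf of $\Eoo$-rings on $\Mell$; this identification is built into the Goerss-Hopkins construction of $\tmf$ and is the content of the parenthetical in the statement. Hence it suffices to compute the sheaf cohomology $\H^\ast(\Mell;\omega^{\otimes\ast})$, and for this I would present $\Mell$ by the Weierstrass Hopf algebroid $(A,\Gamma)$ with $A = \Z[a_1,a_2,a_3,a_4,a_6]$ and $\Gamma = A[r,s,t]$ encoding the standard change of variable on Weierstrass cubics. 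Then $\H^\ast(\Mell;\omega^{\otimes\ast}) \cong \Ext^\ast_\Gamma(A,A)$, with internal grading induced by $\omega$, which can be attacked via the cobar complex.

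Since we cannot simplify $(A,\Gamma)$ by completing the square at $p=2$, I would filter by powers of the ideal $(2,a_1)$ (a Bockstein-type filtration) so that on the $E_1$-page the Hopf algebroid becomes essentially the supersingular elliptic curve Hopf algebroid over $\FF_2$, which is computable by hand. From this $E_1$-page one reads off the low-filtration generators, identifying $\eta$ and $\nu$ with their Hurewicz images from the sphere, constructing the higher classes $\epsilon,\kappa,\ol\kappa$ as explicit Massey products in the cobar complex, and realizing $c_4$, $c_6$, and $\Delta$ via the classical modular-form formulas in Weierstrass coordinates. The class $a_1^2\eta$ arises from a hidden $a_1$-Bockstein and is the distinctively $2$-primary feature of the answer.

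The main obstacle is verifying the long list of relations generating $I$. The nilpotent multiplicative relations, such as $\nu^3 = \eta\epsilon$, $(a_1^2\eta)^2 = c_4\eta^2$, and $\nu^2\kappa = 4\ol\kappa$, are hidden extensions across the $(2,a_1)$-filtration and must be detected by chasing explicit cobar cochains whose coboundaries witness the relation. The modular-geometric relations $\ol\kappa c_4 = \eta^4\Delta$, $\ol\kappa c_6 = \eta^2(a_1^2\eta)\Delta$, and $1728\Delta = c_4^3 - c_6^2$ require matching the topologically defined Massey-product classes against their algebro-geometric counterparts in $\omega^{\otimes\ast}$. I would handle each by producing the relevant cobar computation, and then use a dimension count in each bidegree against the $\FF_2$-dimensions read off from the Bockstein $E_1$-page (and confirmed against the $\BPP$-based Adams-Novikov input) to ensure that no further generators or relations remain unaccounted for.
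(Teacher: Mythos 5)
The paper offers no proof of this theorem at all---it is quoted as a known computation, with the proof deferred to \cite{bauer-tmf} (see also \cite[Proposition 1.4.9]{mark-handbook})---and your outline (Weierstrass Hopf algebroid presentation, $2$-local simplification, a Bockstein-type filtration whose associated graded is computable over $\FF_2$, cobar-level Massey products for $\epsilon$, $\kappa$, $\ol{\kappa}$, and hidden-extension/dimension-count arguments for the relations) is precisely the computation carried out in that reference, so you are following the same route the paper relies on. The one imprecision worth noting is that the identification of the descent spectral sequence with the Adams--Novikov spectral sequence is not simply ``built into'' the Goerss--Hopkins construction: it rests on identifying the Hopf algebroid $(\BPP_\ast \tmf, \BPP_\ast\BPP\otimes_{\BPP_\ast}\BPP_\ast\tmf)$ with the cubic-curve Hopf algebroid, which is the content of \cite[Corollary 5.3]{homologytmf} that the paper invokes elsewhere for exactly this purpose.
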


\begin{remark}\label{c4-2c6}
    The elements $c_4$ and $2c_6$ are permanent cycles. There is a map $\tmf \to
    \tmf_1(3)$, where the target is complex oriented. The elements $c_4$ and
    $2c_6$ are nontrivial in $\pi_\ast \tmf_1(3)$. In fact, the image of the map
    $\tmf\to \tmf_1(3)$ consists of the elements $c_4$, $2c_6$, $c_4 \Delta^k$,
    and $2c_6 \Delta^k$ for $k\geq 1$, so these elements must be permanent
    cycles in the Adams-Novikov spectral sequence for $\tmf$.
\end{remark}

The ANSS for $\tmf$ is essentially determined from Toda's relation $\ol{\kappa}
\nu^3 = 0$ in $\pi_{29} \S$. We will explain this statement in the rest of this
section. The relation $\ol{\kappa} \nu^3 = 0\in \pi_{29} \S$ is enforced by the
differential $d_5(\beta_{6/2}) = \ol{\kappa} \nu^3$ in the ANSS for the sphere
(see \cite{isaksen-anss-charts}).  Then:
\begin{lemma}
    There is a relation $\beta_{6/2} = \Delta \nu^2$ in the $E_2$-page of the
    Adams-Novikov spectral sequence for $\tmf$.
\end{lemma}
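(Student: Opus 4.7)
The plan is to mirror the proof of Lemma~\ref{beta-3-3}. Both $\beta_{6/2}$ and $\Delta\nu^2$ lie in bidegree $(t-s, s) = (30, 2)$ of the Adams-Novikov $E_2$-page for $\tmf_{(2)}$. The first step is to inspect Bauer's presentation in this bidegree: enumerating monomials in the listed generators of total topological degree $32$ and Adams-Novikov filtration $2$, and then imposing the relations, one should find that $E_2^{2, 32}$ is cyclic of order $2$, generated by $\Delta\nu^2$ (using $2\nu^2 = 0$). Under the ring map of spectral sequences $\mathrm{ANSS}(\S) \to \mathrm{ANSS}(\tmf)$, the image of $\beta_{6/2}$ is therefore either $0$ or $\Delta\nu^2$.

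The second step is to rule out the zero case using naturality of the Toda differential. On the sphere we have $d_5(\beta_{6/2}) = \nu^3 \ol{\kappa}$, and the edge map sends this to the analogous differential in $\mathrm{ANSS}(\tmf)$. Since $\nu^3 \ol{\kappa} = \eta \epsilon \ol{\kappa}$ is visibly nonzero on the $\tmf$ $E_2$-page (no relation in the presentation kills it), the image of $\beta_{6/2}$ in $\mathrm{ANSS}(\tmf)$ cannot vanish. Combined with the first step, this forces $\beta_{6/2} = \Delta\nu^2$.

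The step I expect to be the main obstacle is the first one: carefully confirming that $(30, 2)$ collapses to $\Z/2$ rather than admitting extra generators such as $\Delta \cdot \eta \cdot (a_1^2\eta)$. If that bookkeeping turns out to be delicate, an alternative route is to multiply by $\nu$: since $\nu^4 = \nu\eta\epsilon = 0$, the product $\nu \beta_{6/2}$ is already a permanent cycle in $\mathrm{ANSS}(\S)$, and its image in $\pi_{33} \tmf$ is recorded in \cite{tmf}. Comparing with $\nu \cdot \Delta\nu^2 = \Delta\nu^3$ and noting that multiplication by $\nu$ is injective on the relevant line of the $E_2$-page pins down $\beta_{6/2} = \Delta\nu^2$ on the nose, exactly as in the $p=3$ argument of Lemma~\ref{beta-3-3}.
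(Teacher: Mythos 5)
The paper states this lemma without any proof (in contrast with the analogous $p=3$ statement, Lemma~\ref{beta-3-3}, which gets a proof), so there is no author argument to compare against; I will evaluate your proposal on its own terms.

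You correctly identify the danger spot, but neither of your two routes actually closes it. The bidegree $(t-s,s)=(30,2)$ of the $\tmf$ $E_2$-page contains $\Delta\eta(a_1^2\eta)$ as well as $\Delta\nu^2$. The presentation has no relation killing $\eta(a_1^2\eta)$ (the listed ones annihilate $a_1^2\eta$ only against $2$, $\nu$, $\epsilon$, $\kappa$, and the listed relation $\ol{\kappa}c_6 = \eta^3(a_1^2\eta)\Delta$ -- the exponent in the paper's display is a typo -- exhibits $\eta$-multiples of $a_1^2\eta$ as genuinely nonzero). Moreover $\eta(a_1^2\eta) \neq \nu^2$: multiplying by $\nu$ would give $0 = \nu\eta(a_1^2\eta) = \nu^3 = \eta\epsilon$, contradicting the $E_2$-relation $\nu^3 = \eta\epsilon \neq 0$. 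So $E_2^{2,32}$ has $\FF_2$-rank at least $2$, and Step~1 as stated fails.

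The alternative route inherits the same failure: multiplication by $\nu$ is \emph{not} injective on $E_2^{2,32}$, precisely because $\nu a_1^2\eta = 0$ is a listed relation, so $\nu\cdot\Delta\eta(a_1^2\eta) = 0$. Hence the equation $\nu\beta_{6/2} = \Delta\nu^3$ only determines $\beta_{6/2}$ modulo $\Delta\eta(a_1^2\eta)$, i.e.\ leaves the possibility $\beta_{6/2} = \Delta\nu^2 + \Delta\eta(a_1^2\eta)$. (This ambiguity is in fact harmless for the paper's downstream use, since $d_5\bigl(\Delta\eta(a_1^2\eta)\bigr) = d_5(\Delta)\cdot\eta(a_1^2\eta)$ is annihilated by $\eta\nu = 0$ once one knows $d_5(\Delta)$ is a $\nu$-multiple; but it does mean the lemma's on-the-nose equality has not been established.) To actually nail the equality you need an additional input that kills the $\Delta\eta(a_1^2\eta)$-component of the image of $\beta_{6/2}$ -- for instance, by mapping to a complex-oriented receptor such as $\tmf_1(3)$, where $a_1^2\eta$ maps to zero and the image of $\beta_{6/2}$ can be tracked via the $\BP$-theoretic description, or simply by citing Bauer's explicit chart for the $\tmf$ $E_2$-page in this bidegree.

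Finally, your Step~2 argument that $\beta_{6/2}$ cannot map to $0$ also has a small soft spot: ``no relation kills $\nu^3\ol{\kappa}$'' is a statement about the $E_2$-page, whereas naturality of $d_5$ operates on $E_5$; you should note that $\nu^3\ol{\kappa} = \eta\epsilon\ol{\kappa}$ is a product of permanent cycles in a range where, by sparseness, it cannot be a target of a $d_3$ or $d_4$, so it survives to $E_5$ nonzero.
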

This gives the differential $d_5(\Delta \nu^2) = \ol{\kappa} \nu^3$ in the ANSS
for $\tmf$. By $\nu$-linearity, we have $d_5(\Delta) = \ol{\kappa}\nu$. Since
$4\nu = 0$ in the $E_2$-page of the ANSS, the class $4\Delta$ survives. The
relation $4\nu = \eta^3$ forces a $d_7$-differential on $4\Delta$. In summary:
\begin{theorem}\label{d5}
    There are differentials $d_5(\Delta) = \ol{\kappa}\nu$ and $d_7(4\Delta) =
    \ol{\kappa} \eta^3$ in the ANSS for $\tmf$, and $\ol{\kappa}\nu = 0$ in
    $\pi_\ast \tmf$.
\end{theorem}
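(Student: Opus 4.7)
The plan is to derive all three conclusions from the sphere-level Toda differential $d_5(\beta_{6/2}) = \ol{\kappa}\nu^3$ by transporting it across the unit $\S\to\tmf$ and then exploiting the multiplicative structure of the spectral sequence.

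First, combining naturality of the ANSS along $\S\to\tmf$ with the identification $\beta_{6/2} = \Delta\nu^2$ in the $E_2$-page of the ANSS for $\tmf$ established in the preceding lemma, I obtain the differential $d_5(\Delta\nu^2) = \ol{\kappa}\nu^3$ in the ANSS for $\tmf$. Since $\nu$ is a permanent cycle (it lifts from the sphere and has no possible target), the Leibniz rule yields $d_5(\Delta)\cdot\nu^2 = (\ol{\kappa}\nu)\cdot\nu^2$. I then read off $d_5(\Delta) = \ol{\kappa}\nu$ by inspecting the bidegree $(t-s,s) = (23,5)$ on the $E_2$-page presented at the start of this section: using the listed generators and relations, no class that would give an alternative $d_5$-target differs from $\ol{\kappa}\nu$ by a $\nu^2$-torsion element. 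As $d_5(\Delta)$ is now a boundary, this immediately gives $\ol{\kappa}\nu = 0$ in $\pi_\ast\tmf$.

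Next, the $E_2$-relation $4\nu = 0$ forces $d_5(4\Delta) = 4\ol{\kappa}\nu = 0$, so $4\Delta$ survives past $d_5$; a scan of bidegrees in the $E_2$-page rules out any $d_6$-differential, so $4\Delta$ persists to the $E_7$-page. The filtration jump is supplied by the sphere-level relation $4\nu = \eta^3$ in $\pi_3\S$, which is a hidden extension in the ANSS for the sphere. Pairing this hidden extension with $\ol{\kappa}$ shows that $\ol{\kappa}\eta^3$, living in bidegree $(t-s,s)=(23,7)$, detects $4\ol{\kappa}\nu$ in $\pi_\ast\tmf$. Because $\ol{\kappa}\nu = 0$ in $\pi_\ast\tmf$ by the previous paragraph, $\ol{\kappa}\eta^3$ must also vanish in $\pi_\ast\tmf$; by inspection of the ring structure of the $E_2$-page, the only way this is consistent is for $d_7(4\Delta) = \ol{\kappa}\eta^3$.

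The main obstacle I anticipate is the bookkeeping needed to pin down uniqueness of the $d_5$-target: namely, verifying that no $\nu^2$-torsion class in $E_2^{5,28}$ could be added to $\ol{\kappa}\nu$ without contradicting the relations recorded in the Theorem that opens this section. Closely related is the necessity of ruling out shorter differentials on $\Delta$ (for $d_r$ with $r<5$) and on $4\Delta$ (for $d_r$ with $5<r<7$). As in the three-primary argument in Theorem \ref{d5-B}, both checks reduce to finite calculations on the explicitly described $E_2$-page and are routine once the ring structure is tabulated.
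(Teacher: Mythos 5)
Your argument retraces the paper's proof almost line by line: push Toda's $d_5(\beta_{6/2})=\ol{\kappa}\nu^3$ forward along the unit $\S\to\tmf$, substitute $\beta_{6/2}=\Delta\nu^2$, cancel $\nu^2$ (i.e.\ invoke $\nu$-linearity) to obtain $d_5(\Delta)=\ol{\kappa}\nu$, and use $4\nu=0$ on $E_2$ together with the hidden spherical relation $4\nu=\eta^3$ to force $d_7(4\Delta)=\ol{\kappa}\eta^3$. The one spot where you are slightly quicker than warranted is in claiming $\ol{\kappa}\nu=0$ in $\pi_\ast\tmf$ follows ``immediately'' from the $d_5$: strictly one must also verify there is no class of Adams--Novikov filtration $>5$ in stem $23$ that could detect $\ol{\kappa}\nu$ after a jump, a check the paper itself only spells out in the $B$-analogue (Theorem~\ref{d5-B-2}).
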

In particular, since $2\eta = 0$ in the ANSS, $8\Delta$ survives to the
$E_8$-page. There are no more differentials, so it is a permanent cycle.
Theorem \ref{d5} then shows that there is a Toda bracket $8\Delta \in \langle 8,
\nu, \ol{\kappa}\rangle$ in $\pi_\ast \tmf$; this bracket is well-defined since
$8\nu = 0$ in $\pi_\ast \S$. This can be expressed by the claim that $8\Delta$
may be expressed as a composite
$$S^{24} \to \Sigma^{20} C\nu \xrightarrow{\ol{\kappa}} \tmf,$$
where the first map is degree $8$ on the top cell. Similarly, $\Delta \eta \in
\langle \eta, \nu, \ol{\kappa}\rangle$ in $\pi_\ast \tmf$; this bracket is
well-defined since $\eta\nu = 0$ in $\pi_\ast \S$. Arguing in the same way, and
using the spherical relations $2\nu^2 = 0$, $\epsilon\nu = 0$, we find that:
\begin{prop}\label{toda-1}
    The following Toda brackets exist in $\pi_\ast \tmf$:
    \begin{enumerate}
	\item $8\Delta \in \langle 8, \nu, \ol{\kappa}\rangle$;
	\item $\Delta\eta = \langle \eta, \nu, \ol{\kappa}\rangle$;
	\item $2\Delta\nu = \langle 2\nu, \nu, \ol{\kappa}\rangle$;
	\item $\Delta\epsilon = \langle \epsilon, \nu, \ol{\kappa}\rangle$;
	\item $\Delta\eta \kappa = \langle \eta\kappa, \nu,
	    \ol{\kappa}\rangle$;
	\item $\Delta\eta\ol{\kappa} = \langle \eta\ol{\kappa}, \nu,
	    \ol{\kappa}\rangle$.
    \end{enumerate}
    None of these except the first have any indeterminacy.
\end{prop}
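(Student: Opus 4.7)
The strategy is to derive all six brackets from the $d_5$-differential of Theorem \ref{d5}, together with Moss's convergence theorem. Since $\nu\ol{\kappa}=0$ in $\pi_\ast\tmf$ (as $d_5(\Delta)=\ol{\kappa}\nu$ and $\Delta$ sits in filtration $0$), a choice of nullhomotopy extends $\ol{\kappa}\colon S^{20}\to\tmf$ along the cofiber sequence $S^{23}\xrightarrow{\nu}S^{20}\to \Sigma^{20}C\nu$ to a map $\ol{\kappa}'\colon \Sigma^{20}C\nu\to\tmf$. For each $x$ appearing in (1)--(6) with $x\nu=0$ (interpreted in $\tmf$), the null triviality $x\nu=0$ produces a lift of $x$ along the pinch $\Sigma^{20}C\nu\to S^{24}$, and composing with $\ol{\kappa}'$ represents the bracket $\langle x,\nu,\ol{\kappa}\rangle$ in $\pi_\ast\tmf$. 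This is the single mechanism that the paper already used to identify $8\Delta$ with such a composite, and the plan is to run it six times.

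The first step is to check that each bracket is defined. The relation $\nu\ol{\kappa}=0$ is Theorem \ref{d5}. The relations $x\nu=0$ are spherical in cases (2)--(6): $\eta\nu=\epsilon\nu=0$ in $\pi_\ast\S$; $2\nu\cdot\nu=2\nu^2=0$ in $\pi_\ast\S$; and $\eta\kappa\cdot\nu=\eta\ol{\kappa}\cdot\nu=0$ follow by pulling out $\eta\nu=0$. For case (1), the needed relation $8\nu=0$ holds in $\pi_\ast\tmf$ because $4\nu=\eta^3$ and $2\eta=0$. The second step is to identify each bracket with the claimed homotopy element via Moss's convergence theorem. Because $d_5(\Delta)=\ol{\kappa}\nu$ and the classes $x,\nu,\ol{\kappa}$ are permanent cycles, the algebraic Massey product $\langle x,\nu,\ol{\kappa}\rangle$ on the appropriate page of the Adams--Novikov spectral sequence contains $x\cdot\Delta$ in filtration $0$. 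Moss's theorem then implies that the Toda bracket in $\pi_\ast\tmf$ contains an element detected by $x\cdot\Delta$, namely $8\Delta$ in (1) and $\eta\Delta, 2\nu\Delta, \epsilon\Delta, \eta\kappa\Delta, \eta\ol{\kappa}\Delta$ respectively in (2)--(6).

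The final step is the indeterminacy analysis. The indeterminacy of $\langle x,\nu,\ol{\kappa}\rangle$ in $\pi_\ast\tmf$ is $x\cdot\pi_{24}(\tmf)+\pi_{|x|+4}(\S)\cdot\ol{\kappa}$. For (1) this contains $8\pi_{24}(\tmf)$, which is nonzero and explains why the statement reads ``$\in$''. For (2)--(6) we must show that both summands vanish. The sphere-side summand lives in low stems, where the generators can be checked to pair trivially with $\ol{\kappa}$ using the $E_2$-page relations $\eta\ol{\kappa}, \nu\ol{\kappa}=0$ and the known $\ol{\kappa}$-action; the $\tmf$-side requires enumerating the generators of $\pi_{24}(\tmf)$ and verifying that each is annihilated by the corresponding $x\in\{\eta,2\nu,\epsilon,\eta\kappa,\eta\ol{\kappa}\}$, using the defining relations on the $E_2$-page (e.g.\ $\eta\nu=0$, $\nu^3=\eta\epsilon$, $\nu^2\kappa=4\ol{\kappa}$, $\eta^2\kappa=0$). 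The main obstacle is exactly this case-by-case indeterminacy bookkeeping: it is routine in principle but must be done carefully, especially for (5) and (6), where several nearby classes in $\pi_{25}\tmf, \pi_{28}\tmf, \pi_{29}\tmf, \pi_{44}\tmf, \pi_{45}\tmf$ need to be ruled out by invoking the explicit multiplicative structure coming from \cite{bauer-tmf}.
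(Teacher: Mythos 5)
Your proposal recovers exactly the argument the paper gives in the text preceding the proposition: everything is driven by $d_5(\Delta)=\ol{\kappa}\nu$ from Theorem~\ref{d5}, the spherical relations $\eta\nu=0$, $2\nu^2=0$, $\epsilon\nu=0$ (and, for (1), $8\nu=0$), and the realization of the bracket as a composite $S^{24+|x|}\to\Sigma^{20}C\nu\xrightarrow{\ol{\kappa}}\tmf$, with Moss's convergence theorem supplying the identification of that composite with $x\Delta$. The indeterminacy bookkeeping is asserted rather than carried out in both your write-up and the paper, so there is no discrepancy on that point; just note that you should verify the crossing-differential hypothesis of Moss's theorem if you wanted to make the identification airtight, and that the paper gets away with stating the claim because it views these brackets as a reformulation of the already-established ANSS calculation for $\tmf$ rather than as new content.
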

To describe the other elements in $\pi_\ast \tmf$, we adopt a slightly different
approach from Section \ref{anss-tmf} --- we will not bother writing down all the
generators of $\pi_\ast \tmf$ as Toda brackets of spherical elements unless it
is convenient/necessary to do so. This is only to streamline exposition,
although one can of course work this out at one's own leisure; see Remark
\ref{james}.

The $d_5$-differential on $\Delta$ forces a differential $d_5(\Delta^k) =
k\Delta^{k-1} \ol{\kappa}\nu$. The $d_7$-differential $d_7(\Delta^4) =
\Delta^3\ol{\kappa}\eta^3$ now implies that the classes $\{\Delta^{8k},
2\Delta^{8k+4}, 4\Delta^{4k+2}, 8\Delta^{2k+1}\}$ survive to the $E_8 =
E_9$-page. In fact, these are permanent cycles. A simple induction on $k$
shows:
\begin{prop}\label{toda-2}
    Up to units, we have
    \begin{enumerate}
	\item $\Delta^{8k}\in \langle 2, \Delta^{8k-1} \eta^3,
	    \ol{\kappa}\rangle$ with indeterminacy $2\Z_{(2)}\{\Delta^{8k}\}$;
	\item $2\Delta^{8k+4} \in \langle 2, \Delta^{8k+3} \eta^3,
	    \ol{\kappa}\rangle$ with indeterminacy $2\Z_{(2)}\{2\Delta^{8k+4}\}$;
	\item $4\Delta^{4k+2} \in \langle 2, 2\Delta^{4k+1} \nu,
	    \ol{\kappa}\rangle$ with indeterminacy $2\Z_{(2)}\{4\Delta^{4k+2}\}$;
	\item $8\Delta^{2k+1} \in \langle 8, \Delta^{2k} \nu,
	    \ol{\kappa}\rangle$ with indeterminacy $8\Z_{(2)}\{8\Delta^{2k+1}\}$.
    \end{enumerate}
\end{prop}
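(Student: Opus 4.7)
The plan is to prove all four identities by induction on $k$, using Moss's theorem to translate the Adams--Novikov differentials $d_5(\Delta) = \ol{\kappa}\nu$ and $d_7(4\Delta) = \ol{\kappa}\eta^3$ (extended by Leibniz to $d_7(\Delta^4) = \Delta^3 \ol{\kappa}\eta^3$) into Toda bracket relations in $\pi_\ast\tmf$. The key vanishings that make each bracket in the proposition defined are $\ol{\kappa}\nu = 0$ (Theorem \ref{d5}) and $\ol{\kappa}\eta^3 = 0$ in $\pi_\ast\tmf$ (the latter because $\ol{\kappa}\eta^3$ is a $d_7$-boundary), together with the spherical relations $2\eta = 0$ and $4\nu = 0$.

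For the base cases, statement (4) at $k = 0$ is exactly Proposition \ref{toda-1}(1). For (3) at $k = 0$, the element $2\Delta\nu \in \pi_\ast\tmf$ coming from Proposition \ref{toda-1}(3) satisfies $2 \cdot 2\Delta\nu = 4\Delta\nu = 0$ (using $4\nu = 0$) and $(2\Delta\nu)\ol{\kappa} = 2\Delta \cdot \nu\ol{\kappa} = 0$, so the bracket $\langle 2, 2\Delta\nu, \ol{\kappa}\rangle$ is defined; applying Moss to $d_5(\Delta^2) = 2\Delta\ol{\kappa}\nu$ then identifies $4\Delta^2$, up to a $2$-local unit, with an element of this bracket. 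Statements (1) and (2) at their smallest $k$ (giving $\Delta^8$ and $2\Delta^4$) are handled in the same Moss-style way using the $d_7$-differential on $\Delta^4$ and the relation $2\eta = 0$.

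For the inductive step I would use that $\Delta^8$ is a permanent cycle, verified by $d_5(\Delta^8) = 8\Delta^7 \ol{\kappa}\nu = 0$ (since $8\nu = 0$) and $d_7(\Delta^8) = 2\Delta^7 \ol{\kappa}\eta^3 = 0$ (since $2\eta = 0$), with no further differentials possible. The standard Toda bracket shift $\Delta^8 \cdot \langle \alpha, \beta, \gamma\rangle \subseteq \langle \alpha, \Delta^8 \beta, \gamma\rangle$ then produces the identity for the next value of $k$. For (1) and (2) this implements $k \mapsto k+1$; for (3) it implements $k \mapsto k+2$, requiring two base cases ($k = 0, 1$); for (4) it implements $k \mapsto k+4$, requiring four base cases ($k = 0, 1, 2, 3$). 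The additional base cases are established by the same Moss-style recipe applied to the appropriate $d_5$ or $d_7$ differential.

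Finally, the indeterminacy is checked directly: in each case the defining subgroup $\alpha \pi_\ast(\tmf) + \pi_\ast(\S) \gamma$ reduces in the relevant degree to the claimed form, using that the pertinent sphere homotopy groups either vanish or contribute only to already-included multiples. The ``up to units'' qualifier absorbs the $2$-local unit from Moss's identification, since if $u \equiv 1 \pmod{2}$ then $ux$ and $x$ differ by an element of $2\Z_{(2)}\{x\}$, which lies in the indeterminacy. The main obstacle, in my view, will be verifying the hypotheses of Moss's theorem in each base case --- specifically, the absence of crossing differentials --- which requires a careful pass through the ANSS chart for $\tmf$ in the relevant stems to confirm that no shorter differential intervenes between the Massey product filtration and the target class.
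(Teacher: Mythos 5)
Your proposal is essentially the argument the paper has in mind, fleshed out where the text simply says ``a simple induction on $k$ shows.''  The core moves --- read off the $d_5$- and $d_7$-differentials on powers of $\Delta$ via Leibniz, apply a Moss-type convergence argument to produce the Toda brackets, and propagate by multiplying by the permanent cycle $\Delta^8$ --- are exactly what the paper is gesturing at, and your count of base cases ($k\mapsto k+1$ for (a),(b), $k\mapsto k+2$ for (c), $k\mapsto k+4$ for (d)) is correct.  Two points you should nail down.  First, the shift $\Delta^8\cdot\langle\alpha,\beta,\gamma\rangle\subseteq\langle\alpha,\Delta^8\beta,\gamma\rangle$ is not itself one of the standard juggling lemmas; it follows by composing $\Delta^8\langle\alpha,\beta,\gamma\rangle\subseteq\langle\Delta^8\alpha,\beta,\gamma\rangle$ (left multiplication) with $\langle\alpha\Delta^8,\beta,\gamma\rangle\subseteq\langle\alpha,\Delta^8\beta,\gamma\rangle$ (the associativity inclusion from Toda, Prop.~1.2(iii)), using that $\alpha\in\{2,8\}$ is a central scalar --- you should say so explicitly rather than label it ``standard.''  Second, for the brackets in (a) and (b) to even be defined the middle entries $\Delta^{8k-1}\eta^3$ and $\Delta^{8k+3}\eta^3$ must be actual elements of $\pi_\ast\tmf$, and this is a nontrivial input your write-up treats as given.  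Since $\eta^3=4\nu$ on the $E_2$-page, this reduces to checking that $4\Delta^{8k-1}\nu$ and $4\Delta^{8k+3}\nu$ are permanent cycles, which falls out of the same $d_5$/$d_7$ analysis you already use (via $2\nu^2=0$ and $2\eta=0$); that check belongs in your base cases.  With those two clarifications the proof is complete, and the concern you raise about crossing differentials is the right thing to be uneasy about, though a pass through the chart in the relevant stems shows it does not bite.
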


We now turn to the other generators of $\pi_\ast \tmf$, listed in \cite[Figure
1.2]{mark-handbook}.
\begin{prop}\label{toda-3}
    We have the following Toda brackets in $\pi_\ast \tmf$, each without any
    indeterminacy:
    \begin{enumerate}
	\item $\Delta^2 \nu = \langle \nu, 2\nu \Delta, \ol{\kappa}\rangle$;
	\item $\Delta^4 \eta = \langle \eta, \Delta^3 \eta^3,
	    \ol{\kappa}\rangle$;
	\item $\Delta^4 \nu = \langle \nu, \Delta^3 \eta^3,
	    \ol{\kappa}\rangle$;
	\item $\Delta^4 \epsilon = \langle \epsilon, \Delta^3 \eta^3,
	    \ol{\kappa}\rangle$;
	\item $\Delta^4 \kappa = \langle \kappa, 4\nu, 3\nu, 2\nu, \nu,
	    \ol{\kappa}^4\rangle$;
	\item $2\Delta^5 \nu = \langle 2\nu, \Delta^4 \nu, \ol{\kappa}\rangle$;
	\item $\Delta^5 \epsilon = \langle \epsilon, \Delta^4 \nu,
	    \ol{\kappa}\rangle$;
	\item $\Delta^6\nu = \langle \nu, 2\Delta^5\nu, \ol{\kappa}\rangle$;
    \end{enumerate}
\end{prop}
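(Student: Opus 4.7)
The plan is to derive each of the eight Toda brackets from the Adams-Novikov differential structure of Theorem \ref{d5} and Proposition \ref{toda-2} via Moss's convergence theorem: a differential $d_r(x) = ab$ in the ANSS for $\tmf$, with $a, b$ permanent cycles, witnesses a Massey product $\langle -, a, b\rangle$ on the $E_r$-page that converges to a Toda bracket in $\pi_\ast \tmf$ taking the corresponding value. Each item will fall out once one (i) identifies the differential that kills the two pairwise products, (ii) verifies well-definedness of the bracket, and (iii) reads off the indeterminacy from \cite[Figure 1.2]{mark-handbook}.

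For items (1), (3), (6), and (8), I would use the family of $d_5$-differentials $d_5(\Delta^k) = k\Delta^{k-1}\ol{\kappa}\nu$ to force the rightmost pairing of each bracket to vanish in $\pi_\ast \tmf$; the leftmost pairing vanishes spherically via $2\nu^2 = 0$ or $\eta\nu = 0$. Each bracket is then well-defined, and Moss identifies it with the listed $\Delta$-multiple. Items (2), (4), and (7) run analogously using the $d_7$-differential structure $d_7(4\Delta^{4k+3}) = \Delta^{4k+2}\ol{\kappa}\eta^3$ coming from Proposition \ref{toda-2}, which forces $\Delta^3\eta^3 \cdot \ol{\kappa} = 0$ on a suitable page; the first pairing is cleared by $\eta\cdot\eta^3 = \eta^4 = 0$ in $\pi_\ast \S$, with any residual extension in $\pi_\ast \tmf$ handled through the defining relation $\ol{\kappa}c_4 = \eta^4\Delta$. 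Absence of indeterminacy in each case follows by comparing the candidate contributions $a\cdot \pi_\ast \tmf$ and $\pi_\ast \S \cdot c$ in the ambient stems against the list of generators in \cite[Figure 1.2]{mark-handbook}: the relevant stems of $\pi_\ast \tmf_{(2)}$ are either trivial or contain only the expected generator.

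The main obstacle is item (5), the six-fold bracket $\Delta^4\kappa = \langle \kappa, 4\nu, 3\nu, 2\nu, \nu, \ol{\kappa}^4\rangle$. My plan is to construct it iteratively from three-fold brackets of the sort treated in Proposition \ref{toda-1}: the innermost expression $\langle \nu, \ol{\kappa}, -\rangle$ captures $\Delta$ up to filtration, and nesting successively with $2\nu$, $3\nu$, $4\nu$ on the outside peels off $\Delta^2, \Delta^3, \Delta^4$ by exploiting $d_5(\Delta^k) = k\Delta^{k-1}\ol{\kappa}\nu$ together with the spherical relations $(m\nu)\cdot((m{+}1)\nu) = 0$ modulo $2$. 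The outermost $\kappa$ absorbs $4\nu$ via $4\kappa = 0$ and $\kappa\nu = 0$. The main technical difficulty is bookkeeping the nested indeterminacies and justifying the higher-fold analogue of Moss's theorem in this setting; the collapse of indeterminacy reduces to a sparsity check, since each intermediate partial bracket lives in a stem of $\pi_\ast \tmf$ containing at most the expected generator.
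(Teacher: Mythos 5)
Your overall strategy---read each bracket off the Adams--Novikov differentials for $\tmf$ from Theorem \ref{d5} via Moss's theorem, then check well-definedness and indeterminacy---is exactly what the paper does, and it is the right framework. But the bookkeeping as written has concrete errors that would sink the argument if carried through.

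First, items (3) and (7) are in the wrong groups. The mechanism is determined by the \emph{middle} entry of the bracket. Item (3) is $\langle\nu, \Delta^3\eta^3, \ol{\kappa}\rangle$; the middle term $\Delta^3\eta^3$ arises from $d_7(\Delta^4)=\Delta^3\ol{\kappa}\eta^3$, so (3) belongs with the $d_7$ group, not $d_5$. Item (7) is $\langle\epsilon, \Delta^4\nu, \ol{\kappa}\rangle$; the middle term $\Delta^4\nu$ arises from $d_5(\Delta^5)=5\Delta^4\ol{\kappa}\nu$, so (7) belongs with the $d_5$ group, not $d_7$; its first pairing vanishes because $\epsilon\nu=0$ in $\pi_\ast\S$, not because of any $\eta^4$ relation. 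Relatedly, the $d_7$-differential you invoke, $d_7(4\Delta^{4k+3})=\Delta^{4k+2}\ol{\kappa}\eta^3$, is not the one used: for items (2), (3), (4) the paper's relevant differential is $d_7(\Delta^4)=\Delta^3\ol{\kappa}\eta^3$. Also, stating that the first pairing in the $d_7$ cases "is cleared by $\eta\cdot\eta^3=\eta^4=0$" only addresses item (2); items (3), (4), and (7) need $4\nu^2=0$, $\epsilon\eta^3=4\epsilon\nu=0$, and $\epsilon\nu=0$ respectively (using $\eta^3=4\nu$).

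Second, your plan for item (5) is substantially harder than you allow. The paper does not derive the six-fold bracket $\Delta^4\kappa=\langle\kappa,4\nu,3\nu,2\nu,\nu,\ol{\kappa}^4\rangle$ from the ANSS differentials; it simply cites \cite[Corollary 8.7]{bauer-tmf}, where this appears as the element $e[110,2]$. Your proposed iterative nesting from three-fold brackets runs into exactly the issue you flag---justifying a higher-fold convergence theorem and controlling nested indeterminacy---and nothing in your sketch resolves it. The paper's Remark \ref{james} does give a geometric interpretation of these higher brackets via the James filtration of $\Omega S^5$ and iterated $\ol{\kappa}$-multiplication, but even there the bracket for item (5) is imported from Bauer rather than reproved. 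You should either cite Bauer directly for (5) or flesh out the James-filtration argument; the inductive three-fold scheme as stated is not a proof.
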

\begin{remark}\label{products}
    We have excluded those elements which can be derived using the
    multiplicative structure. All other elements (except for $c_4 \Delta^k$ and
    $2c_6 \Delta^k$) can be expressed as products of the elements listed in
    Propositions \ref{toda-1}, \ref{toda-2}, and \ref{toda-3}. Importantly, the
    proofs of these propositions \emph{only} use $\ol{\kappa}\nu = 0$ in
    $\pi_\ast \tmf$ (via Theorem \ref{d5}) and multiplicative relations in
    the sphere.
\end{remark}
\begin{remark}\label{james}
    There are a lot of interesting multiplicative extensions, described in
    \cite[Section 8]{bauer-tmf}, but we will not need them.  Each of these
    relations can be derived essentially only using the $d_5$-differential of
    Theorem \ref{d5} and the multiplicative structure in the homotopy of the
    sphere.

    We can recast these extensions from the following perspective. The spectrum
    $C\nu$ is the Thom spectrum of the Spin-bundle over $S^4$ determined by the
    generator of $\pi_4 \BSpin$. Since $\BSpin$ is an infinite loop space, this
    bundle extends to one over $\Omega S^5$, and hence over the intermediate
    James constructions $J_k(S^4)$ for all $k\geq 1$. Let $J_k(S^4)^\mu$ denote
    the Thom spectrum of this bundle, so $J_1(S^4)^\mu = C\nu$. Since
    $\{J_k(S^4)\}$ forms a filtered $\E{1}$-space, we obtain a map $C\nu^{\wedge
    k} \to J_k(S^4)^\mu$. Taking the product of $\ol{\kappa}:\Sigma^{20} C\nu\to
    \tmf$ with itself $k$ times defines a map
    $$\ol{\kappa}^k: \Sigma^{20k} J_k(S^4)^\mu \to \Sigma^{20k} J_k(S^4)^\mu
    \wedge \tmf \to \tmf.$$
    Suppose $x\in \pi_\ast \S$ lifts to a map $S^{4k + |x|}\to J_k(S^4)^\mu$
    which given by $x$ on the top ($4k$-dimensional) cell of $J_k(S^4)^\mu$.
    Then the composite $S^{4k + |x|}\to J_k(S^4)^\mu \wedge \tmf$ defines an
    element of the form $x \Delta^k\in \pi_{24k + |x|} \tmf$.  For instance, we
    have:
    \begin{enumerate}
	\item $\Delta^2 \nu\in \langle \nu, 2\nu, \nu, \ol{\kappa}^2\rangle$;
	\item $\Delta^4 \eta\in \langle \eta, 4\nu, 3\nu, 2\nu, \nu,
	    \ol{\kappa}^4\rangle$;
	\item $\Delta^4\nu\in \langle \nu, 4\nu, 3\nu, 2\nu, \nu,
	    \ol{\kappa}^4\rangle$;
	\item $\Delta^4 \epsilon\in \langle \epsilon, 4\nu, 3\nu, 2\nu, \nu,
	    \ol{\kappa}^4\rangle$;
	\item $\Delta^4 \kappa\in \langle \kappa, 4\nu, 3\nu, 2\nu, \nu,
	    \ol{\kappa}^4\rangle$;
	\item $2\Delta^5 \nu\in \langle 2\nu, 5\nu, 4\nu, 3\nu, 2\nu, \nu,
	    \ol{\kappa}^5\rangle$;
	\item $\Delta^5 \epsilon\in \langle \epsilon, 5\nu, 4\nu, 3\nu, 2\nu,
	    \nu, \ol{\kappa}^5\rangle$.
    \end{enumerate}
    The brackets in (b), (c), and (e) appear in \cite[Corollary 8.7]{bauer-tmf}.
    The others may also be obtained by arguing as Bauer does: they are
    consequences of the bracket $\ol{\kappa} = \langle \nu, 2\nu, 3\nu, 4\nu,
    \nu, \eta\rangle = \langle \nu, 2\nu, 3\nu, 4\nu, \eta, \nu\rangle$ in
    $\pi_\ast \tmf$ (no indeterminacy), stated as \cite[Lemma 8.6]{bauer-tmf}.
\end{remark}
\begin{remark}
    Mark Behrens pointed out to us that Mahowald expected $\ol{\kappa}^7 = 0$ in
    $\pi_\ast \S_{(2)}$ (it is known that $\ol{\kappa}^6 = 0$ in $\pi_\ast
    \tmf_{(2)}$). It would be interesting to know whether this is related to the
    existence of $\Delta^8$ in $\pi_\ast \tmf$ via the approach given in Remark
    \ref{james}.
\end{remark}
Finally, we prove Proposition \ref{toda-3}.
\begin{proof}[Proof of Proposition \ref{toda-3}]
    We prove this case-by-case.
    \begin{enumerate}
	\item Since $d_5(\Delta^2) = 2\Delta \ol{\kappa}\nu$ and $2\nu^2 = 0$ in
	    the ANSS for the sphere, we find that $\Delta^2 \nu\in \langle \nu,
	    2\nu \Delta, \ol{\kappa}\rangle$. We provide the argument for
	    indeterminacy in this case, but not for the others since the
	    argument is essentially the same. The indeterminacy lives in
	    $\ol{\kappa}\pi_{31} \tmf + \nu \pi_{48} \tmf$, but
	    $\ol{\kappa}\pi_{31} \tmf \cong \nu \pi_{48} \tmf \cong 0$.
	\item Since $d_7(\Delta^4) = \Delta^3 \ol{\kappa}\eta^3$, we have
	    $d_7(\Delta^4 \eta) = \Delta^3 \ol{\kappa}\eta^4 = 0$. Therefore,
	    $\Delta^4 \eta\in \langle \eta, \Delta^3 \eta^3,
	    \ol{\kappa}\rangle$. This bracket is well-defined because $\Delta^3
	    \eta^3 = 4\Delta^3 \nu$ exists in $\pi_\ast \tmf$, $\eta \nu = 0$ in
	    the sphere, and $\ol{\kappa}\eta^3 = 0$ in $\tmf$.
	\item Similarly, since $d_7(\Delta^4) = \Delta^3 \ol{\kappa}\eta^3$, we
	    have $d_7(\Delta^4\nu) = \Delta^3 \ol{\kappa}\eta^3 \nu = 0$.
	    Therefore $\Delta^4 \nu\in \langle \nu, \Delta^3 \eta^3,
	    \ol{\kappa}\rangle$. This bracket is well-defined because $\Delta^3
	    \eta^3$ exists in $\pi_\ast \tmf$, $\eta\nu = 0$ in the sphere, and
	    $\ol{\kappa}\eta^3$ vanishes in $\tmf$.
	\item Similarly, since $d_7(\Delta^4) = \Delta^3 \ol{\kappa}\eta^3$, we
	    have $d_7(\Delta^4\epsilon) = \Delta^3 \ol{\kappa}\eta^3\epsilon =
	    0$, since $2\epsilon = 0$. Therefore, $\Delta^4 \epsilon\in \langle
	    \epsilon, \Delta^3 \eta^3, \ol{\kappa}\rangle$. This bracket is
	    again well-defined.
	\item This is in \cite[Corollary 8.7]{bauer-tmf}, where $\Delta^4
	    \kappa$ is denoted $e[110,2]$.
	\item Since $d_5(\Delta^5) = 5\Delta^4 \ol{\kappa}\nu$, we have
	    $d_5(2\Delta^5\nu) = 10\Delta^4 \ol{\kappa}\nu^2 = 0$, since $2\nu^2
	    = 0$. It follows that $2\Delta^5 \nu \in 5\langle 2\nu,
	    \Delta^4 \nu, \ol{\kappa}\rangle$. This is well-defined because
	    $\Delta^4\nu$ lives in $\pi_\ast \tmf$, $2\nu^2 = 0$ in the sphere,
	    and $\ol{\kappa}\nu = 0$ in $\tmf$.
	\item Similarly, since $d_5(\Delta^5) = 5\Delta^4 \ol{\kappa}\nu$, we
	    have $d_5(\Delta^5\epsilon) = 5\Delta^4 \ol{\kappa}\nu\epsilon = 0$,
	    because $\epsilon\nu = 0$. It follows that $\Delta^5 \epsilon\in 5
	    \langle \epsilon, \Delta^4 \nu, \ol{\kappa}\rangle$, which is
	    well-defined because $\Delta^4 \nu$ lives in $\pi_\ast \tmf$,
	    $\epsilon\nu = 0$ in the sphere, and $\ol{\kappa}\nu = 0$ in $\tmf$.
	\item Since $d_5(\Delta^6) = 6\Delta^5 \ol{\kappa}\nu$, we have
	    $d_5(\Delta^6\nu) = 6\Delta^5 \ol{\kappa}\nu^2 = 0$. We therefore
	    have $\Delta^6\nu \in 3\langle \nu, 2\Delta^5 \nu,
	    \ol{\kappa}\rangle$. This is well-defined because $2\nu \Delta^5$
	    lives in $\pi_\ast \tmf$, $2\nu^2 = 0$ in the sphere, and
	    $\ol{\kappa}\nu = 0$ in $\tmf$.
    \end{enumerate}
\end{proof}

\subsection{The Adams-Novikov spectral sequence for $B$}\label{anss-B}

In this section, we analyze the ring map $B\to \tmf$, and show that the
generators of $\pi_\ast \tmf_{(2)}$ lift to $\pi_\ast B_{(2)}$. Again, we will
localize at $p=2$ throughout.

We begin by showing:
\begin{prop}\label{Delta}
    There is an element in the $0$-line of the $E_2$-page of the ANSS for $B$
    which lifts the element $\Delta$ in the $E_2$-page of the ANSS for $\tmf$.
\end{prop}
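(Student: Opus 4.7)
The plan mirrors that of Proposition \ref{delta-lift}, now at the prime $2$. The goal is to exhibit an explicit polynomial in $b_4, y_6 \in \BPP_\ast(B)$ whose image, in a convenient Weierstrass presentation of the elliptic curve Hopf algebroid, is the discriminant $\Delta$. Because $\Delta$ is primitive in that Hopf algebroid (it is a modular form), producing such a polynomial lift will place $\Delta$ in the $0$-line of the ANSS for $B$.

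At $p=2$, the prime $3$ is a unit, so (as in \cite{bauer-tmf, mark-handbook}) every Weierstrass elliptic curve over a $\Z_{(2)}$-algebra is isomorphic to one of the reduced form $y^2 + a_1 xy + a_3 y = x^3$; write $(A', \Gamma')$ for the associated Hopf algebroid, which underlies Bauer's description of $\tmf_1(3)_{(2)}$. A direct Weierstrass computation gives $b_2 = a_1^2$, $b_4 = a_1 a_3$, $b_6 = a_3^2$, $b_8 = 0$, and hence
\[\Delta = a_1^3 a_3^3 - 27 a_3^4\]
in degree $24$, with $|a_1| = 2$ and $|a_3| = 6$.

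I would then trace the images of the generators $b_4, y_6 \in \BPP_\ast(B)$ under the Hopf algebroid map $(\BPP_\ast B, \BPP_\ast \BPP \otimes_{\BPP_\ast} \BPP_\ast B) \to (A', \Gamma')$ induced by the composite $B \to \tmf \to \tmf_1(3)_{(2)}$. The degrees line up: $|b_4| = 8 = |a_1 a_3|$ and $|y_6| = 12 = |a_3^2|$. Combining Proposition \ref{bp-homology} (which identifies the images of $b_4, y_6$ in $\BPP_\ast \BPP$ as $t_1^4$ and $t_2^2$ modulo decomposables) with a Sage calculation of the $2$-typical $2$-series of the formal group of $y^2 + a_1 xy + a_3 y = x^3$, analogous to the identification of $v_1$ with $a_2$ at $p=3$, one expects the assignments $b_4 \mapsto a_1 a_3$ and $y_6 \mapsto a_3^2$ modulo decomposables. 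Substituting into the formula for $\Delta$ yields the candidate lift
\[b_4^3 - 27 y_6^2 \;\equiv\; \Delta \pmod{\text{decomposables}}\]
in $\BPP_\ast(B)$.

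The main obstacle is controlling the decomposable corrections precisely enough to produce an exact polynomial lift, rather than merely a mod-decomposables statement. As at $p=3$, this reduces to a direct Weierstrass/Sage computation: the corrections appear as polynomial combinations of $v_1, v_2, b_4, y_6$, all of which already live in $\BPP_\ast(B)$. Once the lift is made exact, the fact that its image is the primitive $\Delta$ in $A'$, combined with a direct check on the coaction of $b_4$ and $y_6$, shows the polynomial lift is primitive in $\BPP_\ast(B)$, placing it in the $0$-line of the ANSS for $B$.
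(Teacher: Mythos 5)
Your proposal matches the paper's proof essentially step for step: pass to the reduced Weierstrass form $y^2 + a_1 xy + a_3 y = x^3$ (valid $2$-locally since $3$ is a unit), compute $\Delta = a_1^3 a_3^3 - 27 a_3^4 = b_4^3 - 27 b_6^2$, and use the identification $b_4 \mapsto b_4$, $y_6 \mapsto b_6$ modulo decomposables coming from Proposition \ref{bp-homology}. The paper simply concludes at that point; your added discussion of pinning down the decomposable corrections and verifying primitivity is more cautious than what the paper records, but it is the same argument.
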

\begin{proof}
    We begin by recalling a representative for $\Delta$ in the cobar complex for
    $\tmf$ at $p=2$. Recall from Proposition \ref{delta-lift} that the Hopf
    algebroid $(\BPP_\ast \tmf, \BPP_\ast \BPP\otimes_{\BPP_\ast} \BPP_\ast
    \tmf)$ is isomorphic to the elliptic curve Hopf algebroid $(A, \Gamma)$
    presenting the moduli stack of cubic curves. As in the $3$-complete setting
    (studied in Proposition \ref{delta-lift}), it is known that upon
    $2$-completion, every elliptic curve in Weierstrass form is isomorphic to
    one of the form
    $$y^2 + a_1 xy + a_3 y = x^3.$$
    Consequently (as in the $3$-complete setting), the elliptic curve Hopf
    algebroid is isomorphic to a Hopf algebroid of the form $(A', \Gamma') =
    (\Z_2[a_1, a_3], A'[s,t]/I)$, where $I$ is some ideal consisting of
    complicated relations, and where the Hopf algebroid structure can be written
    down explicitly (as in \cite[Section 3]{bauer-tmf}). A straightforward
    calculation proves that the discriminant is then
    \begin{equation}\label{Delta-2}
        \Delta = a_1^3 a_3^3 - 27 a_3^4 = b_4^3 - 27 b_6^2.
    \end{equation}
    Turning to $B$, recall that we may identify $\BPP_\ast B$ with
    $\BPP_\ast[b_4, y_{6}]$. The map $B\to \tmf$ induces a map $(\BPP_\ast B,
    \BPP_\ast \BPP \otimes_{\BPP_\ast} \BPP_\ast B) \to (A', \Gamma')$ of Hopf
    algebroids that sends $b_4$ to $b_4$ and $y_{6}$ to $b_6$ mod decomposables.
    It follows from Equation \eqref{Delta-2} that the element $\Delta$ already
    exists in the $0$-line of the Adams-Novikov spectral sequence for $B$.
    This finishes the proof of Proposition \ref{Delta}.
\end{proof}

Since the map $B\to \tmf$ is an equivalence in dimensions $\leq 12$ (Corollary
\ref{12-equiv}), the elements $c_4$ and $2c_6$ lift to $\pi_\ast B$. We claim
that  $c_4 \Delta^k$ and $2c_6 \Delta^k$ live in $\pi_\ast B$; to show this, we
argue as in Remark \ref{c4-2c6}. There is a map $B\to B\wedge DA_1 \simeq T(2)$
(see also Remark \ref{B-wood}), and there is a particular complex orientation of
$\tmf_1(3)$ exhibiting it as a form of $\BP{2}$, which sits in a commutative
diagram
$$\xymatrix{
    B \ar[r] \ar[d] & T(2) \ar[d] \ar[r] & \BPP \ar[dl]\\
    \tmf \ar[r] & \tmf_1(3). &
}$$
There are choices of indecomposables $v_1$ and $v_2$ producing an isomorphism
$\pi_\ast \tmf_1(3) \cong \Z_2[v_1, v_2]$ such that $c_4$ is sent to $v_1^4$ and
$\Delta$ is sent to $v_2^4$. The map $T(2) \to \tmf_1(3)$ is surjective on
homotopy, since $v_1$ and $v_2$ live in $\pi_\ast T(2)$. Since the elements
$c_4$, $2c_6$, $c_4 \Delta^k$, and $2c_6 \Delta^k$ for $k\geq 1$ therefore
already live in the homotopy of $T(2)$, we find by the same argument that these
elements already live in the homotopy of $B$.

We next turn to showing that the other elements of $\pi_\ast \tmf$ lift to
$\pi_\ast B$. The following is the $2$-local analogue of Theorem \ref{d5-B}:
\begin{theorem}\label{d5-B-2}
    There are differentials $d_5(\Delta) = \ol{\kappa}\nu$ and $d_7(4\Delta) =
    \ol{\kappa} \eta^3$ in the ANSS for $B$. Moreover, $\ol{\kappa}\nu = 0$ in
    $\pi_\ast B$, and $8\Delta$ is a permanent cycle.
\end{theorem}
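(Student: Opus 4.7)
The plan is to mimic Theorem \ref{d5-B}, using naturality along the $\E{1}$-ring map $B\to \tmf$ (combined with Theorem \ref{d5}) together with a cell-wise filtration analysis of the Adams--Novikov spectral sequence for $B$. The work lies in showing that $\Delta$ and $4\Delta$ survive to the $E_5$- and $E_7$-pages respectively, identifying the claimed differential targets as nonzero on those pages, and ruling out multiplicative extensions that could resurrect $\ol{\kappa}\nu$ in $\pi_\ast B$.

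The first step is a filtration bound in stem $23$ of the ANSS for $B$. By Proposition \ref{bp-homology}, $B$ has cells only in dimensions $8m+12n$, so the algebraic Atiyah--Hirzebruch spectral sequence identifies every class in stem $23$ of the ANSS for $B$ as arising from one of the sphere stems $23,15,11,7,3$ (corresponding to cells of dimensions $0,8,12,16,20$). Consulting Isaksen's charts for the sphere at $p=2$, one verifies that the only classes of ANSS filtration $\geq 5$ in those stems surviving the relevant AHSS differentials are $\ol{\kappa}\nu$ in $(23,5)$ and $\ol{\kappa}\eta^3$ in $(23,7)$, both arising from the bottom cell; in particular the bidegrees $(23,r)$ are empty for $r\in\{2,3,4,6\}$. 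With this bound in hand, the argument parallels Theorem \ref{d5-B}. Emptiness of $(23,2),(23,3),(23,4)$ allows $\Delta$ to survive to the $E_5$-page, and naturality along $B\to\tmf$ then forces $d_5(\Delta)=\ol{\kappa}\nu$. Since $4\nu=0$ on the $E_2$-page, $4\Delta$ survives this differential; emptiness of $(23,6)$ carries it to $E_7$, where naturality forces $d_7(4\Delta)=\ol{\kappa}\eta^3$, encoding the spherical extension $4\nu=\eta^3$. Because $2\eta=0$ on the $E_2$-page, $8\Delta$ then survives $d_7$, and naturality combined with the permanent cycle status of $8\Delta$ in $\pi_\ast\tmf$ rules out longer differentials. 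Finally, the vanishing of $\ol{\kappa}\nu$ in $\pi_\ast B$ follows verbatim from the Theorem \ref{d5-B} argument: any multiplicative extension would require a class in stem $23$ of filtration strictly above $5$, contradicting the bound.

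The main obstacle is the filtration bookkeeping at $p=2$, since the sphere's ANSS is considerably busier in low stems than at $p=3$. One must carefully inspect the contributions from the cells of $B$ of dimensions $8,12,16,20$ (detected by $b_4,y_6,b_4^2,b_4y_6$) to confirm emptiness of $(23,r)$ for $r\in\{2,3,4,6\}$, and simultaneously ensure that the target classes $\ol{\kappa}\nu$ and $\ol{\kappa}\eta^3$ are not themselves killed by AHSS differentials originating on higher cells. A secondary subtlety is verifying that the map of spectral sequences for $B\to\tmf$ detects $\ol{\kappa}\nu$ and $\ol{\kappa}\eta^3$ nontrivially on the corresponding pages, so that naturality actually transports the differentials; this reduces to the explicit description of $\BPP_\ast(B)\to\BPP_\ast(\tmf)$ coming from the Weierstrass parameters computed in Proposition \ref{Delta}.
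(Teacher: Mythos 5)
Your proposal takes a genuinely different route from the paper, and the difference is where the real work lies. The paper does \emph{not} attempt a direct filtration census of stem $23$ of the ANSS for $B$. Instead, it reduces the $d_5$-differential to the vanishing of $\ol{\kappa}\nu^2$ in $\pi_\ast B$: by $\nu$-linearity it suffices to show $d_5(\Delta\nu) = \ol{\kappa}\nu^2$, and after a much lighter AHSS check in stem $27$ (ruling out $d_2$ and $d_4$, then using naturality along $B\to\tmf$ to kill the remaining $d_3$), this differential is forced once one knows $\ol{\kappa}\nu^2 = 0$ in $\pi_\ast B$. The vanishing of $\ol{\kappa}\nu^2$ is the heart of the matter, and the paper establishes it via the Toda bracket $\ol{\kappa}\nu^2 \in \langle \eta_4\sigma, \eta, 2\rangle$ from Hopkins--Mahowald's EO(2) paper together with the $\sigma$-extension $\eta_4\cdot\sigma = h_4 c_0$, reducing everything to showing $h_4 c_0 = 0$ in $\pi_{23}(B)$; this in turn is immediate from the $\sigma$-attaching map of the $8$-cell of $B$ (Figure \ref{B-cell}), which kills $h_1 h_4$ by a $d_1$ in the algebraic AHSS. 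The cell structure of $B$ is thus doing the real work, and your proposal does not engage with it at all.

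Your plan is to mimic the $p=3$ argument of Theorem \ref{d5-B} verbatim: show $(23,r)$ is empty for $r\in\{2,3,4,6\}$ and that nothing of filtration $\geq 5$ survives, then invoke naturality. The reason the paper does not do this is precisely the obstacle you flag but do not resolve: at $p=3$ the sphere's ANSS in stems $3,7,11,15$ is concentrated in filtration $1$, which makes the stem-$23$ census trivial, but at $p=2$ those stems carry $\eta$-towers, $\eta^3$, and Greek-letter contributions at filtrations $\geq 2$. Your claim that the relevant bidegrees are empty is not verified and is not obviously true; Remark \ref{connections} explicitly points out that Hopkins and Mahowald's chart-based argument at this step does not resolve the possible multiplicative extensions that would keep $\ol{\kappa}\nu$ alive in $\pi_\ast B$. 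Even granting the emptiness, your appeal to naturality is too quick in one place: naturality gives that $d_5^B(\Delta)$ maps to $\ol{\kappa}\nu$ in $E_5(\tmf)$, not that it \emph{equals} $\ol{\kappa}\nu$ in $E_5(B)$, and similarly that $8\Delta$ maps to a permanent cycle does not by itself exclude differentials off $8\Delta$ in $B$; one needs to know the relevant target groups are detected by (or injected into) the corresponding groups for $\tmf$, which is again a filtration computation you have not carried out. The missing idea in your proposal is precisely the one the paper contributes here: use the explicit $\sigma$-attaching map of $B$ and the bracket expression for $\ol{\kappa}\nu^2$ to sidestep the delicate $p=2$ bookkeeping entirely.
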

\begin{proof}
    To prove the differentials, first note that the $d_7$-differential follows
    from the $d_5$-differential via the spherical relation $4\nu = \eta^3$; it
    therefore suffices to prove the $d_5$-differential. The class
    $\ol{\kappa}\nu$ lives in bidegree $(23,5)$ in the ANSS for $B$, since it
    lives in that bidegree in the ANSS for both the sphere and for $\tmf$.
    We claim that if $\ol{\kappa} \nu^2$ vanishes in $\pi_\ast B$, then the
    $d_5$-differential follows. It suffices to establish that $d_5(\Delta\nu) =
    \ol{\kappa} \nu^2$, since the desired $d_5$-differential then follows from
    $\nu$-linearity. Since $\ol{\kappa}\nu^2$ is the first element of filtration
    $5$ in the ANSS for the sphere which does not come from an $\eta$-tower on
    the $\alpha$-family elements (and such $\eta$-towers are truncated by ANSS
    $d_3$-differentials), there cannot be any differential off it.  Moreover, if
    it is killed on any finite page in the ANSS, then it must in fact be zero in
    homotopy, since multiplicative extensions have to jump in filtration (and
    there is nothing of higher filtration). We need to show that
    $\ol{\kappa}\nu^2$ cannot be the target of a $d_r$-differential for $2\leq
    r\leq 4$; then the claimed $d_5$-differential on the $E_5$-page is forced by
    the same differential in the ANSS for $\tmf$. The algebraic
    Atiyah-Hirzebruch spectral sequence for the ANSS of $B$ implies that the
    only possibility for a differential is a $d_3$; but the source of any
    nontrivial $d_3$-differential vanishes when mapped to the ANSS for $\tmf$,
    so no such $d_3$-differential can exist.

    We now show that $\ol{\kappa} \nu^2$ vanishes in $\pi_\ast B$. For this, we
    argue as in \cite[Proposition 8.1]{hopkins-mahowald-eo2}. Namely,
    \cite[Lemma 8.2]{hopkins-mahowald-eo2} states that $\ol{\kappa} \nu^2 \in
    \langle \eta_4 \sigma, \eta, 2\rangle$. Recall that $\eta_4 = h_1 h_4$; by
    \cite[Table 21]{more-stable}, there is a $\sigma$-extension from $h_1 h_4$
    to $h_4 c_0$.  There is no indeterminacy in the above Toda bracket, so
    $\ol{\kappa} \nu^2$ will vanish if we show that $h_4 c_0$ vanishes in
    $\pi_{23}(B)$. In fact, it vanishes in the $E_2$-page of the ANSS for $B$:
    since the attaching map of the $8$-cell of $B$ is $\sigma$, the
    $\sigma$-extension on $\eta_4$ implies that $h_4 c_0$ is killed in the
    algebraic Atiyah-Hirzebruch spectral sequence for the ANSS of $B$ by a
    $d_1$-differential off the ANSS class $h_1 h_4$ supported on the cell in
    dimension $8$.
\end{proof}
Finally:
\begin{proof}[Proof of Theorem \ref{main-thm-prime-2}]
    Theorem \ref{d5-B-2} implies that $8\Delta$ lifts to $\pi_\ast B$, and that
    all the brackets in $\pi_\ast \tmf$ in Propositions \ref{toda-1},
    \ref{toda-2}, and \ref{toda-3} are well-defined in $\pi_\ast B$. The
    elements of $\pi_\ast \tmf$ in those propositions for which the bracket has
    no indeterminacy therefore lift to $\pi_\ast B$. By Remark \ref{products},
    all that remains is to show that the constant multiples of the powers of
    $\Delta$ which live in $\pi_\ast \tmf$ in fact lift to $\pi_\ast B$. Theorem
    \ref{d5-B-2} implies that they lift up to indeterminacy, and this
    indeterminacy is specified in Proposition \ref{toda-2}. If $\Delta^{8k} +
    2n\Delta^{8k} = (2n+1) \Delta^{8k}$ lifts for some $n\in \Z_{(2)}$, then so
    does $\Delta^{8k}$ since $2n+1$ is a $2$-local unit. Similarly, one finds
    that $2\Delta^{8k+4}$, $4\Delta^{4k+2}$, and $8\Delta^{2k+1}$ also lift to
    $\pi_\ast B$, as
    desired.
\end{proof}
\begin{remark}\label{ass-B}
    We briefly look at the Adams spectral sequence for $B$. The Steenrod module
    structure of the $20$-skeleton of $B$ is as in Figure \ref{B-cell}; since we
    are at the prime $2$, straight lines are $\Sq^4$, and curved lines
    correspond to $\Sq^8$ and $\Sq^{16}$, in order of increasing length. Using
    this, we can calculate the Adams spectral sequence in small dimensions. The
    Adams charts below were created with Hood Chatham's Ext calculator, and the
    Steenrod module file for $B$ in this range can be found at
    \url{http://www.mit.edu/~sanathd/input-B-leq-24-prime-2}.

    The $E_2$-page for $B$ in the first few dimensions is shown in Figure
    \ref{B-ass-2}; there are no classes in higher Adams filtration in stem $23$.
    The red class is $g = \ol{\kappa}$, and the purple lines are
    $d_2$-differentials. The differential on the class in stem $23$ already
    exists in the Adams spectral sequence for the sphere as $d_2(i) = h_0 Pd_0$.
    The other classes in stem $23$ except for the one in filtration $9$ are
    permanent cycles, and there is no multiplicative extension causing any of
    them to be $\ol{\kappa}\nu$ on homotopy.

    As shown in Figure \ref{B-ass-2-E3}, there is also a $d_3$-differential on
    the leftmost class $x_{24,1}^{(0)}$ in bidegree $(24,6)$ (which supports a
    $h_0$-tower) to the class in bidegree $(23,9)$; the class $h_0
    x_{24,1}^{(0)}$ is a permanent cycle in the ASS for $B$ which is sent to
    $8\Delta$ in the ASS for $\tmf$. The class in bidegree $(25,5)$ is a
    permanent cycle in the ASS for $B$ which is sent to $\Delta\eta$ in the ASS
    for $\tmf$.
\end{remark}

\begin{remark}\label{connections}
    We now compare the approach of this paper with that of
    \cite{hopkins-mahowald-orientations}, where the $\E{1}$-ring $B$ was
    constructed under the name $\ol{X}$. The special case of our Theorem
    \ref{main-thm} for elements in $\pi_\ast \tmf$ of ANSS filtration $0$ is
    stated as \cite[Theorem 11.1]{hopkins-mahowald-orientations}, where a proof
    is only sketched.
    
    First, their Proposition 11.2 is a combination of our Theorem \ref{d5-B} and
    Theorem \ref{d5-B-2}. Secondly, their proof proceeds by calculating the mod
    $2$ Adams spectral sequence of $B$ in dimensions $\leq 24$ to show that
    $\ol{\kappa} \nu$ vanishes in the $2$-local homotopy of $B$. Their argument
    does not seem to resolve potential multiplicative extensions: as Figure
    \ref{B-ass-2-E3} shows, there are two possibilities for multiplicative
    extensions in the Adams spectral sequence which could make $\ol{\kappa}\nu$
    nonzero in $\pi_\ast B_{(2)}$.  (Namely, the classes in bidegrees $(23,6)$
    and $(23,7)$ could represent $\ol{\kappa}\nu$.) Thirdly, Remark \ref{james}
    essentially gives a proof of their Lemma 11.5, which seems to appear without
    proof.
\end{remark}

\begin{figure}
    \includegraphics[scale=0.375]{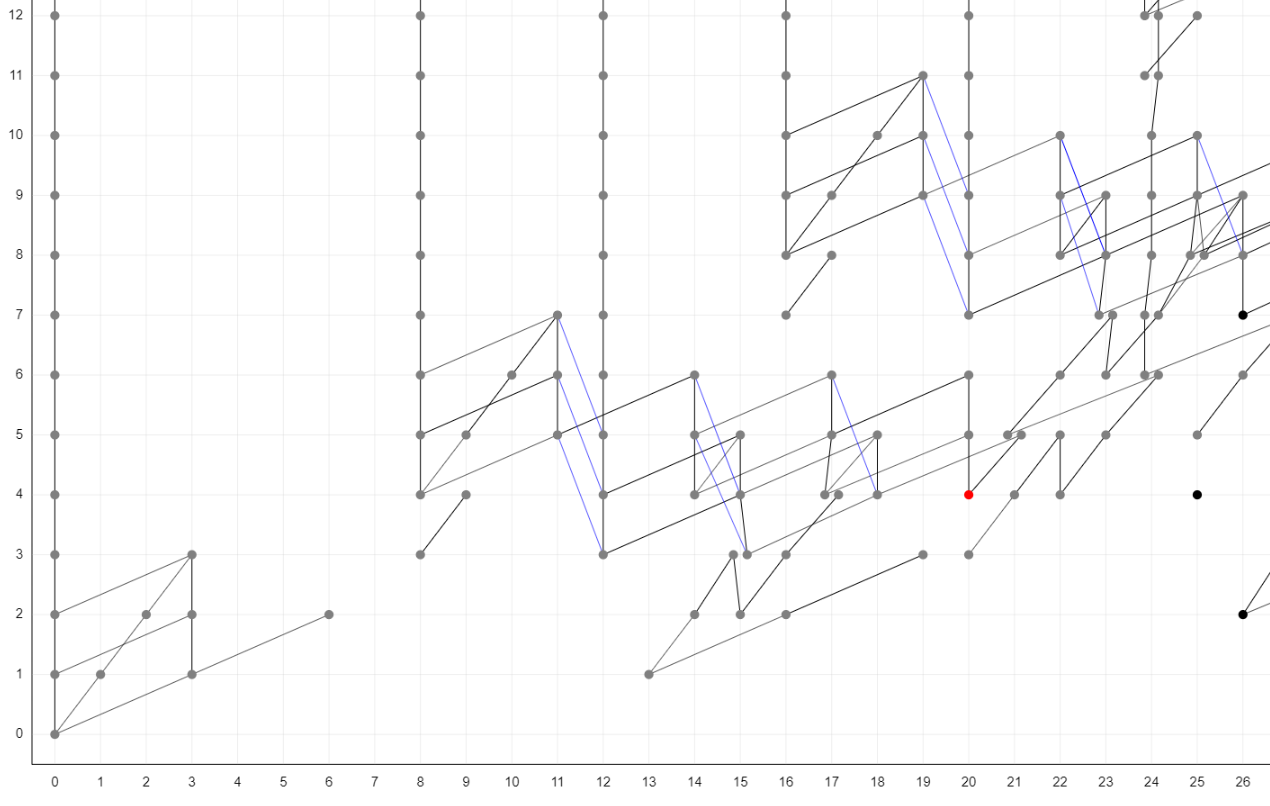}
    \caption{$E_2$-page of the Adams spectral sequence for $B$. The class
    highlighted in red is $\ol{\kappa}$.}
    \label{B-ass-2}
\end{figure}

\begin{figure}
    \includegraphics[scale=0.375]{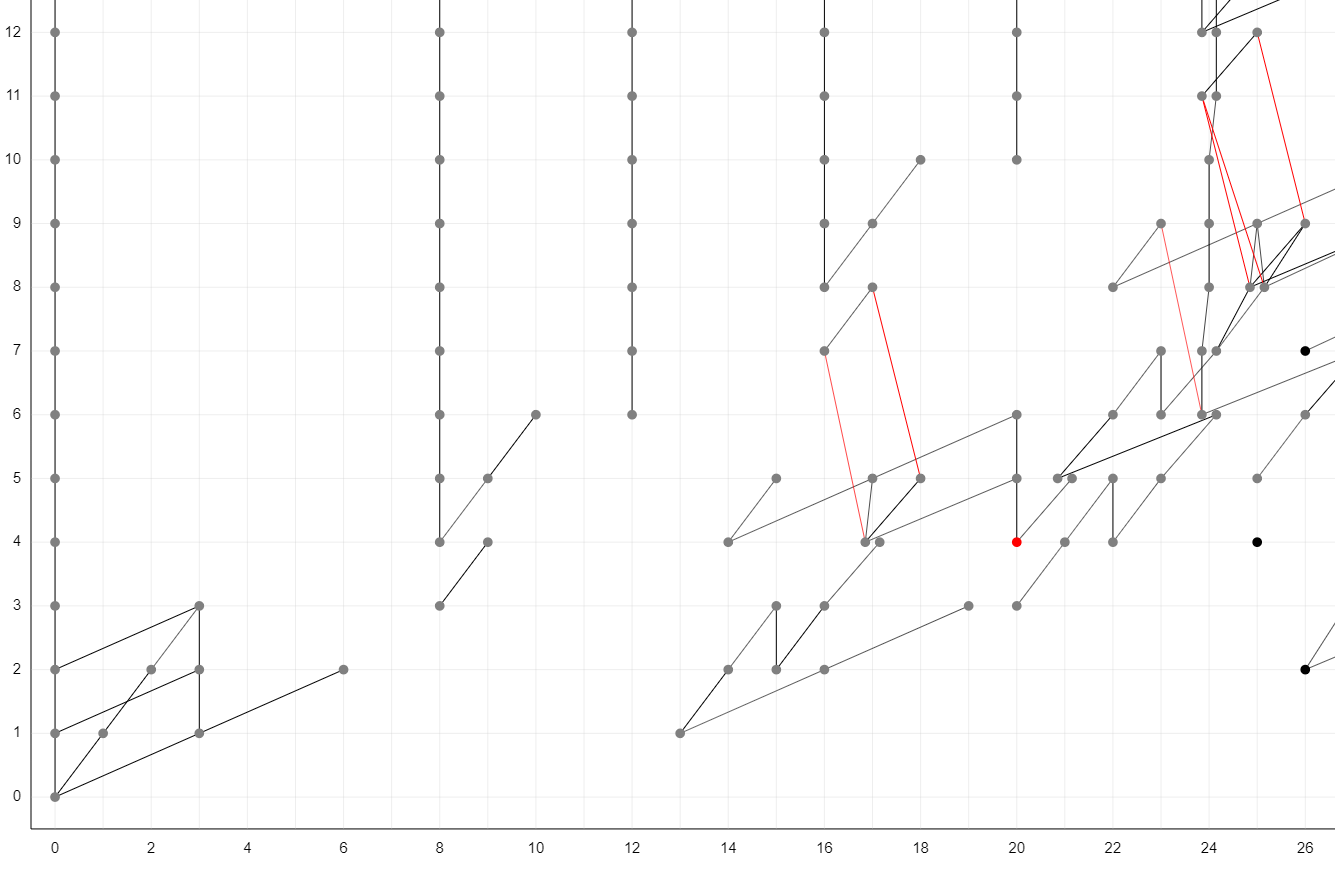}
    \caption{$E_3$-page of the Adams spectral sequence for $B$. The class
    highlighted in red is $\ol{\kappa}$. There are no differentials in this
    range from the $E_4$-page onwards.}
    \label{B-ass-2-E3}
\end{figure}

\section{Applications}\label{apps}

In this section, we study some applications of Theorem \ref{string-surj} and
Theorem \ref{main-thm}.

\subsection{A conjecture of Baker's}

In \cite{baker-conjecture}, Baker constructed a certain collection of
$\Eoo$-ring spectra $Mj_r$ with $\Eoo$-ring maps $Mj_1\to \H\Z$, $Mj_2\to \bo$,
and $Mj_3\to \tmf$. He conjectured in \cite[Conjecture 6.2]{baker-conjecture}
that the map $\pi_\ast Mj_3\to \pi_\ast \tmf$ is surjective. In this section, we
show that this conjecture follows from Theorem \ref{main-thm}.

We begin by recalling the definition of the $\Eoo$-ring spectra $Mj_r$. 
\begin{definition}
    Let $\BO\langle 2^r\rangle^{[2^{r+1}-1]}$ denote the $(2^{r+1}-1)$-skeleton
    of $\BO\langle 2^r\rangle$. Since $\BO\langle 2^r\rangle$ is an infinite
    loop space, the skeletal inclusion $\BO\langle 2^r\rangle^{[2^{r+1}-1]}\to
    \BO\langle 2^r\rangle$ induces a map $\Omega^\infty \Sigma^\infty \BO\langle
    2^r\rangle^{[2^{r+1}-1]}\to \BO\langle 2^r\rangle$. The Thom spectrum of
    this map is the $\Eoo$-ring $Mj_r$.
\end{definition}
There is an evident $\Eoo$-map $Mj_r\to \mathrm{MO}\langle 2^r\rangle$, which in
the case $r = 3$ defines an $\Eoo$-map $Mj_3\to \MString$. The following result
proves the aforementioned conjecture of Baker's as an application of Theorem
\ref{main-thm}:
\begin{prop}[{\cite[Conjecture 6.2]{baker-conjecture}}]
    The composite $Mj_3\to \MString \to \tmf$ is surjective on homotopy, where
    $\MString\to \tmf$ is the Ando-Hopkins-Rezk orientation.
\end{prop}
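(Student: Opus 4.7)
The plan is to reduce the proposition to Theorem \ref{main-thm} by constructing a map of $\E{1}$-rings $\alpha: B \to Mj_3$ such that the composite $B \xrightarrow{\alpha} Mj_3 \to \MString$ agrees with the $\E{1}$-ring map $B \to \MString$ of Construction \ref{B-constr}. Granting such an $\alpha$, the triangle $B \to Mj_3 \to \tmf$ coincides with the map of Theorem \ref{main-thm}, which is surjective on homotopy; consequently $\pi_\ast Mj_3 \to \pi_\ast \tmf$ is surjective, as required.

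To produce $\alpha$, I would invoke the universal property of $B$ as an $\E{1}$-Thom spectrum from \cite{barthel-thom}: $\E{1}$-ring maps $B \to Mj_3$ correspond to $\E{1}$-lifts of the classifying map $N \to \BString \to B\GL_1(\S)$ through $B\GL_1(Mj_3)$. The tautological orientation of $Mj_3$ supplies an $\Eoo$-lift of $\Omega^\infty \Sigma^\infty \BString^{[15]} \to \BString \to B\GL_1(\S)$ to $B\GL_1(Mj_3)$, so it suffices to construct an $\E{1}$-map of spaces $\widetilde{\alpha}: N \to \Omega^\infty \Sigma^\infty \BString^{[15]}$ over $\BString$. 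The bottom cell $S^8 \to \BString$ already factors through $\BString^{[15]}$, and via the unit of the $\Sigma^\infty \dashv \Omega^\infty$ adjunction together with the $\E{1}$-universal property of $\Omega S^9$, this produces the desired restriction $\Omega S^9 \to \Omega^\infty \Sigma^\infty \BString^{[15]}$ of $\widetilde{\alpha}$.

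The main obstacle will be extending $\widetilde{\alpha}$ from $\Omega S^9$ to all of $N$ using the fiber sequence $\Omega S^9 \to N \to \Omega S^{13}$ from Construction \ref{B-constr}. Concretely, this amounts to verifying that the attaching element $f \in \pi_{12}\O(10)$ used to build $BN$ lifts compatibly through $\Omega^\infty \Sigma^\infty \BString^{[15]}$. This is a finite obstruction-theoretic calculation in low-dimensional stable homotopy; the obstructions live in the same stems that already govern the $\E{1}$-map $N \to \BString$ defining $B$, so they should vanish for parallel reasons. Once $\alpha$ is in hand, the proposition follows immediately from Theorem \ref{main-thm}.
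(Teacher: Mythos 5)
Your reduction is the same as the paper's: granting Theorem \ref{main-thm}, it suffices to factor the $\E{1}$-map $B\to \MString$ through $Mj_3$, and since $Mj_3$ is the Thom spectrum of a bundle over $\Omega^\infty\Sigma^\infty\BString^{[15]}$, this amounts to producing a (loop) map $N\to \Omega^\infty\Sigma^\infty\BString^{[15]}$ factoring $N\to\BString$. So far so good.

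The gap is in how you build that factorization. You propose constructing it on the fiber $\Omega S^9\hookrightarrow N$ and then extending along the fiber sequence $\Omega S^9\to N\to\Omega S^{13}$, and you declare that the resulting obstruction ``should vanish for parallel reasons.'' That is not an argument: the existence of the map $N\to\BString$ does not by itself control the obstruction to lifting it through $\Omega^\infty\Sigma^\infty\BString^{[15]}\to\BString$, and you give no computation of the relevant group. The paper avoids this entirely by using the \emph{other} fiber sequence for $N$, namely the middle row $N\to\Omega S^{13}\to S^9$ of \eqref{fiber-sequence-map}, so that $N\to\BString$ is the map induced on fibers from $S^9\to \mathrm{B^2String}$. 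The latter map factors through $\Omega^{\infty-1}\Sigma^\infty\BString^{[15]}$ for purely dimensional reasons: it is the delooping of the generator $S^8\to\BString$, which lands in the $15$-skeleton $\BString^{[15]}$, hence in $\Omega^\infty\Sigma^\infty\BString^{[15]}$ via the unit. Passing to fibers of the resulting map of fiber sequences produces the desired loop map $N\to\Omega^\infty\Sigma^\infty\BString^{[15]}$ with no obstruction theory at all. Your outline could presumably be pushed through, but as written the key step is unjustified, whereas the dimensional observation renders it immediate.
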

\begin{proof}
    By Theorem \ref{main-thm}, it suffices to show that the map $B\to \MString$
    factors through a map $B\to Mj_3$. Since $Mj_3$ is the Thom spectrum of a
    bundle over $\Omega^\infty \Sigma^\infty \BString^{[15]}$, this in turn
    follows from the existence of a map $N\to \Omega^\infty \Sigma^\infty
    \BString^{[15]}$ factoring $N\to \BString$. Recall that the map $N\to
    \BString$ was constructed via the map \eqref{fiber-sequence-map} of fiber
    sequences. The map $S^9\to \mathrm{B^2 String}$ factors as $S^9\to
    \Omega^{\infty-1} \Sigma^\infty \BString^{[15]}$, and so the map of fiber
    sequences in \eqref{fiber-sequence-map} factors as
    \begin{equation*}
	\xymatrix{
	    N \ar[r] \ar[d]_-\exists & \Omega S^{13} \ar[r] \ar[d] & S^9
	    \ar[d]\\
	    \Omega^\infty \Sigma^\infty \BString^{[15]} \ar[r] \ar[d] & \ast
	    \ar[r] \ar[d] & \Omega^{\infty-1} \Sigma^\infty \BString^{[15]}
	    \ar[d]\\
	    \BString \ar[r] & \ast \ar[r] & \mathrm{B^2 String},
	    }
    \end{equation*}
    as desired.
\end{proof}

\subsection{Hirzebruch's prize question}

Another application of Theorem \ref{string-surj} was stated as \cite[Corollary
6.26]{hopkins-icm}, and provides an answer to Hirzebruch's prize question
\cite[Page 86]{hirzebruch}. See also \cite{hopkins-mahowald-orientations}.
\begin{corollary}\label{prize}
    There exists a $24$-dimensional compact smooth string manifold $M$ with
    $\hat{A}(M)=1$ and $\hat{A}(M,\tau_M\otimes \cc)=0$.
\end{corollary}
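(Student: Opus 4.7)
The plan is to reduce the existence of such a manifold $M$ to the surjectivity statement of Theorem \ref{string-surj}, via the description of the Witten genus as an element of $\pi_{24}\tmf$ and its $q$-expansion.

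First I would recall that for a $24$-dimensional string manifold $M$, the Ando-Hopkins-Rezk orientation sends the bordism class $[M]\in \pi_{24}\MString$ to the Witten genus $\phi_W(M)\in \pi_{24}\tmf$, and composing with the edge homomorphism (i.e.\ the $0$-line of the descent spectral sequence) lands in the space $MF_{12}$ of integral modular forms of weight $12$. The standard relationship between the Witten genus and characteristic numbers gives a $q$-expansion of the form
\[
\phi_W(M) \;=\; \hat{A}(M) \;+\; \bigl(\hat{A}(M,\tau_M\otimes \cc) - 24\,\hat{A}(M)\bigr)\, q \;+\; O(q^2),
\]
so the two conditions of the corollary are equivalent to asking for the $q$-expansion to begin $1 - 24\,q + O(q^2)$.

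Next I would identify the target element in $\pi_{24}\tmf$. Since $MF_{12}$ is two-dimensional, with basis $\{E_4^3,\Delta\}$ and $q$-expansions $E_4^3 = 1 + 720\,q + \cdots$, $\Delta = q - 24 q^2 + \cdots$, there is a unique integral weight-$12$ modular form with constant term $1$ and first coefficient $-24$, namely $f = E_4^3 - 744\,\Delta$ (solving $720 - 744 = -24$). Now $c_4^3 \in \pi_{24}\tmf$ maps to $E_4^3$ on the $0$-line. The analysis of Section \ref{invert-2} shows $3\Delta\in \pi_{24}\tmf_{(3)}$, and the analysis of Section \ref{prime-2} shows $8\Delta\in \pi_{24}\tmf_{(2)}$; after inverting $6$, $\Delta$ itself lives in $\pi_{24}\tmf$. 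It follows that $24\,\Delta\in \pi_{24}\tmf$ integrally, and since $744 = 31\cdot 24$, the element
\[
x \;:=\; c_4^3 \;-\; 31\cdot (24\,\Delta) \;\in\; \pi_{24}\tmf
\]
maps to $f$ under the edge homomorphism.

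Finally, I would apply Theorem \ref{string-surj}: the Ando-Hopkins-Rezk orientation $\pi_{24}\MString \to \pi_{24}\tmf$ is surjective, so there exists a class $[M]\in \pi_{24}\MString$ mapping to $x$. By the Thom-Pontryagin construction, $[M]$ is represented by a closed smooth $24$-manifold with string structure. Its Witten genus is then $f$, whose $q$-expansion yields $\hat{A}(M)=1$ and $\hat{A}(M,\tau_M\otimes \cc)=0$, as required. The only step that is not completely formal is verifying the $q$-expansion formula for $\phi_W(M)$ in the stated form; this is classical (due to Witten, and recorded in \cite{hopkins-icm}), so I would quote it rather than reprove it.
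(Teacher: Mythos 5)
Your argument is correct, and at the top level it is the same reduction the paper makes: translate the two $\hat{A}$-conditions into the statement that the Witten genus is $c_4^3-744\Delta$, observe $744=31\cdot 24$, and use Theorem \ref{string-surj} to lift. The one genuine difference is how the $c_4^3$ part is handled. You apply surjectivity once to the single class $x=c_4^3-31\cdot(24\Delta)\in\pi_{24}\tmf$ (whose existence you justify via $c_4\in\pi_8\tmf$, multiplicativity, and an arithmetic-fracture argument for $24\Delta$; alternatively Theorem \ref{tmf-htpy} gives it directly), so your proof is purely existential and bypasses both Lemma \ref{commute-lemma} and Laures' string structure result. The paper instead only invokes Theorem \ref{string-surj} for $24\Delta$, and realizes $c_4$ geometrically by the explicit $8$-manifold $-M^8_0-224\,\HP^2$ (Kervaire--Milnor manifold minus $224$ copies of $\HP^2$), which requires Laures' lemma for the string structure and Lemma \ref{commute-lemma} (compatibility of the Ando--Hopkins--Rezk and Atiyah--Bott--Shapiro orientations) to pin down its Witten genus as $c_4$; the payoff is a more explicit manifold representative $N_{c_4}^3-31N_\Delta$, which is in the spirit of Hirzebruch's question, whereas your route is shorter but produces no explicit representative for the $c_4^3$ part. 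Your quoted $q$-expansion $\hat{A}(M)+\bigl(\hat{A}(M,\tau_M\otimes\cc)-24\hat{A}(M)\bigr)q+O(q^2)$ and the identification of the target form as $E_4^3-744\Delta=\Delta(j-744)$ agree with the discussion the paper cites from Hirzebruch, so no gap there.
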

\begin{proof}
    By the discussion on \cite[Page 86]{hirzebruch}, the conditions on the
    $\hat{A}$-genus of $M$ are equivalent to the Witten genus of $M$ being
    $c_4^3 - 744 \Delta = \Delta(j - 744)$, where $j$ is the $j$-function. Let
    $M^8_0$ denote the Kervaire-Milnor almost parallelizable $8$-manifold; then,
    the $8$-manifold $-M^8_0 - 224 \HP^2$ (whose string cobordism class we will
    denote by $[N_{c_4}]$, where $N_{c_4}$ is the explicit manifold
    representative above) admits a string structure by \cite[Lemma
    15]{laures-k1-local}. The map $\tmf \to \bo$ sends $c_4 \in \pi_8 \tmf$ to
    $v_1^4\in \pi_8 \bo$. By Lemma \ref{commute-lemma}, there is a commutative
    diagram:
    \begin{equation}\label{string-commute}
	\xymatrix{
	\MString \ar[r] \ar[d] & \MSpin \ar[d]\\
	\tmf \ar[r] & \bo,}
    \end{equation}
    where the left vertical map is the Ando-Hopkins-Rezk orientation and the
    right vertical map is the Atiyah-Bott-Shapiro orientation. Consequently, the
    Witten genus of $-M^8_0 - 224 \HP^2$ is $c_4$.

    By Theorem \ref{string-surj}, the element $24\Delta\in \pi_{24} \tmf$ lifts
    to a class $[N_\Delta]$ in $\pi_{24} \MString$, where $N_\Delta$ is any
    manifold representative. Since $744\Delta = 31\cdot 24\Delta$, we conclude
    that the string cobordism class of the $24$-dimensional compact oriented
    smooth string manifold $N_{c_4}^3 - 31 N_\Delta$ has Witten genus $c_4^3 -
    744 \Delta$, as desired.
\end{proof}
The proof of Corollary \ref{prize} utilized the following lemma.
\begin{lemma}\label{commute-lemma}
    The diagram \eqref{string-commute} commutes, where the left vertical map is
    the Ando-Hopkins-Rezk orientation and the right vertical map is the
    Atiyah-Bott-Shapiro orientation. 
\end{lemma}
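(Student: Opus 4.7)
The plan is to verify commutativity in the $\infty$-category of $\Eoo$-ring spectra. First, I would observe that every arrow in the square is an $\Eoo$-ring map: the Atiyah-Bott-Shapiro and Ando-Hopkins-Rezk orientations are $\Eoo$ (by Joachim, and by \cite{koandtmf} respectively), the horizontal map $\MString \to \MSpin$ is the Thomification of the $\Eoo$-group map $\BString \to \BSpin$ which forgets the String lift, and $\tmf \to \bo$ is an $\Eoo$-ring map by the Goerss-Hopkins construction of $\tmf$. It therefore suffices to produce an $\Eoo$-homotopy between the two resulting composites $\MString \rightrightarrows \bo$.

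Next, I would invoke the universal property of $\MString$ as an $\Eoo$-Thom spectrum (Ando-Blumberg-Gepner-Hopkins-Rezk; cf.\ \cite{barthel-thom}), which identifies the space of $\Eoo$-ring maps $\MString \to \bo$ with the space of null-homotopies of the composite $\BString \to B\GL_1(\S) \to B\GL_1(\bo)$. Commutativity of the square reduces to showing that two specific null-homotopies of this composite coincide: (a) the one obtained by restricting the Atiyah-Bott-Shapiro null-homotopy on $\BSpin$ along $\BString \to \BSpin$; and (b) the one obtained by pushing the sigma-orientation null-homotopy on $B\GL_1(\tmf)$ forward along the $\Eoo$-map $B\GL_1(\tmf) \to B\GL_1(\bo)$.

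The hard part is matching these two null-homotopies. The key input I would rely on is the characterizing property of $\tmf \to \bo$: it corresponds, on characteristic series, to the specialization of the Witten class to the $\hat{A}$-class at the cusp $q=0$ of the (compactified) moduli stack of elliptic curves. Equivalently, after $p$-completion at $p\in\{2,3\}$, it can be identified with a composite $\tmf \to \Lone \tmf \to \KO$ extracted from the Tate curve, as in Laures's $K(1)$-local framework. Under this identification, both routes around the square produce $\Eoo$-orientations of $\MString$ by $\bo$ whose underlying null-homotopies of $\BString \to B\GL_1(\bo)$ agree rationally (since the Witten and $\hat{A}$ classes share a common cusp specialization) and $K(1)$-locally (by Laures), and hence agree after assembly via the arithmetic fracture square. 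This produces the desired $\Eoo$-homotopy. I expect the main obstacle to be pinning down this characterizing property of $\tmf \to \bo$ precisely enough to conclude an equality of null-homotopies rather than merely of characteristic power series; an alternative, more hands-on route would be to bypass the $\Eoo$ universal property entirely and check the diagram directly on connective covers using that $\pi_\ast\bo$ in low degrees is detected by the $\hat{A}$-genus, noting that the square has already been shown to commute after restriction to every concrete element of $\pi_\ast\MString$ we need in the proof of Corollary~\ref{prize}.
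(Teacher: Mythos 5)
Your plan correctly identifies the crux of the problem — that to match two $\Eoo$-orientations $\MString \to \bo$ one must match null-homotopies of $\BString \to B\GL_1(\S) \to B\GL_1(\bo)$, not merely characteristic power series — but it does not close this gap, and you flag the missing step yourself as ``the main obstacle.'' The fracture argument you sketch (agree rationally, agree $K(1)$-locally, then assemble) is plausible in outline but is nowhere executed; in particular, you would still need to know that the component of the space of null-homotopies is determined by this data. The paper instead sidesteps the entire null-homotopy comparison by citing a rigidity theorem, \cite[Corollary 7.12]{koandtmf}, which says precisely that an orientation of $\MString$ by $\KO$ (equivalently $\bo$, after taking connective covers) is determined up to homotopy by its characteristic series. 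Once that is in hand, the remaining content is purely computational: the map $\pi_\ast\tmf\otimes\QQ \to \pi_\ast\bo\otimes\QQ$ sends a weight-$k$ modular form $f = \sum a_n q^n$ to $a_0(\beta^2)^k$, so the composite $\MString \to \tmf \to \bo$ has characteristic series equal to the constant $q$-term of the Witten class, which by Hirzebruch's discussion is the $\hat{A}$-class. This is essentially the same observation as your ``common cusp specialization,'' but in the paper it is the \emph{whole} proof, because the uniqueness theorem upgrades it to commutativity.

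Your ``alternative, more hands-on route'' at the end has a sharper defect: it is circular. You propose to verify the diagram on ``every concrete element of $\pi_\ast\MString$ we need in the proof of Corollary~\ref{prize},'' but that corollary's proof is an application of the present lemma, so it cannot be quoted to establish the lemma. Even setting that aside, verifying equality of the two composites on a handful of homotopy elements would only show that the square commutes on those elements of $\pi_\ast$, which is weaker than the homotopy-commutativity the lemma actually asserts. The correct repair of your argument is exactly to import the uniqueness-from-characteristic-series result of Ando-Hopkins-Rezk, after which your rational computation finishes the proof — and that is the proof the paper gives.
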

\begin{proof}
    We need to show that the composite $\MString\to \tmf \to \bo$ comes from the
    Atiyah-Bott-Shapiro orientation. By \cite[Corollary 7.12]{koandtmf}, it
    suffices to show that this composite has the same characteristic series as
    the restriction of the $\hat{A}$-genus to string manifolds. There is an
    isomorphism $\pi_\ast \bo\otimes \QQ \cong \Z[\beta^2]$, where $\beta^2$
    lives in degree $4$ and is the square of the Bott element.  Moreover,
    $\pi_\ast \tmf \otimes \QQ$ is isomorphic to the ring of rational modular
    forms (of weight given by half the degree in $\pi_\ast \tmf\otimes \QQ$) by
    \cite[Proposition 4.4]{bauer-tmf}. The map $\pi_\ast \tmf \otimes \QQ\to
    \pi_\ast \bo\otimes \QQ$ sends a modular form of weight $k$ with
    $q$-expansion $f(q) = \sum a_n q^n$ to the element $a_0 (\beta^2)^k\in
    \pi_{2k}\bo\otimes \QQ$. Consequently, the composite $\pi_\ast\MString\to
    \pi_\ast \tmf \otimes \QQ \to \pi_\ast \bo\otimes \QQ$ sends a string
    manifold $M$ to the constant term of the $q$-expansion of its Witten genus.
    The lemma will therefore follow if this constant term is the $\hat{A}$-genus
    of $M$, but this follows from the discussion on \cite[Page 84]{hirzebruch}.
\end{proof}
\begin{remark}
    The modular form $c_4^3 - 744 \Delta$ is $\theta_{\Lambda_{24}} - 24
    \Delta$, where $\Lambda_{24}$ is the $24$-dimensional Leech lattice and
    $\theta_{\Lambda_{24}}$ is its theta function.
\end{remark}
\begin{remark}\label{monster-action}
    The original motivation for Hirzebruch's prize question was to relate the
    geometry of the $24$-dimensional string manifold $M$ of Corollary
    \ref{prize} to representations of the monster group by constructing an
    action of the monster group on $M$. The question of constructing this action
    remains unresolved.
\end{remark}
\begin{remark}
    The disussion on \cite[Page 86]{hirzebruch} implies that $\hat{A}(N_\Delta)
    = 0$ and $\hat{A}(N_\Delta, \tau_{N_\Delta}\otimes \cc) = 24$. It follows
    from \cite[Theorem A]{stolz-scalar} that $N_\Delta$ (which we may assume is
    simply-connected by surgery) admits a metric with positive scalar curvature.
    Since the Witten genus of $N_\Delta$ is nonzero, Stolz's conjecture in
    \cite{stolz-ricci} would imply that it does not admit a metric of
    positive-definite Ricci curvature. We do not know whether Stolz's conjecture
    holds in this particular case. Note, however, that there are examples of
    non-simply-connected manifolds which admit positive scalar curvature metrics
    but no metrics of positive-definite Ricci curvature: as pointed out to us by
    Stolz, a connected sum of lens spaces of dimension at least $3$ gives such a
    manifold.
\end{remark}

Corollary \ref{prize} may be generalized in the following manner. Recall the
following definition from \cite[Section 2.3]{ono-web}. Let $j_1(z) = j(z) -
744$, and define $j_n(z)$ for $n\geq 2$ via $nT_n(j_1(z))$, where $T_n$ is the
weight zero Hecke operator, acting on $f(z)$ via
$$T_n f(z) = \sum_{d|n, ad = n} \sum_{b=0}^{d-1} f\left(\frac{az+b}{d}\right).$$
By \cite[Proposition 2.13]{ono-web}, $j_n(z)$ is a monic integral polynomial in
$j(z)$ of degree $n$; for instance,
$$j_2(z) = j(z)^2 - 1488 j(z) + 159768, \ j_3(z) = j(z)^3 - 2232 j(z)^2 +
1069956 j(z) - 36866976.$$
The functions $j_n(z)$ for $n\geq 0$ (where $j_0(z) = 1$) form a basis for the
complex vector space of weakly holomorphic modular forms of weight $0$, and
appear in the denominator formula for the monster Lie algebra. They may be
defined by Faber polynomials on $j$. The generalization of Corollary \ref{prize}
is as follows.
\begin{prop}\label{generalization}
    For all $n\geq 0$, there is a $24n$-dimensional compact smooth string
    manifold $M^{24n}$ whose Witten genus is $\Delta^n j_n(z)$.
\end{prop}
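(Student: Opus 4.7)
The plan is to reduce to Theorem \ref{string-surj}: since the Ando-Hopkins-Rezk orientation induces a surjection $\pi_\ast \MString \to \pi_\ast \tmf$, it suffices to show that $\Delta^n j_n(z)$ defines an element of $\pi_{24n} \tmf$, and then take $M^{24n}$ to be any string manifold representing a preimage under the Ando-Hopkins-Rezk orientation.

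To show $\Delta^n j_n \in \pi_{24n} \tmf$, write $j_n = P_n(j)$ for the monic integer polynomial $P_n$ of degree $n$ (as in the paragraph preceding the proposition). Applying the relation $\Delta \cdot j = c_4^3$, we obtain
$$\Delta^n j_n(z) = c_4^{3n} + \sum_{k=1}^{n-1} e_{n,k} \cdot c_4^{3(n-k)} \Delta^k + e_{n,n} \Delta^n$$
with integer coefficients $e_{n,k}$. For $0 < k < n$, the monomial $c_4^{3(n-k)} \Delta^k = c_4^{3(n-k)-1} \cdot (c_4 \Delta^k)$ lies in $\pi_{24n} \tmf$, since $c_4 \Delta^k$ is a permanent cycle in the Adams-Novikov spectral sequence by Remark \ref{c4-2c6} at $p = 2$ and by the analogous argument at $p = 3$ (using $c_4 \alpha = 0$ to preclude any differential on $c_4 \Delta^k$). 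The summand $c_4^{3n}$ is already in $\pi_{24n} \tmf$ as a power of $c_4$.

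The remaining task is to show $e_{n,n} \Delta^n \in \pi_{24n} \tmf$, which requires that the constant term $e_{n,n}$ of $P_n$ be divisible by the smallest positive integer $d_n$ such that $d_n \Delta^n$ survives the Adams-Novikov spectral sequence for $\tmf$. From Sections \ref{anss-tmf} and \ref{anss-tmf-2}, $d_n$ equals the product of a $2$-local factor in $\{1,2,4,8\}$ (determined by $n \bmod 8$ via Proposition \ref{toda-2}) and a $3$-local factor in $\{1,3\}$ (determined by $n \bmod 3$ via Corollary \ref{bracket-delta}). The main obstacle is establishing this $2$- and $3$-adic divisibility of $e_{n,n}$. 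We would approach this via the Faber polynomial recursion $j_{n+1}(z) = j(z) \cdot j_n(z) - \sum_{k=0}^{n} a_{n,k}\, j_k(z)$ (with integer $a_{n,k}$, since they are sums of products of Fourier coefficients of $j$ and $j_n$), reducing inductively to mod-$2$ and mod-$3$ congruences on the Fourier coefficients of $j(z)$. The base case $n=1$ is precisely Corollary \ref{prize}. Alternatively, one may invoke the integrality of the Borcherds products appearing in the monster Lie algebra denominator formula (in which the $j_n$ naturally occur), from which the required $p$-adic divisibility of $e_{n,n}$ can be read off.
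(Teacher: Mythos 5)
Your reduction to Theorem \ref{string-surj} plus a membership check in the image of $\pi_\ast\tmf\to\mathrm{MF}_\ast$ is exactly the paper's strategy, and your use of $j\Delta=c_4^3$ to expand $\Delta^n j_n$ matches the paper. The middle terms $c_4^{3(n-k)}\Delta^k$ with $0<k<n$ are handled correctly (the paper invokes Theorem \ref{tmf-htpy} rather than arguing via permanent cycles of $c_4\Delta^k$, but both routes are valid), and you correctly identify the crux: the constant term $e_{n,n}$ of $P_n$ must be divisible by $24/\gcd(24,n)$.

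However, the crux is exactly where your proof stops being a proof. The phrasing ``We would approach this via the Faber polynomial recursion\dots'' and ``Alternatively, one may invoke the integrality of the Borcherds products\dots'' describes two candidate strategies without carrying either through. Neither the mod-$2$/mod-$3$ congruences for the Fourier coefficients of $j$ nor the relevant divisibility consequences of the Borcherds denominator formula are written down, and it is not clear a priori that either sketch delivers the precise required divisibility by $24/\gcd(24,n)$. The paper closes this gap with a short, clean argument that your sketch does not anticipate: since $j$ vanishes at a primitive third root of unity $\omega$, the constant term is $e_{n,n}=j_n(\omega)$, and the Asai--Kaneko--Ninomiya generating function identity specializes at $\tau=\omega$ to $\sum_{n\ge 0} j_n(\omega)q^n=-j'(z)/j(z)=c_6/c_4$. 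Because the nonconstant $q$-coefficients of $c_4=1+240\sum\sigma_3(n)q^n$ and $c_6=1-504\sum\sigma_5(n)q^n$ are all divisible by $24$, so are the coefficients of $q^m$ in $c_6/c_4$ for $m\ge 1$; hence $j_n(\omega)$ is divisible by $24$ (in fact more than the needed $24/\gcd(24,n)$) for all $n\ge 1$. You should supply this, or a comparably complete argument, to finish the proof.
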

\begin{remark}
    By arguing as in \cite[Pages 86-87]{hirzebruch}, we find that the twisted
    $\hat{A}$-genera of bundles over $M^{24n}$ constructed from the complexified
    tangent bundle of $M$ are integral linear combinations of dimensions of
    irreducible representations of the monster group; for instance,
    $\hat{A}(M^{48}; \Sym^2(\tau_M\otimes \cc))$ is the coefficient of $q^2$ in
    $\Delta^2 j_2(z)$, which is $2\times (21296876 + 196883 + 1)$. More
    generally, $\hat{A}(M^{24n}; \Sym^2(\tau_M\otimes \cc))$ is an integral
    linear combination of the dimensions of the $n$ smallest irreducible
    representations of the monster group. In light of Hirzebruch's original
    motivation for his prize question (see Remark \ref{monster-action}), it
    seems reasonable to conjecture that the $24n$-dimensional string manifold
    $M^{24n}$ admits an action of the monster group by diffeomorphisms.
\end{remark}
\begin{remark}
    It would be interesting to know if there is an analogue of Proposition
    \ref{generalization} for other McKay-Thompson series.
\end{remark}
Before providing the proof, we need the following result.
\begin{theorem}\label{tmf-htpy}
    A modular form $f$ is in the image of the boundary homomorphism $\pi_\ast
    \tmf \to \mathrm{MF}_\ast$ in the Adams-Novikov spectral sequence if and
    only if it is expressible as an integral linear combination of monomials of
    the form $a_{ijk} c_4^i c_6^j \Delta^k$ with $i,k\geq 0$ and $j=0,1$, where
    $$a_{ijk} = \begin{cases}
	1 & i>0,j=0\\
	2 & j=1\\
	24/\gcd(24,k) & i,j=0.
    \end{cases}$$
\end{theorem}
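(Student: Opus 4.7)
The plan is to verify the theorem one prime at a time and then assemble, exploiting the fact that the multiplier $a_{ijk}$ has only $2$ and $3$ in its prime factorization. At primes $p \geq 5$, the descent spectral sequence for $\tmf$ has no differentials on the $0$-line (the moduli stack of elliptic curves is tame away from $6$), so every integral modular form lifts, consistent with $a_{ijk}$ being a unit $p$-locally.

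At $p = 3$, I would use Section \ref{anss-tmf}. The key observation is that $c_4$ and $c_6$ both annihilate $\alpha$ and $\beta$ on the $E_2$-page, so the only possible differential on $c_4^i c_6^j \Delta^k$, namely a $d_5$ landing in the $\alpha\beta^2$-locus by Proposition \ref{d5-tmf-3}, vanishes as soon as $(i,j) \neq (0,0)$. Thus every such class is a permanent cycle, accounting for the $a_{ijk} = 1$ and $a_{ijk} = 2$ cases (the factor $2$ is a unit $3$-locally). For pure $\Delta^k$, the Leibniz rule gives $d_5(\Delta^k) = k \Delta^{k-1}\alpha\beta^2$, and combined with $3\alpha = 0$ and Corollary \ref{bracket-delta} this identifies the minimal surviving multiplier as precisely the $3$-part of $24/\gcd(24,k)$ (namely $3$ if $3 \nmid k$ and $1$ if $3 \mid k$).

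At $p = 2$, I would use Section \ref{anss-tmf-2}. Here $c_4$ kills all of $\eta, \nu, \epsilon, \kappa$ by the stated relations, while $c_4\ol{\kappa} = \eta^4\Delta$ combined with $\eta\nu = 0 = \eta^4$ shows that $c_4$ also kills the differential targets $\ol{\kappa}\nu$ and $\ol{\kappa}\eta^3$ of Theorem \ref{d5}. Multiplying by $c_4^i$ thus forces permanence. For $2c_6$, the additional factor of $2$ together with $2\eta = 0$ achieves the analogous annihilation, giving all $c_4^i \Delta^k$ and $2c_4^i c_6 \Delta^k$ as permanent cycles. For pure $\Delta^k$, Proposition \ref{toda-2} pinpoints the minimal multiplier $\{1, 2, 4, 8\}$ according to $k \pmod 8$, which is exactly the $2$-part of $24/\gcd(24,k)$. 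The sufficiency of all these lifts then follows by linearly combining, so the ``if'' direction is complete once we know $c_4$ and $2c_6\Delta^k$ are permanent cycles, the latter being Remark \ref{c4-2c6}.

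The main obstacle is the ``only if'' direction at $p=2$, in particular the claim that no multiple of $c_6 \Delta^k$ smaller than $2c_6\Delta^k$ lifts. For this I would argue via the complex-oriented cover $\tmf \to \tmf_1(3)$ used in Remark \ref{c4-2c6}: any hypothetical lift of $c_6\Delta^k$ would push forward to an element of $\pi_\ast \tmf_1(3)_{(2)}$ halving the known generator, contradicting the identification of $\pi_\ast \tmf_1(3)$ with $\Z_{(2)}[v_1, v_2]$ under the complex orientation together with the computed image of $c_6\Delta^k$. The analogous checks that $(24/\gcd(24,k)) \Delta^k$ cannot be further divided at $p = 2$ or $p = 3$ are forced by the nonvanishing of $d_5(\Delta^k)$ and $d_7(4\Delta^k)$ in Theorems \ref{d5-B} and \ref{d5-B-2} transported back to $\tmf$ via the $\E{1}$-ring map $B \to \tmf$. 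Assembling the local statements via the Chinese remainder theorem applied to the coefficients yields the integral multiplier $a_{ijk}$ as stated.
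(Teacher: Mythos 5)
The paper does not prove this theorem: its ``proof'' is the single sentence citing \cite[Proposition 4.6]{hopkins-icm} and the full computation in \cite{bauer-tmf}. Your proposal, by contrast, attempts a genuine proof from the differential structure of the descent spectral sequence, which is a different (and much more ambitious) route. Unfortunately, it has several genuine gaps.

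The most concrete error is the assertion ``$\eta\nu = 0 = \eta^4$'' used to conclude $c_4 \cdot \ol{\kappa}\eta^3 = 0$ at $p=2$. On the $E_2$-page $\H^\ast(\Mell;\omega^{2\ast})$ the class $\eta^4$ is \emph{nonzero} (the relation $\ol{\kappa}c_4 = \eta^4\Delta$ explicitly invokes it); $\eta^4$ only dies in homotopy because it is the target of a $d_3$-differential from $a_1^2\eta$. Consequently $c_4\ol{\kappa}\eta^3 = \eta^7\Delta \neq 0$ on $E_2$, and the argument that $c_4$ annihilates the $d_7$-target needs to pass through the fact that $\eta^4\Delta$, and hence $\eta^7\Delta$, is already dead by the $E_4$-page. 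You cannot simply quote a homotopy relation and treat it as an $E_2$-relation.

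A second gap concerns the lower bound for $c_6$. The reason only $2c_6\Delta^k$ (and not $c_6\Delta^k$) lifts is a nontrivial $d_3$-differential off $c_6$, which your sketch never identifies; you instead appeal to the image of $\pi_\ast\tmf \to \pi_\ast \tmf_1(3)$ being exactly $\{c_4\Delta^k, 2c_6\Delta^k\}$. But that description of the image is itself a consequence of knowing $\pi_\ast\tmf$, so invoking it here is circular. (The paper's Remark \ref{c4-2c6} uses the $\tmf_1(3)$ map only to prove \emph{permanence} of $c_4\Delta^k$ and $2c_6\Delta^k$, i.e.\ the upper bound, not to rule out a lift of $c_6\Delta^k$.) Relatedly, even for the upper bound your argument only checks the $d_5$- and $d_7$-differentials; Bauer's computation of the descent spectral sequence at $p=2$ also has $d_9$, $d_{11}$, $d_{13}$, and $d_{23}$, and some of these are $\Delta$-power-supported. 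Showing that $c_4^i\Delta^k$ survives all of them requires tracking the full spectral sequence, not just the first two differentials. As it stands the proposal is a plausible narrative but not a substitute for the computation in \cite{bauer-tmf} that the paper cites.
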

\begin{proof}
    This is \cite[Proposition 4.6]{hopkins-icm}, proved in \cite{bauer-tmf}.
\end{proof}
\begin{proof}[Proof of Proposition \ref{generalization}]
    We have
    $$\Delta^n j_n(z) = \sum_{0\leq k\leq n} \alpha_k j(z)^k \Delta^n =
    \sum_{0\leq k\leq n} \alpha_k c_4^{3k} \Delta^{n-k},$$
    for some integers $\alpha_k$ (where $\alpha_n = 1$). By Theorem
    \ref{string-surj} and Theorem \ref{tmf-htpy}, it suffices to show that the
    constant term $\alpha_0$ of $j_n(z)$ (when expanded as a monic integral
    polynomial in $j(z)$) is a multiple of $24/\gcd(24,n)$. The $j$-function
    vanishes on a primitive third root of unity, so $\alpha_0 = j_n(\omega)$.
    Its generating function is
    $$\sum_{n\geq 0} j_n(\omega) q^n = -\frac{j'(z)}{j(z)} = \frac{c_6}{c_4},$$
    where $q = e^{2\pi i z}$ and $\omega$ is a primitive third root of unity.

    Let $m\geq 1$; we claim that the coefficients $a_{4,m}$ and $a_{6,m}$ of
    $q^m$ in the $q$-expansion for $c_4$ and $c_6$ (respectively) are divisible
    by $24/\gcd(24,m)$. Indeed, the expression for their $q$-expansion shows
    that $a_{4,n} = -240\sigma_3(n)$ and $a_{6,n} = 504\sigma_5(n)$, and both
    $240$ and $504$ are already divisible by $24$. Since the coefficient of
    $q^m$ in $1/c_4$ can be expressed as an integral linear combination of the
    $a_{4,k}$, it follows that the coefficient of $q^m$ for $m\geq 1$ in
    $c_6/c_4$ (which is $j_m(\omega)$) is divisible by $24$, and hence by
    $24/\gcd(24,m)$, as desired.
\end{proof}

\bibliographystyle{alpha}
\bibliography{main}
\end{document}